\DeclareMathOperator{\with}{\&}
\DeclareMathOperator{\thda}{{\rotatebox[origin=c]{-90}{$\twoheadrightarrow$}}}
\theoremstyle{plain}
\newtheorem{thm}{Theorem}[section]
\newtheorem{lem}[thm]{Lemma}
\newtheorem{prop}[thm]{Proposition}
\newtheorem{cor}[thm]{Corollary}
\theoremstyle{definition}
\newtheorem{defn}[thm]{Definition}
\newtheorem{ques}[thm]{Question}
\newtheorem{exmp}[thm]{Example}
\newtheorem{rem}[thm]{Remark}
\newtheorem*{con}{Convention}
\begin{document}
	
\def\oto{{\bfig\morphism<180,0>[\mkern-4mu`\mkern-4mu;]\place(86,0)[\circ]\efig}}
\def\rto{{\bfig\morphism<180,0>[\mkern-4mu`\mkern-4mu;]\place(78,0)[\mapstochar]\efig}}
	
\newcommand{\lam}{\lambda}
	\newcommand{\da}{\downarrow}
	\newcommand{\Da}{\Downarrow\!}
	\newcommand{\D}{\Delta}
	\newcommand{\ua}{\uparrow}
	\newcommand{\ra}{\rightarrow}
	\newcommand{\la}{\leftarrow}
	\newcommand{\lra}{\longrightarrow}
	\newcommand{\lla}{\longleftarrow}
	\newcommand{\up}{\upsilon}
	\newcommand{\Up}{\Upsilon}
	\newcommand{\ep}{\epsilon}
	\newcommand{\ga}{\gamma}
	\newcommand{\Ga}{\Gamma}
	\newcommand{\Lam}{\Lambda}
	\newcommand{\CF}{{\cal F}}
	\newcommand{\CG}{{\cal G}}
	\newcommand{\CH}{{\cal H}}
	\newcommand{\CN}{{\mathcal{N}}}
	\newcommand{\CB}{{\cal B}}
	\newcommand{\CI}{{\cal I}}
	\newcommand{\CT}{{\cal T}}
	\newcommand{\CS}{{\cal S}}
	\newcommand{\CP}{{\cal P}}
	\newcommand{\CQ}{\mathcal{Q}}
	
	\newcommand{\Om}{\Omega}
	\newcommand{\bv}{\bigvee}
	\newcommand{\bw}{\bigwedge}
	\newcommand{\dda}{\downdownarrows}
	\newcommand{\dia}{\otimessuit}
	\newcommand{\y}{\mathfrak{y}}
	\newcommand{\id}{{\rm id}}
	\newcommand{\sub}{{\rm sub}}
\newcommand{\sQ}{{\sf Q}}
\newcommand{\SFQ}{\sQ\text{-}{\sf SemFil}}
\newcommand{\CSFQ}{{\sf ConSF}}
\newcommand{\FQ}{\sQ\text{-}{\sf Fil}}
\newcommand{\CFQ}{{\sf ConFil}}
\newcommand{\SPF}{{\sf SPF}}
\newcommand{\BSF}{{\sf BSP}}
\newcommand{\BCF}{{\sf ConBSF}}
\newcommand{\topF}{\top\text{-}{\sf Fil}}
\newcommand{\sfm}{{\sf m}}
\newcommand{\sfe}{{\sf e}}
\newcommand{\sfn}{{\sf n}}
\newcommand{\sfd}{{\sf d}}		
	\numberwithin{equation}{section}
	\renewcommand{\theequation}{\thesection.\alph{equation}}
\newcommand{\pleq}{\sqsubseteq}
\newcommand{\QCat}{\sQ\text{-}{\sf Cat}}
\allowdisplaybreaks

\title{The  saturated prefilter monad
}
 \author{Hongliang Lai, Dexue Zhang, Gao Zhang \\ {\small School of Mathematics, Sichuan University, Chengdu 610064, China}\\  {\small hllai@scu.edu.cn, dxzhang@scu.edu.cn, gaozhang0810@hotmail.com } }

 \date{}

\maketitle

\begin{abstract} This paper considers some extensions of the notion of filter to the quantale-valued context, including saturated prefilter, $\top$-filter and bounded saturated prefilter. The question is
whether these constructions give rise to monads on the category of sets. It is shown that the answer depends on the structure of the quantale. Specifically,  if the quantale is the unit interval equipped with a continuous t-norm, then these constructions give rise to monads if and only if the implication operator corresponding to that t-norm is continuous at each point off the diagonal.

\noindent\textbf{Keywords}   Monad, Quantale, Continuous t-norm,  Saturated prefilter, Conical $\sQ$-semifilter, Bounded saturated prefilter,   Bounded $\sQ$-semifilter

\noindent \textbf{MSC(2020)}  18C15 18F60 18F75 54B30

\end{abstract}


\section{Introduction}

The filter monad on the category of sets  plays a crucial role in topology and order theory. The multiplication of the filter monad helps us to express iterative limits  in  topology; the Eilenberg-Moore algebras of the   filter  monad are the continuous lattices and hence the injective $T_0$ spaces; the Eilenberg-Moore algebras of the ultrafilter monad   are   the compact  Hausdorff spaces; and etc. There exists a large number of works that are related to applications of the filter monad in  topology  and order theory, for example,  the monographs \cite{Gierz2003,Monoidal top,Manes1976} and the articles \cite{Barr,Day,Escardo97,Escardo98,Manes1974,Manes2010,Wyler}.

The notion of filter  has been extended to the enriched (= quantale-valued in this paper) setting in different ways, resulting in   prefilters \cite{Lowen1979,Lowen82},   $\top$-filters \cite{Hoehle1982,Ho99,Ho2001},  functional ideals \cite{Lowen15,LOV2008,LV2008} (for Lawvere's quantale), and (various kinds of)  $\sQ$-filters  (or, fuzzy filters) \cite{EG92,Ga95,Ho99,Ho2001}. $\sQ$-filters give rise to a monad on the category of sets, it is indeed a submonad of the double $\sQ$-powerset monad,  see   \cite{Ga95,Ho2001}. It is shown in \cite{CVO} that the functional ideals   also give  rise to a monad on the category of sets. But, it is  still unknown whether so do  the prefilters.

In this paper, we consider the question whether saturated prefilters   \cite{Hoehle1982,Lowen82}, a special kind of prefilters, give rise to a monad. Since a monad consists of a functor and two natural transformations, the question will be stated precisely  in Section 3. The answer depends on the structure of the quantale.  In the case that  the quantale  $\sQ$ is the interval $[0,1]$ equipped with a continuous t-norm, a necessary and sufficient condition  is presented in Section 5. The key idea is to identify the  saturated prefilter functor with a subfunctor of the $\sQ$-semifilter functor, which is  a submonad of the double $\sQ$-powerset monad. In Section 6 and Section 7, the same technique is applied to study the  $\top$-filter functor and the bounded saturated prefilter functor,  respectively.

\section{Preliminaries}

A complete lattice $L$ is \emph{meet continuous} if for all $p\in L$, the map $p\wedge- : L\lra L$ preserves directed joins; that is,  $$p\wedge \bv D=\bv_{d\in D}(p\wedge d)$$ for each directed set $D\subseteq L$.

In a complete lattice $L$, $x$ is way below $y$,  in symbols  $x\ll y$, if for every directed set $D\subseteq L$, $y\leq \bv D$ always implies that $x\leq d$ for some $d\in D$. It is clear that for all $p\in L$, $\thda  p=\{x\in L\mid  x\ll p\}$ is a directed subset of $L$. A complete lattice $L$ is \emph{continuous} if  $p=\bv\thda p$ for all $p\in L$.  It is known that each continuous lattice is meet continuous  \cite{Gierz2003}.

In this paper, by a quantale we mean a \emph{commutative  unital  quantale}  in the sense of \cite{Rosenthal1990}. In the language of category theory, a quantale is a small, complete and symmetric monoidal closed category \cite{Borceux1994,Kelly}. Explicitly, a  quantale
\[\sQ=(\sQ,\with,k)\]
is a commutative monoid with $k$ being the unit, such that the underlying set $\sQ$ is a complete lattice with a bottom element $0$ and a top element $1$,   and that the multiplication $\with$ distributes over arbitrary joins. The  multiplication $\&$ determines a binary operator $\ra$,   sometimes called the implication operator of $\&$, via the adjoint property:
\[p\with q\leq r\iff q\leq p\ra r.\]

Given a   quantale \((\sQ,\with,k)\), we say that
\begin{itemize}   \item $\sQ$ is \emph{integral}, if the unit $k$ coincides with the top element of the complete lattice $\sQ$;
\item $\sQ$ is \emph{meet continuous}, if the complete lattice $\sQ$ is meet continuous; and
\item $\sQ$ is \emph{continuous}, if the complete lattice $\sQ$ is   continuous. \end{itemize}

Lawvere's quantale $([0,\infty]^{\rm op},+,0)$  \cite{Lawvere1973} is integral and continuous. Quantales obtained by endowing the unit interval $[0,1]$ with a continuous t-norm are of particular interest in this paper.
A continuous t-norm  \cite{Klement2000} is, actually,  a continuous map $\with:[0,1]^2\lra[0,1]$  that makes $([0,1],\with ,1)$ into a quantale. Given a continuous t-norm $\with $, the quantale $\sQ=([0,1],\&,1)$  is integral and continuous. The way below relation in $[0,1]$ is  as follows:    $x\ll y$  if either $x=0$ or $x<y$. Such quantales play a decisive role in the BL-logic of H\'{a}jek \cite{Ha98}.

\begin{exmp}Some basic   continuous t-norms and their implication operators:
\begin{enumerate}[label={\rm(\arabic*)}] 
\item  The G\"{o}del t-norm: \[ x\with y= \min\{x,y\}; \quad x\ra y=\begin{cases}
		1,&x\leq y,\\
		y,&x>y.
		\end{cases} \] The implication  operator $\ra$ of the G\"{o}del t-norm is  continuous  except at   $(x,x)$, $x<1$.

\item  The product t-norm:  \[ x\with_P y=xy; \quad x\ra y=\begin{cases}
		1,&x\leq y,\\
		y/x,&x>y.
		\end{cases} \] The implication operator  $\ra$
of the product t-norm is  continuous  except at   $(0,0)$. The quantale $([0,1], \with_P, 1)$ is  isomorphic to Lawvere's quantale $([0,\infty]^{\rm op},+,0)$.

\item  The {\L}ukasiewicz t-norm:\[ x\with_{\L} y=\max\{0,x+y-1\}; \quad x\ra y=\min\{1-x+y,1\}. \] The implication operator  $\ra$
    of the {\L}ukasiewicz t-norm
    is   continuous on $[0,1]^2$.
	\end{enumerate}
\end{exmp}

Let $\with $ be a continuous t-norm. An element $p\in [0,1]$ is   \emph{idempotent}  if $p\with p=p$.

\begin{prop}{\rm(\cite[Proposition 2.3]{Klement2000})} \label{idempotent}
Let $\&$ be a continuous t-norm on $[0,1]$ and $p$ be an idempotent element of $\&$. Then $x\with y= \min\{x, y\} $ whenever $x\leq p\leq y$.
 \end{prop}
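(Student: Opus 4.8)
The statement asserts $x\with y=\min\{x,y\}$ for $x\le p\le y$; since $x\le y$ here we have $\min\{x,y\}=x$, so the goal is really $x\with y=x$. My plan is to first reduce everything to the single identity $x\with p=x$ for all $x\le p$. Indeed, integrality gives $x\with y\le x\with 1=x$, and if we knew that $x\with p=x$, then monotonicity in the second variable (using $y\ge p$) would yield $x=x\with p\le x\with y$, forcing $x\with y=x$. So the whole proposition collapses to proving $x\with p=x$ whenever $x\le p$.

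To prove this, fix the idempotent $p$ and study the unary map $g\colon[0,p]\to[0,1]$ defined by $g(t)=t\with p$. It is continuous (as $\with$ is a continuous t-norm) and monotone increasing, with $g(0)=0\with p=0$ and $g(p)=p\with p=p$. The decisive observation is that $g$ is idempotent as a self-map: using associativity together with $p\with p=p$,
\[ g(g(t))=(t\with p)\with p=t\with(p\with p)=t\with p=g(t). \]
In particular every element of the image of $g$ is a fixed point of $g$, since $c=g(w)$ gives $g(c)=g(g(w))=g(w)=c$.

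It remains to show that $g$ fixes every point of $[0,p]$, which is where continuity does the real work. Given any $c\in[0,p]$, continuity of $g$ with $g(0)=0\le c\le p=g(p)$ lets the intermediate value theorem produce some $w\in[0,p]$ with $g(w)=c$; thus $c$ lies in the image of $g$ and is therefore fixed, i.e.\ $c\with p=c$. As $c\in[0,p]$ was arbitrary, $x\with p=x$ for all $x\le p$, and the reduction in the first paragraph then finishes the argument. The only subtle point is the interplay in this last step: associativity and idempotency force the image of $g$ to consist of fixed points, while continuity (via the intermediate value theorem) forces that image to exhaust all of $[0,p]$. Neither ingredient alone suffices, and I expect combining them to be the crux of the proof.
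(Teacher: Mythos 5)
Your proof is correct, and it is essentially the standard argument for this fact: the paper itself gives no proof, citing \cite[Proposition 2.3]{Klement2000}, and the proof there proceeds exactly as yours does --- reduce to showing $x\with p=x$ for $x\leq p$, then combine the intermediate value theorem (continuity, $0\with p=0$, $p\with p=p$) with associativity and idempotency to conclude that every $c\in[0,p]$ is a fixed point of $t\mapsto t\with p$. Nothing further is needed.
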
 It follows immediately  that $y\ra x=x$ whenever  $x< p\leq y$ for some idempotent   $p$. Another consequence of  Proposition \ref{idempotent} is that for any idempotent elements $p, q$    with $p<q$,  the restriction of $\with $ to $[p,q]$, which is also denoted by $\with$,  makes $[p,q]$ into a commutative quantale with $q$ being the unit element.    The following  theorem, known as the \emph{ordinal sum decomposition theorem},  plays a prominent role in the theory of continuous t-norms.

\begin{thm} {\rm(\cite{Klement2000,Mostert1957})}
\label{ordinal sum} Let $\with $ be a continuous t-norm. If $a\in [0,1]$ is non-idempotent, then there exist idempotent elements $a^{-}, a^{+}\in [0,1]$ such that $a^-<a<a^+$ and that the quantale  $([a^{-},a^{+}],\with ,a^{+})$ is either isomorphic to  $([0,1],\with_{\L},1)$ or to $([0,1],\with_{P},1)$. Conversely, for each   set of disjoint open intervals $\{(a_n,b_n)\}_n$ of $[0,1]$, the binary operator \[x\with y\coloneqq\begin{cases}a_n+(b_n-a_n)T_n\Big(\displaystyle{\frac{x-a_n}{b_n-a_n}, \frac{y-a_n}{b_n-a_n}}\Big), & (x,y)\in [a_n,b_n]^2,\\
\min\{x,y\}, & {\rm otherwise} \end{cases}\] is a continuous t-norm, where each $T_n$ is a continuous t-norm on $[0,1]$. \end{thm}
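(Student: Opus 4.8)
The plan is to handle the two directions separately, with the forward (decomposition) direction being by far the more substantial.

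First I would analyze the forward direction by studying the idempotents. Set $E=\{p\in[0,1]\mid p\with p=p\}$. Since $\with$ is continuous, the map $p\mapsto p\with p$ is continuous, so $E$ is the preimage of the diagonal and hence closed; it always contains $0$ and $1$. As $a$ is non-idempotent, $a$ lies in the open set $[0,1]\setminus E$, and therefore in a maximal open subinterval $(a^-,a^+)$ of that complement, with $a^-<a<a^+$ and $a^-,a^+\in E$ because the endpoints belong to the closed set $E$. By Proposition \ref{idempotent} and the remark following it, the restriction of $\with$ to $[a^-,a^+]$ makes this interval into a commutative quantale with unit $a^+$; note that $[a^-,a^+]$ is closed under $\with$ since $a^-=a^-\with a^-\le x\with y\le\min\{x,y\}\le a^+$ for $x,y\in[a^-,a^+]$. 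Rescaling affinely by $r(x)=(x-a^-)/(a^+-a^-)$ onto $[0,1]$ produces a continuous t-norm $T$ whose only idempotents are $0$ and $1$, for an idempotent of $\with$ strictly inside $(a^-,a^+)$ would contradict maximality of the interval. Thus the problem reduces to classifying the continuous t-norms with no interior idempotents, i.e.\ the Archimedean ones.

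The heart of the argument is then the representation theorem of Mostert--Shields \cite{Mostert1957}: every continuous Archimedean t-norm $T$ admits an additive generator, namely a continuous strictly decreasing $t\colon[0,1]\lra[0,\infty]$ with $t(1)=0$ such that $T(x,y)=t^{(-1)}\big(\min\{t(x)+t(y),\,t(0)\}\big)$, where $t^{(-1)}$ is the pseudo-inverse. This is where the real work lies: one views $([0,1],T)$ as a compact, connected, totally ordered topological abelian monoid with zero $0$, identity $1$, and no interior idempotents, and invokes the structure theory of topological semigroups on a real interval (equivalently Acz\'el's analysis of the associativity functional equation for continuous, cancellative ordered operations) to produce $t$. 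I expect the existence of this generator to be the main obstacle, since it is genuinely a theorem in topological algebra rather than a routine computation. Once $t$ is in hand the dichotomy is immediate: if $t(0)=\infty$ the t-norm is strict and $\phi(x)=e^{-t(x)}$ is an order isomorphism onto $([0,1],\with_P,1)$, whereas if $t(0)<\infty$ it is nilpotent and $\phi(x)=1-t(x)/t(0)$ is an order isomorphism onto $([0,1],\with_{\L},1)$; in both cases a direct substitution confirms $\phi(T(x,y))=\phi(x)\with\phi(y)$. Composing $\phi$ with the affine rescaling $r$ yields the asserted isomorphism of $([a^-,a^+],\with,a^+)$ with one of the two standard quantales.

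For the converse I would verify directly that the piecewise formula defines a continuous t-norm. Commutativity and the unit law $1\with y=y$ are immediate, the latter because $1$ lies outside every $(a_n,b_n)$ so the $\min$ clause applies; monotonicity and the $\with$-closedness of each block $[a_n,b_n]$ follow from the monotonicity and normalization of the $T_n$. The only points needing care are associativity and continuity. For associativity one checks cases according to how the three arguments are distributed among the blocks: whenever two of them fall in distinct blocks or outside all blocks, both sides collapse to the same nested $\min$ by the computation underlying Proposition \ref{idempotent}, while inside a single block associativity is inherited from $T_n$. For continuity it suffices to examine the boundary points $x=a_n$ and $x=b_n$; there the rescaled $T_n$ agrees with $\min$ because $T_n$ has $0$ and $1$ as its zero and unit (so $T_n(0,v)=0$ and $T_n(1,v)=v$), hence the block formula and the $\min$ clause coincide on the overlapping idempotent endpoints, and since each $T_n$ is continuous the glued map is continuous on each closed block and therefore globally.
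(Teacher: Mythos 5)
The paper offers no proof of Theorem \ref{ordinal sum} at all: the result is quoted from the literature, with everything deferred to \cite{Klement2000,Mostert1957}. So there is no in-paper argument to compare yours against line by line; what can be judged is whether your outline correctly reproduces the standard proof, and it essentially does. Your reduction to the Archimedean case is complete and correct: the idempotent set $E$ is closed because it is the equalizer of the continuous map $p\mapsto p\with p$ and the identity, the maximal open interval $(a^-,a^+)$ of $[0,1]\setminus E$ containing $a$ has idempotent endpoints, $[a^-,a^+]$ is closed under $\with$ (using $a^-\with a^-=a^-$ and $x\with y\leq\min\{x,y\}$), and the affine rescaling produces a continuous t-norm on $[0,1]$ whose only idempotents are $0$ and $1$. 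The dichotomy you draw from an additive generator $t$ is also correct: when $t(0)=\infty$ the map $\phi(x)=e^{-t(x)}$ is an isomorphism onto $([0,1],\with_P,1)$, and when $t(0)<\infty$ the map $\phi(x)=1-t(x)/t(0)$ is an isomorphism onto $([0,1],\with_{\L},1)$. The one place where your argument is not self-contained is exactly the one you flag: the existence of the additive generator for a continuous Archimedean t-norm. That is the Mostert--Shields structure theorem, and it is the real mathematical content of the cited result; citing it is defensible here, since it is the very black box the paper itself invokes, but you should be aware that your argument is then a reduction of Theorem \ref{ordinal sum} to its own core rather than an independent proof of it.

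Two small repairs in the converse direction. First, the unit law $1\with y=y$ does not always follow from ``$1$ lies outside every $(a_n,b_n)$, so the $\min$ clause applies'': one may have $b_n=1$, in which case $(1,y)\in[a_n,b_n]^2$ and the block clause applies; the law then follows from $T_n(1,v)=v$, the same boundary compatibility you invoke for continuity. Second, your associativity sketch should state explicitly that at shared endpoints the block formula agrees with $\min$ (again via $T_n(1,v)=v$ and $T_n(0,v)=0$), since otherwise the case analysis is ambiguous when an argument equals some $a_n$ or $b_n$; this is where all the routine work in the verification actually sits.
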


Let \(\sQ=(\sQ,\with,k)\) be a  quantale.  A \emph{$\sQ$-category} \cite{Monoidal top,Lawvere1973,Wagner97} (a.k.a $\sQ$-ordered set, see e.g. \cite{Belo2004,LZ2020}) consists of a set $A$ and a map $a:A\times A\to \sQ$  such that
 \[k\leq a(x,x)\quad \mbox{and}\quad
a(y,z)\with  a(x,y)\leq a(x,z)\]
 for all $x,y,z\in A$. It is customary to write $A$ for the pair $(A, a)$ and $A(x,y)$ for $a(x,y)$ if no confusion would arise.

A \emph{$\sQ$-functor} $f: A\to B$ between $\sQ$-categories is a map $f:A\to B$ such that $$A(x,y)\leq B(f(x),f(y))$$
for all $x,y\in A$.

\begin{exmp}   \label{d_L} For all $p,q\in\sQ$, let \[d_L(p,q)=p\ra q.\] Then $(\sQ,d_L)$ is a   $\sQ$-category. Generally, for each set $X$, $(\sQ^X,\sub_X)$ is a  $\sQ$-category, where for all $\lam,\mu\in\sQ^X$,  \[ \sub_X(\lam,\mu)=\bigwedge_{x\in X}\lam(x)\ra\mu(x).   \]

For each map $f: X\lra Y$   and each $\lam\in\sQ^X$, define $f(\lam)\in\sQ^Y$ by  \[f(\lam)(y)=\bv_{f(x)=y} \lam(x).\]Then,  $f:(\sQ^X,\sub_X)\lra(\sQ^Y,\sub_Y)$ and  $-\circ f:(\sQ^Y,\sub_Y)\lra(\sQ^X,\sub_X)$ are $\sQ$-functors such that  \[\sub_Y(f(\lam),\mu)= \sub_X(\lam,\mu\circ f)\] for all $\lam\in\sQ^X$ and $\mu\in\sQ^Y$. This  fact is an instance of enriched Kan extensions  \cite{Borceux1994,Kelly,Lawvere1973}. \end{exmp}

\section{The question}

A proper filter on a set $X$ is an upper set of $(\CP(X),\subseteq)$  that is closed under finite meets and does not contain the empty set $\emptyset$.
The notion of filter has been extended to the quantale-valued setting in different ways: prefilter  and $\sQ$-semifilter.

\begin{defn} (Lowen, \cite{Lowen1979}) A prefilter $F$ on a   set $X$ is a   subset of $\sQ^X$   and such that \begin{enumerate}[label=(\roman*)] \item $k_X\in F$, where $k_X$ is the constant map $X\lra\sQ$ with value $k$; \item if $\lam,\mu\in F$ then $\lam\wedge\mu\in F$; \item if $\lam\in F$ and $\lam\leq\mu$  then $\mu\in F$. \end{enumerate}\end{defn}

It should be warned that in the above definition, a prefilter $F$ is allowed to contain the constant map $0_X$, in which case $F=\sQ^X$; but, in \cite{Lowen1979} a prefilter is required not to contain $0_X$.

\begin{defn} (H\"{o}hle, \cite[Definition 1.5]{Hoehle1982}) Let $F$ be a prefilter   on a  set $X$. Then we say  that \begin{enumerate}[label=(\arabic*)] \item
$F$ is saturated if $\lam\in F$ whenever $\bv_{\mu\in F}\sub_X(\mu,\lam)\geq k$.  \item $F$ is a $\top$-prefilter if it is saturated and  $\bv_{x\in X}\lam(x)\geq k$ for all $\lam\in F$. \end{enumerate}
\end{defn}

Saturated prefilters on $X$ are closed with respect to intersection. For each prefilter $F$, the smallest saturated prefilter containing $F$ is called the \emph{saturation} of $F$.

For each set $X$, let $\SPF(X)$  denote the set of all saturated prefilters  on $X$. For each map $f:X\lra Y$ and each saturated prefilter $F$ on $X$, let \[f(F)=\{\lam\in\sQ^Y\mid \lam\circ f\in F\}.\] Then $f(F)$ is a saturated prefilter on $Y$. That $f(F)$ is a prefilter is clear, to see that it is saturated, suppose that\[\bv_{\lam\circ f\in F} \sub_Y(\lam,\mu) \geq k.\] Then \[\bv_{\lam\circ f\in F}\sub_X(\lam\circ f,\mu\circ f)\geq \bv_{\lam\circ f\in F}\sub_Y(\lam,\mu)\geq k,\] which implies that $\mu\circ f\in F$, hence $\mu\in f(F)$. In this way,  we obtain a functor \[\SPF:{\sf Set}\lra{\sf Set}.\]

For each map $f:X\lra Y$ and each $\top$-filter $F$ on $X$, $f(F)$ is clearly a $\top$-filter on $Y$. So, assigning to each set $X$   the set $\topF(X)$ of all $\top$-filters on $X$ defines a functor \[\topF:{\sf Set}\lra{\sf Set},\] which is a subfunctor of $\SPF$.

Let $X$ be a set. For each $x\in X$, let  \begin{equation}\label{defn of d}\mathfrak{d}_X(x) =\{\lam\in\sQ^X\mid \lam(x)\geq k\};\end{equation}  for each saturated prefilter $\CF$ on $\SPF(X)$, let
  \begin{equation}\label{defn of n}\mathfrak{n}_X(\CF)= \{\lam\in\sQ^X\mid  \widetilde{\lam} \in\CF\},\end{equation} where,  \[\widetilde{\lam}(F)=\bv_{\mu\in F}\sub_X(\mu,\lam)  \] for every saturated prefilter $F$ on $X$.

As we shall see in Proposition \ref{natural of d and n}, $\mathfrak{d}=\{\mathfrak{d}_X\}_X$ is a natural transformation   ${\rm id}\lra\SPF$  and $\mathfrak{n}=\{\mathfrak{n}_X\}_X$ is a natural transformation   $\SPF^2\lra\SPF$. The  formulas (\ref{defn of d}) and (\ref{defn of n}) appeared  in Yue and Fang \cite{YF2020} for   $\top$-filters (see Remark \ref{6.4} below). Proposition \ref{natural of d and n} shows that these formulas are obtained in a  \emph{natural} way.
Now we   state the main question of this paper.

\begin{ques}
When is the triple $(\SPF, \mathfrak{n},\mathfrak{d})$    a monad?\end{ques}

\section{The  $\sQ$-semifilter monad}
The following definition is a slight modification of that of  \emph{$\sQ$-filter}  in \cite{Ga95,Ho99,HS99}.
\begin{defn} \label{Q-filter} A $\sQ$-semifilter   on a  set $X$ is a  map $\mathfrak{F}: \sQ^X\lra \sQ$ subject to the following conditions:  for all    $\lam,\mu\in \sQ^X$, \begin{enumerate}[label=(F\arabic*)]
\item \label{FF1} $\mathfrak{F}(k_X)\geq k$; \item \label{FF2} $\mathfrak{F}(\lam)\wedge\mathfrak{F}(\mu)\leq\mathfrak{F}(\lam\wedge\mu)$; \item \label{FF3} $\sub_X(\lam,\mu)\leq\mathfrak{F}(\lam)\ra \mathfrak{F}(\mu)$; in other words, $\mathfrak{F}:(\sQ^X,\sub_X)\lra(\sQ,d_L)$ is a $\sQ$-functor. \end{enumerate}

A $\sQ$-semifilter   $\mathfrak{F}$  is called a $\sQ$-filter if it satisfies moreover \begin{enumerate} \item[(F4)] $\mathfrak{F}(p_X)\leq p$  for all $p\in\sQ$, where $p_X$ is the constant map $X\lra\sQ$ with value $p$.\end{enumerate}\end{defn}

Condition (F3) is equivalent to  \begin{enumerate} \item[(F3')] $\mathfrak{F}$ preserves order and $p\with\mathfrak{F}(\lam)\leq \mathfrak{F}(p\with\lam)$ for all $p\in\sQ$ and $\lam\in\sQ^X$. \end{enumerate}  And, (F3)  implies that the inequalities in (F2) and (F4) are actually  equalities.

For each set $X$, let $\SFQ(X)$   be the set of all $\sQ$-semifilters on $X$. For each map $f:X\lra Y$ and each $\mathfrak{F}\in\SFQ(X)$, define \[f(\mathfrak{F}):\sQ^Y\lra\sQ\]   by \[f(\mathfrak{F})(\mu)=\mathfrak{F}(\mu\circ f).\] Then $f(\mathfrak{F})$ is a $\sQ$-semifilter on $Y$. Therefore, we obtain a functor \[\SFQ:{\sf Set}\lra{\sf Set}.\]
Likewise, assigning to each set $X$ the set $\FQ(X)$   of all $\sQ$-filters on $X$ gives a functor \[\FQ:{\sf Set}\lra{\sf Set},\] which is a subfunctor of $\SFQ$.

For each set $X$ and each $x$ of $X$, the map \[\sfe_X(x):\sQ^X\lra\sQ, \quad \sfe_X(x)(\lam)=\lam(x)\]  is  a $\sQ$-filter. And, \[\sfe=\{\sfe_X\}_X\] is a natural transformation   ${\rm id}\lra\SFQ$.

For each $\sQ$-semifilter $\mathbb{F}$ on   $\SFQ(X)$, define \[\sfm_X(\mathbb{F}):\sQ^X\lra\sQ\] by \[\sfm_X(\mathbb{F})(\lam)=\mathbb{F}(\widehat{\lam}),\] where $\widehat{\lam}:\SFQ(X)\lra\sQ$ is   given by \[\widehat{\lam}(\mathfrak{G})= \mathfrak{G}(\lam). \] Then $\sfm_X(\mathbb{F})$ is a $\sQ$-semifilter on $X$. That $\sfm_X(\mathbb{F})$ satisfies \ref{FF1} and \ref{FF2} is obvious. To see that it satisfies \ref{FF3}, let $\lam,\mu\in\sQ^X$. Then \begin{align*}\sub_X(\lam,\mu)&\leq\bw_{\mathfrak{G}\in\SFQ(X)}(\mathfrak{G}(\lam)\ra \mathfrak{G}(\mu))\\ &=\sub_{\SFQ(X)}(\widehat{\lam},\widehat{\mu})\\ &\leq \mathbb{F}(\widehat{\lam})\ra\mathbb{F}(\widehat{\mu}) \quad~~(\mathbb{F}~\text{is a $\sQ$-semifilter})\\ &= \sfm_X(\mathbb{F})(\lam)\ra\sfm_X(\mathbb{F})(\mu).\end{align*} The $\sQ$-semifilter $\sfm_X(\mathbb{F})$ is called the \emph{diagonal $\sQ$-semifilter} (or the \emph{Kowalsky sum})  of $\mathbb{F}$.


\begin{prop}{\rm(C.f. \cite[Theorem 2.4.2.2]{Ho2001})} \label{Q-semifilter monad} The class of maps  \[\sfm=\{\sfm_X\}_X\] is a natural transformation   $\SFQ^2\lra\SFQ$ and  the triple $(\SFQ,\sfm,\sfe)$ is a monad on the category of sets. \end{prop}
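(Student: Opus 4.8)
The plan is to verify everything by direct computation, reducing the whole argument to the bookkeeping of the ``transpose'' operator $\lam\mapsto\widehat\lam$. Three facts are already in hand from the excerpt: each $\sfm_X(\mathbb{F})$ is a genuine $\sQ$-semifilter, $\sfe$ is natural, and $\sfe_X(x)$ is a $\sQ$-filter. So what remains is (i) naturality of $\sfm$ and (ii) the three monad laws. First I would record the key identities that carry the proof. For $f:X\lra Y$ and $\mu\in\sQ^Y$, unwinding the pushforward shows $\widehat\mu\circ\SFQ(f)=\widehat{\mu\circ f}$ as maps $\SFQ(X)\lra\sQ$, since both send $\mathfrak{G}\in\SFQ(X)$ to $\mathfrak{G}(\mu\circ f)$. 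Similarly $\widehat\lam\circ\sfe_X=\lam$, because $\widehat\lam(\sfe_X(x))=\sfe_X(x)(\lam)=\lam(x)$; and $\widehat\lam\circ\sfm_X=\widehat{\widehat\lam}$ as maps $\SFQ^2(X)\lra\sQ$, because $\widehat\lam(\sfm_X(\mathbb{G}))=\sfm_X(\mathbb{G})(\lam)=\mathbb{G}(\widehat\lam)=\widehat{\widehat\lam}(\mathbb{G})$, where $\widehat{\widehat\lam}$ is the transpose of $\widehat\lam\in\sQ^{\SFQ(X)}$ living over $\SFQ^2(X)$.

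Naturality of $\sfm$ is then a one-line diagram chase. Evaluating both composites on $\mathbb{F}\in\SFQ^2(X)$ and $\mu\in\sQ^Y$, one side gives $\SFQ(f)(\sfm_X(\mathbb{F}))(\mu)=\sfm_X(\mathbb{F})(\mu\circ f)=\mathbb{F}(\widehat{\mu\circ f})$, while the other gives $\sfm_Y(\SFQ^2(f)(\mathbb{F}))(\mu)=\SFQ^2(f)(\mathbb{F})(\widehat\mu)=\mathbb{F}(\widehat\mu\circ\SFQ(f))$; these agree by the first identity. The two unit laws go the same way. For $\sfm_X\circ\sfe_{\SFQ(X)}$ I evaluate $\sfm_X(\sfe_{\SFQ(X)}(\mathfrak{F}))(\lam)=\sfe_{\SFQ(X)}(\mathfrak{F})(\widehat\lam)=\widehat\lam(\mathfrak{F})=\mathfrak{F}(\lam)$ straight from the definition of $\sfe$; for $\sfm_X\circ\SFQ(\sfe_X)$ I get $\mathfrak{F}(\widehat\lam\circ\sfe_X)=\mathfrak{F}(\lam)$ using $\widehat\lam\circ\sfe_X=\lam$. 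Thus both composites equal $\id_{\SFQ(X)}$.

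For associativity I compare $\sfm_X\circ\sfm_{\SFQ(X)}$ and $\sfm_X\circ\SFQ(\sfm_X)$ on $\mathbb{W}\in\SFQ^3(X)$ and $\lam\in\sQ^X$. The first composite unwinds to $\sfm_{\SFQ(X)}(\mathbb{W})(\widehat\lam)=\mathbb{W}(\widehat{\widehat\lam})$, and the second to $\SFQ(\sfm_X)(\mathbb{W})(\widehat\lam)=\mathbb{W}(\widehat\lam\circ\sfm_X)$; these coincide by the third identity $\widehat\lam\circ\sfm_X=\widehat{\widehat\lam}$. I expect the main (and essentially only) obstacle to be organisational rather than conceptual: one must keep scrupulous track of which level of $\SFQ$ each semifilter and each transpose lives over, since $\sfm_{\SFQ(X)}$, $\SFQ(\sfm_X)$, and the iterated hats $\widehat\lam$, $\widehat{\widehat\lam}$ all look alike on the page but act on different powers $\SFQ^k(X)$. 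Once the three transpose identities above are isolated, every law collapses to a single substitution, so I would lead with those identities and treat the four verifications as immediate corollaries.
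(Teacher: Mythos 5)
Your argument is correct: the three transpose identities hold exactly as stated, and naturality of $\sfm$, the two unit laws, and associativity do each collapse to a single substitution once those identities are isolated (the preliminary facts you quote --- that $\sfm_X(\mathbb{F})\in\SFQ(X)$, that $\sfe$ is natural, and that $\sfe_X(x)\in\SFQ(X)$ --- are indeed established in the paper before the proposition). But this is a genuinely different route from the paper's. The paper does no law-checking at all: it invokes the double $\sQ$-powerset monad $(\mathbf{P},\sfm,\sfe)$, $\mathbf{P}(X)=\sQ^{\sQ^X}$, which is a monad for free because it arises from the adjunction $\CP_{\sQ}^{\rm op}\dashv\CP_{\sQ}$, and then applies the general submonad criterion stated right after the proposition: a subfunctor $T$ of a monad $S$ that contains all units and is closed under the multiplication inherits the monad structure, with the inclusion a monad morphism. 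The computations carried out before the proposition (that $\sfe_X(x)$ and $\sfm_X(\mathbb{F})$ are $\sQ$-semifilters) are precisely the two hypotheses of that criterion, so the conclusion follows at once. What the paper's route buys is reusability: the same criterion immediately handles $\FQ$, and the rest of the paper turns on whether $\CSFQ$, $\CFQ$ and their bounded variants satisfy the closure-under-multiplication hypothesis (Theorems \ref{main1}, \ref{main2} and \ref{main3}), so the submonad viewpoint is the paper's working tool rather than a one-off proof device. What your route buys is self-containedness: no adjunction, no submonad lemma, only bookkeeping of the hats. It is also worth observing that your verifications never invoke the semifilter axioms (F1)--(F3) --- they are purely formal --- so your argument, read verbatim with $\mathbf{P}$ in place of $\SFQ$, is exactly a direct proof that the double $\sQ$-powerset construction is a monad; in effect you have re-proved by hand the ingredient the paper imports from the adjunction.
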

\begin{proof}It is known that the filter monad arises from an
adjunction between the categories of sets and  semilattices, see e.g. Examples II.3.1.1\thinspace(5) in \cite{Monoidal top}. Likewise,  $(\SFQ,\sfm,\sfe)$   arises from  a natural adjunction between the category of sets and a category of certain $\sQ$-categories. 
We include below a proof of the conclusion   by displaying  $(\SFQ,\sfm,\sfe)$ as a \emph{submonad of  the double $\sQ$-powerset monad}, since the idea will be applied to some other functors in this paper. \end{proof}

Let $(S, \mu,\eta)$ be a monad on the category of sets; and let $T$ be a subfunctor of $S$. Suppose that $T$ satisfies the following conditions: \begin{enumerate}[label=(\roman*)] \item for each set $X$ and each $x\in X$, $\eta_X(x)\in T(X)$. So $\eta$ is also a natural transformation from the identity  functor to $T$. \item $T$ is closed under multiplication in the sense that for each set $X$ and each $\mathbb{H}\in TT(X)$, \[\mu_X\circ(\mathfrak{i}*\mathfrak{i})_X(\mathbb{H})\in T(X),\] where $\mathfrak{i}$ is the inclusion transformation of $T$ to $ S$ and $\mathfrak{i}*\mathfrak{i}$ stands for the horizontal composite of $\mathfrak{i}$ with itself.
So  $\mu$ determines a natural transformation $T^2\lra T$, which is also denoted by $\mu$. \end{enumerate} Then,   $(T,\mu,\eta)$ is a monad and the inclusion transformation $\mathfrak{i}:T\lra S$ is a monad morphism. We call $(T,\mu,\eta)$ (or  the functor $T$, for simplicity) a submonad of $(S,\mu,\eta)$ \cite{Manes2003,Manes2010}.

Given a set $B$,   the contravariant functor \[\CP_B:{\sf Set}^{\rm op}\lra{\sf Set}, \] which sends each set $X$ to (the $B$-powerset) $B^X$, is    right adjoint to its opposite \[\CP_B^{\rm op}:{\sf Set}\lra{\sf Set}^{\rm op}.\] The   monad $(\mathbf{P}, \sfm,\sfe)$ arising from this adjunction is called the \emph{double $B$-powerset monad} \cite[Remark 1.2.7]{Ho2001}. Explicitly, \[\mathbf{P}(X)=\CP_B\circ \CP_B^{\rm op}(X)=B^{B^X};\] the unit $\sfe_X:X\lra \mathbf{P}(X)$ is given by \[\sfe_X(x)(\lam)=\lam(x) \]for all $x\in X$  and $\lam\in B^X$;  and the multiplication $\sfm_X:\mathbf{P}\mathbf{P}(X)\lra\mathbf{P}(X)$ is given by \[\sfm_X(\mathbb{H})(\lam)= \mathbb{H}(\widehat{\lam}) \] for all $\mathbb{H}: B^{\mathbf{P}(X)}\lra B$ and $\lam\in B^X$ with $\widehat{\lam}:\mathbf{P}(X)\lra B$   given by \[\widehat{\lam}(\mathfrak{A})= \mathfrak{A}(\lam)\] for all $\mathfrak{A}\in  \mathbf{P}(X)$.

By the paragraph before Proposition \ref{Q-semifilter monad}, one immediately sees that $ \SFQ$  is  a submonad  of   $(\mathbf{P}, \sfm,\sfe)$ and    $\FQ$ is a submonad of $(\SFQ,\sfm,\sfe)$.

\begin{rem} (C.f. Examples II.3.1.1  in \cite{Monoidal top})
The construction of the submonads \[(\SFQ,\sfm,\sfe)\quad \text{and}\quad  (\FQ,\sfm,\sfe)\] is typical. When the set $B$ comes with some structures, we may be able to formulate some submonads of the double $B$-powerset monad \[(\mathbf{P}, \sfm,\sfe)\]  by aid of the structures on $B$. The (proper) filter monad and the ultrafilter monad are   examples of this construction. To see this, let $B=\{0,1\}$, viewed as a lattice with $0<1$. Then, assigning to each set $X$ the set   \[\{\lam:2^X\lra 2\mid \lam(X)=1, \lam(\emptyset)=0, \lam(A\cap B)=\lam(A)\wedge\lam(B)\}\] defines a submonad  of $(\mathbf{P}, \sfm,\sfe)$ --- the proper filter monad; assigning to each set $X$  the set  \[ \{\lam:2^X\lra 2\mid \lam~\text{is a lattice homomorphism}\}\] defines a submonad  of $(\mathbf{P}, \sfm,\sfe)$ --- the    ultrafilter monad. Furthermore, assigning to each set $X$  the set   \[{\sf P}^+(X)\coloneqq\{\lam:2^X\lra 2\mid \lam~\text{is a right adjoint}\}\] defines a submonad  of $(\mathbf{P}, \sfm,\sfe)$, which is essentially the covariant powerset monad.
\end{rem}

\section{The saturated prefilter monad}

For a $\sQ$-semifilter $\mathfrak{F}$ on $X$, the set \begin{equation*}\Gamma_X(\mathfrak{F})=\{\lam\in \sQ^X\mid \mathfrak{F}(\lam)\geq k\}\end{equation*} is   a saturated prefilter on $X$. To see that $\Gamma_X(\mathfrak{F})$ is saturated, suppose that \[\bv_{\lam\in\Gamma_X(\mathfrak{F})} \sub_X(\lam,\mu)\geq k. \] Then, \[\mathfrak{F}(\mu)\geq\bv_{\lam\in\Gamma_X(\mathfrak{F})}\sub_X(\lam,\mu)\with \mathfrak{F}(\lam)= \bv_{\lam\in\Gamma_X(\mathfrak{F})}\sub_X(\lam,\mu)\geq k,\] so $\mu\in \Gamma_X(\mathfrak{F})$.

Conversely, if $\sQ$ is a meet continuous quantale, then for each prefilter $F$ on $X$,    the map \[\Lambda_X(F): \sQ^X\lra \sQ, \quad \Lambda_X(F)(\lam)=\bv_{\mu\in F}\sub_X(\mu,\lam)
\] is   a  $\sQ$-semifilter.

\begin{con}From now on, all quantales are assumed to be meet continuous. \end{con}

The assignment \(X \mapsto \Gamma_X \) is a natural transformation   \[\Gamma:\SFQ\lra\SPF;\]
the assignment \(X \mapsto \Lambda_X \) is a natural transformation  \[\Lambda:\SPF\lra\SFQ.\]
Furthermore, since for each prefilter  $F$   and each $\sQ$-semifilter  $\mathfrak{F}$ on a set $X$,
\begin{align*} F\subseteq\Gamma_X(\frak{F})&\iff \forall\lam\in F,\frak{F}(\lam)\geq k\\
&\iff  \forall\lam\in F,\forall\mu\in \sQ^X,\sub_X(\lam,\mu)\leq\frak{F}(\mu)
\\
&\iff \Lambda_X(F)\leq\frak{F},
\end{align*}   $\Lambda_X$ and $\Gamma_X$ form a Galois connection \cite{Gierz2003} between the partially ordered sets of prefilters and $\sQ$-semifilters on $X$ with $\Lambda_X$ being the left adjoint.

\begin{prop}\label{saturation} For each prefilter $F$ on a set $X$, the saturation of  $F$ is given by \[\Gamma_X\circ\Lambda_X(F)=\Big\{\lam\in\sQ^X\mid \bv_{\mu\in F}\sub_X(\mu,\lam)\geq k\Big\}. \] In particular,  $F$   is  saturated if and only if  $F=\Gamma_X(\mathfrak{F})$ for some $\sQ$-semifilter $\mathfrak{F}$, if and only if $F=\Gamma_X\circ\Lambda_X(F)$.
\end{prop}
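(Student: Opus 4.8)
The plan is to exploit the Galois connection between prefilters and $\sQ$-semifilters that has just been established, since saturation is exactly the closure operator it induces. Recall that $\Lambda_X$ is left adjoint to $\Gamma_X$, so the composite $\Gamma_X\circ\Lambda_X$ is a closure operator on the poset of prefilters on $X$: it is monotone, inflationary ($F\subseteq\Gamma_X\Lambda_X(F)$), and idempotent. The explicit description of this composite is immediate by unwinding definitions, since $\Lambda_X(F)(\lam)=\bv_{\mu\in F}\sub_X(\mu,\lam)$ and $\Gamma_X(\mathfrak{F})=\{\lam\mid\mathfrak{F}(\lam)\geq k\}$, giving exactly the displayed set $\{\lam\in\sQ^X\mid \bv_{\mu\in F}\sub_X(\mu,\lam)\geq k\}$.

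First I would observe that $\Gamma_X\Lambda_X(F)$ is a saturated prefilter: it is of the form $\Gamma_X(\mathfrak{F})$ for $\mathfrak{F}=\Lambda_X(F)$, which is a $\sQ$-semifilter by the paragraph preceding the Convention (using meet continuity), and the opening computation of this section shows every set of the form $\Gamma_X(\mathfrak{F})$ is saturated. Next I would show $\Gamma_X\Lambda_X(F)$ contains $F$ and is the smallest saturated prefilter doing so. Containment is the unit of the adjunction; minimality follows because if $G$ is any saturated prefilter with $F\subseteq G$, then $G=\Gamma_X(\mathfrak{G})$ for some $\sQ$-semifilter $\mathfrak{G}$, whence $\Lambda_X(F)\leq\Lambda_X(G)\leq\mathfrak{G}$ by monotonicity and the adjunction ($F\subseteq G=\Gamma_X(\mathfrak{G})\iff\Lambda_X(F)\leq\mathfrak{G}$), and applying the monotone $\Gamma_X$ gives $\Gamma_X\Lambda_X(F)\subseteq\Gamma_X(\mathfrak{G})=G$. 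This establishes the saturation formula.

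For the final ``in particular'' equivalences, I would argue cyclically. If $F$ is saturated then by definition $F$ equals its own saturation $\Gamma_X\Lambda_X(F)$, which is $\Gamma_X(\mathfrak{F})$ for $\mathfrak{F}=\Lambda_X(F)$. Conversely, if $F=\Gamma_X(\mathfrak{F})$ for some $\sQ$-semifilter $\mathfrak{F}$, then the section's opening argument already shows such an $F$ is saturated. And $F=\Gamma_X\Lambda_X(F)$ is equivalent to $F$ being saturated because saturation fixes a prefilter precisely when it is already closed; formally, $F$ saturated forces $F$ to coincide with the smallest saturated prefilter containing it, while $F=\Gamma_X\Lambda_X(F)$ exhibits $F$ in the form $\Gamma_X(\mathfrak{F})$ and hence as saturated.

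The main obstacle I anticipate is not any single computation but making sure meet continuity is invoked exactly where needed, namely in asserting that $\Lambda_X(F)$ is a genuine $\sQ$-semifilter so that $\Gamma_X$ may be applied to it; under the Standing Convention this is guaranteed, so the argument reduces to a clean application of the closure-operator formalism for Galois connections together with the two structural facts—that $\Gamma_X(\mathfrak{F})$ is always saturated and that every saturated prefilter arises this way—that the section supplies.
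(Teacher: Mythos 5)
Your overall strategy is the same as the paper's: read the proposition off from the Galois connection $\Lambda_X\dashv\Gamma_X$ together with the definitional unwinding $\lam\in\Gamma_X\circ\Lambda_X(F)\iff\bv_{\mu\in F}\sub_X(\mu,\lam)\geq k$. (The paper's own proof is exactly this, stated in two lines.) However, as written your argument is circular at one point. In the minimality step you assert that any saturated prefilter $G$ containing $F$ can be written as $G=\Gamma_X(\mathfrak{G})$ for some $\sQ$-semifilter $\mathfrak{G}$; but this is precisely one of the ``in particular'' equivalences being proved, and the only justification you offer for it appears in your final paragraph, where you deduce ``$F$ saturated $\Rightarrow F=\Gamma_X\circ\Lambda_X(F)$'' from the saturation formula --- the very formula whose proof (via minimality) invoked that assertion. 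So the dependency runs: minimality uses (saturated $\Rightarrow$ image of $\Gamma_X$), which you justify by the saturation formula, which rests on minimality.

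The repair is one line, and it is exactly the observation the paper's proof pivots on: by your paragraph-one description of $\Gamma_X\circ\Lambda_X(G)$, H\"{o}hle's definition of ``saturated'' says verbatim that $\Gamma_X\circ\Lambda_X(G)\subseteq G$; combined with the unit $G\subseteq\Gamma_X\circ\Lambda_X(G)$ this gives $G=\Gamma_X\circ\Lambda_X(G)=\Gamma_X(\Lambda_X(G))$, independently of the saturation formula. Alternatively, skip the detour through images of $\Gamma_X$ altogether: if $F\subseteq G$ with $G$ saturated, then monotonicity gives $\Gamma_X\circ\Lambda_X(F)\subseteq\Gamma_X\circ\Lambda_X(G)\subseteq G$, the last inclusion again being nothing but the definition of saturatedness. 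With either patch inserted at the point where the assertion is first used, your proof closes and coincides in substance with the paper's.
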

\begin{proof} This follows immediately from the adjunction $\Lambda_X\dashv\Gamma_X$ and the fact that \[\lam\in \Gamma_X\circ\Lambda_X(F)\iff \Lambda_X(F)(\lam)=\bv_{\mu\in F}\sub_X(\mu,\lam)\geq k\] for all $\lam\in\sQ^X$. \end{proof}

The adjunction $\Lambda_X\dashv\Gamma_X$ leads to the following:
\begin{defn}A   $\sQ$-semifilter $\mathfrak{F}$ is  conical  if $\frak{F}=\Lambda_X(F)$ for some prefilter $F$. \end{defn}

Let $f:X\lra Y$ be a map and $\mathfrak{F}$  be a conical $\sQ$-semifilter on $X$.
Since \begin{align*}f(\mathfrak{F})(\mu)&=\mathfrak{F}(\mu\circ f) \\ &= \bv_{\lam\in \Gamma_X(\mathfrak{F})}\sub_X(\lam,\mu\circ f) \\ &=\bv_{\lam\in \Gamma_X(\mathfrak{F})}\sub_Y(f(\lam),\mu)\\ &= \bv_{\gamma\in f(\Gamma_X(\mathfrak{F}))} \sub_Y(\gamma,\mu),\end{align*} it follows that $f(\mathfrak{F})$ is   conical. Therefore, we obtain a functor \[\CSFQ:{\sf Set}\lra{\sf Set}.\]

A  $\sQ$-semifilter $\mathfrak{F}$ is  conical if and only if $\Lambda_X\circ\Gamma_X(\frak{F})=\frak{F}$, so, \(F\mapsto\Lambda_X(F)\) establishes a   bijection between saturated prefilters and conical $\sQ$-semifilters on $X$. Since   \[f(\Lambda_X(F))=\Lambda_X(f(F)) \]  for each map $f:X\lra Y$ and each prefilter $F$ on $X$,  it follows that the saturated prefilter functor is naturally isomorphic to the  conical $\sQ$-semifilter functor: \begin{equation} \label{natural iso} \bfig
\morphism(500,0)|b|/{@{>}@<3pt>}/<-500,0>[\CSFQ.`{\sf SPF};\Gamma]
\morphism(0,0)|a|/{@{>}@<3pt>}/<500,0>[{\sf SPF}`\CSFQ.;\Lambda]
\efig\end{equation}

The functor $\CSFQ$ is   a subfunctor of $\SFQ$. Furthermore, it is a retract of $\SFQ$, as we see now. For each set $X$, define \[\mathfrak{c}_X:\SFQ(X)\to\CSFQ(X)\] by letting  \[\mathfrak{c}_X(\mathfrak{F})= \Lambda_X\circ\Gamma_X(\mathfrak{F})\] for each $\sQ$-semifilter $\mathfrak{F}$. Then $\mathfrak{c}_X$ is right adjoint and left inverse to the inclusion \[\mathfrak{i}_X:\CSFQ(X)\lra\SFQ(X).\] This implies that $\mathfrak{c}_X(\mathfrak{F})$ is the largest conical $\sQ$-semifilter that is smaller than or equal to $\mathfrak{F}$, so we call it the \emph{conical coreflection} of $\mathfrak{F}$. We note that for each $\lam\in\sQ^X$, \[\mathfrak{F}(\lam)\geq k\iff \mathfrak{c}_X(\mathfrak{F})(\lam)\geq k.\]

Since $\mathfrak{c}=\{\mathfrak{c}_X\}_X$ and  $\mathfrak{i}=\{\mathfrak{i}_X\}_X$ are   natural transformations, we get natural transformations
  \[{\sf d}\coloneqq\mathfrak{c}\circ\sfe   \quad \text{and}\quad {\sf n} \coloneqq\mathfrak{c}\circ\sfm \circ(\mathfrak{i}*\mathfrak{i}), \]  where    $\sfe$ and $\sfm$  refer to   unit and   multiplication of the monad $(\SFQ,\sfm,\sfe)$, respectively. \[\bfig \qtriangle<550,450>[1_{\sf Set}`\SFQ`\CSFQ;\sfe`\sfd`\mathfrak{c}]\square(1200,0)/>`>`>`<-/<650,450>[\CSFQ^2 `\SFQ^2`\CSFQ`\SFQ; \mathfrak{i}*\mathfrak{i}`\sfn`\sfm`\mathfrak{c}]\efig\]

We spell out the details of   $\sfd$ and $\sfn$ for later use. For each  set $X$ and each $x$ of $X$, since the $\sQ$-semifilter $\sfe_X(x)$ is conical, it follows that\[\sfd_X(x) =\sfe_X(x).\]  For each conical $\sQ$-semifilter  $\mathbb{F}$  on $\CSFQ(X)$ and  each   $\xi:\SFQ(X)\lra\sQ$, since \[(\mathfrak{i}*\mathfrak{i})_X(\mathbb{F})(\xi)= \mathbb{F}(\xi\circ\mathfrak{i}_X),\] it follows that $\sfn_X(\mathbb{F})$,   the conical coreflection of   $\sfm_X\circ(\mathfrak{i}*\mathfrak{i})_X(\mathbb{F}) $, is given by   \[ \mathsf{n}_X(\mathbb{F})(\lam) =\bigvee_{\mathsf{m}_X\circ(\mathfrak{i}*\mathfrak{i})_X(\mathbb{F}) (\mu) \geq k}\sub_X(\mu,\lam) =\bigvee_{\mathbb{F}(\widehat{\mu}\circ \mathfrak{i}_X)\geq k}\sub_X(\mu,\lam) \] for all $\lam\in\sQ^X$.

Let $\Gamma$ and $\Lambda$ be the natural isomorphisms in  (\ref{natural iso}).  Then, \[ \Gamma\circ \sfd\] is a natural transformation   ${\rm id}\lra\SPF$   and \[ \Gamma\circ \sfn\circ (\Lambda*\Lambda)\] is a natural transformation $\SPF^2\lra \SPF$.
The following proposition says that $\Gamma\circ \sfd$ and $\Gamma\circ \sfn\circ (\Lambda*\Lambda)$ are, respectively, the natural transformations $\mathfrak{d}$ and $\mathfrak{n}$  given via the formulas (\ref{defn of d}) and (\ref{defn of n}).

\begin{prop}\label{natural of d and n} $\mathfrak{d} =\Gamma\circ \sfd$  and $\mathfrak{n}=\Gamma\circ \sfn\circ (\Lambda*\Lambda)$.
\end{prop}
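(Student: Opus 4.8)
The plan is to check both identities componentwise on a generic argument, since each side is a natural transformation valued in $\SPF$, hence determined at each set $X$ by the subset of $\sQ^X$ it produces. The first identity $\mathfrak{d}=\Gamma\circ\sfd$ is immediate from the excerpt: since $\sfd_X(x)=\sfe_X(x)$ and $\sfe_X(x)(\lam)=\lam(x)$, the definition of $\Gamma_X$ gives $\Gamma_X(\sfd_X(x))=\{\lam\in\sQ^X\mid\lam(x)\geq k\}$, which is exactly $\mathfrak{d}_X(x)$ as in (\ref{defn of d}). So all the work is in the second identity. There I would fix a set $X$, a saturated prefilter $\CF$ on $\SPF(X)$, and $\lam\in\sQ^X$, and prove the membership equivalence $\lam\in\mathfrak{n}_X(\CF)$ iff $\lam\in\Gamma_X(\sfn_X((\Lambda*\Lambda)_X(\CF)))$; ranging over all $\lam$ then yields the claimed set equality.

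Writing $\mathbb{F}:=(\Lambda*\Lambda)_X(\CF)$, the first step is to unfold the right-hand side to a single inequality. Membership in $\Gamma_X$ means $\sfn_X(\mathbb{F})(\lam)\geq k$. Because $\sfn_X(\mathbb{F})$ is the conical coreflection of $\sfm_X\circ(\mathfrak{i}*\mathfrak{i})_X(\mathbb{F})$, and the coreflection $\mathfrak{c}_X$ preserves the predicate ``value $\geq k$'' (recorded in the excerpt), this is equivalent to $\sfm_X\circ(\mathfrak{i}*\mathfrak{i})_X(\mathbb{F})(\lam)\geq k$. Expanding the double $\sQ$-powerset multiplication $\sfm_X(\cdot)(\lam)=(\cdot)(\widehat{\lam})$ and the horizontal-composite formula $(\mathfrak{i}*\mathfrak{i})_X(\mathbb{F})(\xi)=\mathbb{F}(\xi\circ\mathfrak{i}_X)$, the right-hand condition collapses to $\mathbb{F}(\widehat{\lam}\circ\mathfrak{i}_X)\geq k$, where $\widehat{\lam}(\mathfrak{G})=\mathfrak{G}(\lam)$.

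The heart of the argument is then to compute $\mathbb{F}(\widehat{\lam}\circ\mathfrak{i}_X)$. Using naturality to write the horizontal composite as $(\Lambda*\Lambda)_X=\CSFQ(\Lambda_X)\circ\Lambda_{\SPF(X)}$ together with the functorial action $g(\mathfrak{F})(\xi)=\mathfrak{F}(\xi\circ g)$, I get $\mathbb{F}(\widehat{\lam}\circ\mathfrak{i}_X)=\Lambda_{\SPF(X)}(\CF)(\widehat{\lam}\circ\mathfrak{i}_X\circ\Lambda_X)=\bv_{\Psi\in\CF}\sub_{\SPF(X)}(\Psi,\widehat{\lam}\circ\mathfrak{i}_X\circ\Lambda_X)$. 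The key identification is that the test function $\widehat{\lam}\circ\mathfrak{i}_X\circ\Lambda_X\colon\SPF(X)\lra\sQ$ is precisely $\widetilde{\lam}$: evaluating at a saturated prefilter $F$ gives $\Lambda_X(F)(\lam)=\bv_{\mu\in F}\sub_X(\mu,\lam)=\widetilde{\lam}(F)$. Hence $\mathbb{F}(\widehat{\lam}\circ\mathfrak{i}_X)=\bv_{\Psi\in\CF}\sub_{\SPF(X)}(\Psi,\widetilde{\lam})$, and this is $\geq k$ if and only if $\widetilde{\lam}\in\CF$ --- the forward implication being exactly the saturation of $\CF$ and the converse holding since $\sub_{\SPF(X)}(\widetilde{\lam},\widetilde{\lam})\geq k$. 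As $\widetilde{\lam}\in\CF$ is by definition the condition $\lam\in\mathfrak{n}_X(\CF)$, the two sides coincide.

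I expect the main obstacle to be the bookkeeping in the third step: correctly expanding $\Lambda*\Lambda$ through its two functor layers and tracking which map is fed into $\mathbb{F}$, so that the collapse $\widehat{\lam}\circ\mathfrak{i}_X\circ\Lambda_X=\widetilde{\lam}$ becomes visible. Once that identification is in place, the remainder is a routine chase assembling the excerpt's facts --- the coreflection preserving ``$\geq k$'', the multiplication and horizontal-composite formulas, and the saturation condition defining membership in $\CF$ --- in the right order.
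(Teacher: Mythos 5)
Your proof is correct and follows essentially the same route as the paper's: unfold $\Gamma_X$ and the coreflection to reduce to $\sfm_X\circ(\mathfrak{i}*\mathfrak{i})_X(\mathbb{F})(\lam)\geq k$, expand the horizontal composite to get $\Lambda_{\SPF(X)}(\CF)(\widehat{\lam}\circ\mathfrak{i}_X\circ\Lambda_X)\geq k$, invoke saturation of $\CF$, and identify $\widehat{\lam}\circ\mathfrak{i}_X\circ\Lambda_X=\widetilde{\lam}$. The only cosmetic difference is that you identify the test function with $\widetilde{\lam}$ before applying saturation, while the paper does so afterward.
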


\begin{proof} The first equality is obvious. As for the second, let    $\lam\in\sQ^X$. Then   \begin{align*}\lam\in \Gamma_X\circ \sfn_X\circ (\Lambda*\Lambda)_X(\CF)
&\iff \sfm_X\circ(\mathfrak{i}*\mathfrak{i})_X \circ (\Lambda*\Lambda)_X(\CF)(\lam)\geq k\\
&\iff    (\Lambda*\Lambda)_X(\CF)(\widehat{\lam}\circ\mathfrak{i}_X)\geq k \\ &\iff \Lambda_{\SPF(X)}(\CF)(\widehat{\lam}\circ\mathfrak{i}_X\circ\Lambda_X)\geq k\\ &\iff \widehat{\lam}\circ\mathfrak{i}_X\circ\Lambda_X\in\CF.  \quad~~(\CF~\text{is saturated}) \end{align*} Since for each saturated prefilter $F$ on $X$,   \[ \widehat{\lam}\circ\mathfrak{i}_X\circ\Lambda_X(F)=\Lambda_X(F)(\lam)=\bv_{\mu\in F}\sub_X(\mu,\lam) =\widetilde{\lam}(F), \] it follows that $\lam \in \Gamma_X\circ \sfn_X\circ (\Lambda*\Lambda)_X(\CF)$ if and only if $\widetilde{\lam} \in\CF$, hence $\mathfrak{n}=\Gamma\circ \sfn\circ (\Lambda*\Lambda)$. \end{proof}

We say that  conical $\sQ$-semifilters are \emph{closed under multiplication} if for each set $X$ and each conical $\sQ$-semifilter $\mathbb{F}$ on $\CSFQ(X)$, the $\sQ$-semifilter (on $X$) \[\sfm_X\circ(\mathfrak{i}*\mathfrak{i})_X(\mathbb{F})\] is   conical, where $\mathfrak{i}:\CSFQ\lra\SFQ$ denotes the inclusion transformation. By definition, $\sfm_X\circ(\mathfrak{i}*\mathfrak{i})_X(\mathbb{F})$ is  the diagonal $\sQ$-semifilter \[\sfm_X(\mathfrak{i}_X(\mathbb{F})) :\sQ^X\lra\sQ, \quad \lam\mapsto \mathbb{F}(\widehat{\lam}\circ \mathfrak{i}_X).\]

The following conclusion is obvious.

\begin{prop}\label{submonad} If the conical $\sQ$-semifilters are closed under multiplication, then \((\CSFQ, \sfn,\sfd)\)   is a  submonad of $(\SFQ,\sfm,\sfe)$. \end{prop}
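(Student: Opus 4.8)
The plan is to invoke the submonad criterion recorded in the paragraph following Proposition \ref{Q-semifilter monad}, applied to the ambient monad $(\SFQ,\sfm,\sfe)$ and its subfunctor $\CSFQ$ with inclusion transformation $\mathfrak{i}$. That criterion demands exactly two things, and the content of the statement is that both hold. For condition (i), that $\sfe_X(x)\in\CSFQ(X)$ for every set $X$ and every $x\in X$, I would simply recall what was observed when $\sfd$ was spelled out: the point $\sQ$-filter $\sfe_X(x)$ is conical, so it lies in $\CSFQ(X)$ and $\sfe$ restricts to a natural transformation ${\rm id}\lra\CSFQ$.

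Condition (ii), that $\CSFQ$ be closed under multiplication in the sense that $\sfm_X\circ(\mathfrak{i}*\mathfrak{i})_X(\mathbb{F})\in\CSFQ(X)$ for each $\mathbb{F}\in\CSFQ^2(X)$, is precisely the hypothesis of the proposition. Hence the criterion applies verbatim and yields a submonad whose multiplication is the restriction of $\sfm$ along $\mathfrak{i}$ and whose unit is the corestriction of $\sfe$.

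The only point requiring a word of care is to confirm that the transformations named in the statement, $\sfn$ and $\sfd$, are exactly these restrictions, so that the triple $(\CSFQ,\sfn,\sfd)$ really is the submonad the criterion produces. For this I would use that the conical coreflection $\mathfrak{c}_X=\Lambda_X\circ\Gamma_X$ fixes every conical $\sQ$-semifilter, which holds since $\mathfrak{F}$ is conical if and only if $\Lambda_X\circ\Gamma_X(\mathfrak{F})=\mathfrak{F}$. Under the hypothesis $\sfm_X\circ(\mathfrak{i}*\mathfrak{i})_X(\mathbb{F})$ is already conical, hence fixed by $\mathfrak{c}_X$, so
\[\sfn_X(\mathbb{F})=\mathfrak{c}_X\bigl(\sfm_X\circ(\mathfrak{i}*\mathfrak{i})_X(\mathbb{F})\bigr)=\sfm_X\circ(\mathfrak{i}*\mathfrak{i})_X(\mathbb{F}),\]
and likewise $\sfd_X(x)=\mathfrak{c}_X(\sfe_X(x))=\sfe_X(x)$. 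Thus $\sfn$ and $\sfd$ coincide with the restrictions furnished by the criterion, and $(\CSFQ,\sfn,\sfd)$ is a submonad of $(\SFQ,\sfm,\sfe)$. There is no genuine obstacle here: the substance was already invested in constructing the coreflection $\mathfrak{c}$ and the general criterion, so the argument reduces to bookkeeping, in line with the author's remark that the conclusion is obvious.
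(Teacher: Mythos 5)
Your proposal is correct and takes exactly the route the paper intends: the paper's proof consists of the single remark that the conclusion is obvious, precisely because it is an instance of the general submonad criterion stated after Proposition \ref{Q-semifilter monad}, with condition (i) supplied by the already-noted fact that each $\sfe_X(x)$ is conical and condition (ii) being the hypothesis verbatim. Your extra bookkeeping step --- that $\mathfrak{c}_X$ fixes conical $\sQ$-semifilters, so $\sfn$ and $\sfd$ coincide with the restrictions of $\sfm$ and $\sfe$ furnished by the criterion --- is exactly the content left implicit in the paper's ``obvious.''
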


There is an easy-to-check condition for $\CSFQ$ to be a submonad of the $\sQ$-semifilter monad:

\begin{prop}\label{easy one} Let $\sQ$ be a quantale such that for each $p\in \sQ$, the map \[p\ra-: \sQ\lra \sQ\] preserves directed joins. Then,   $\CSFQ$ is a  submonad  of the $\sQ$-semifilter monad. \end{prop}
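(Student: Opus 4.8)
The plan is to apply Proposition \ref{submonad}, so that it suffices to show the conical $\sQ$-semifilters are closed under multiplication. Fix a set $X$, abbreviate $Y\coloneqq\CSFQ(X)$, and let $\mathbb{F}$ be a conical $\sQ$-semifilter on $Y$; put $\mathfrak{G}\coloneqq\sfm_X(\mathfrak{i}_X(\mathbb{F}))$, so that $\mathfrak{G}(\lam)=\mathbb{F}(\widehat{\lam}\circ\mathfrak{i}_X)$. I must prove that $\mathfrak{G}$ is conical. Since $\mathfrak{c}_X=\Lambda_X\circ\Gamma_X$ is deflationary, one always has $\mathfrak{c}_X(\mathfrak{G})\leq\mathfrak{G}$, so the whole task reduces to the reverse inequality $\mathfrak{G}(\lam)\leq\mathfrak{c}_X(\mathfrak{G})(\lam)=\bv_{\mathfrak{G}(\nu)\geq k}\sub_X(\nu,\lam)$ for every $\lam\in\sQ^X$.

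The crux, and the only place where the hypothesis enters, is the following lemma: \emph{if $\mathfrak{A}$ is a conical $\sQ$-semifilter on $X$, then $p\ra\mathfrak{A}(\lam)\leq\mathfrak{A}(p\ra\lam)$ for all $p\in\sQ$ and $\lam\in\sQ^X$}, where $p\ra\lam$ denotes the map $x\mapsto p\ra\lam(x)$. To prove it I would write $\mathfrak{A}(\theta)=\bv_{\mu\in\Gamma_X(\mathfrak{A})}\sub_X(\mu,\theta)$, which is a \emph{directed} join because $\Gamma_X(\mathfrak{A})$ is closed under finite meets and $\sub_X(-,\theta)$ is antitone. Now push $p\ra-$ through this directed join — this is exactly where the assumption that $p\ra-$ preserves directed joins is invoked — and then use the adjunction identities $p\ra\sub_X(\mu,\lam)=\sub_X(p\with\mu,\lam)=\sub_X(\mu,p\ra\lam)$ to recognise both $p\ra\mathfrak{A}(\lam)$ and $\mathfrak{A}(p\ra\lam)$ as $\bv_{\mu\in\Gamma_X(\mathfrak{A})}\sub_X(p\with\mu,\lam)$. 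This actually gives equality, though only the stated inequality is needed. This is the step I expect to be the main obstacle: if $p\ra-$ fails to preserve directed joins it cannot be exchanged with the directed join defining the conical semifilter $\mathfrak{A}$, and the lemma on which the whole argument rests breaks down.

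Granting the lemma, the remainder is adjunction bookkeeping. Since $\mathbb{F}$ is conical I would write $\mathbb{F}=\Lambda_Y(\mathcal{F})$ with $\mathcal{F}\coloneqq\Gamma_Y(\mathbb{F})$ a saturated prefilter on $Y$, so that $\mathfrak{G}(\lam)=\bv_{\Psi\in\mathcal{F}}\sub_Y(\Psi,\widehat{\lam}\circ\mathfrak{i}_X)$; it therefore suffices to bound each $c_\Psi\coloneqq\sub_Y(\Psi,\widehat{\lam}\circ\mathfrak{i}_X)$ by $\mathfrak{c}_X(\mathfrak{G})(\lam)$. Fix $\Psi\in\mathcal{F}$ and set $\nu\coloneqq c_\Psi\ra\lam$. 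From $c_\Psi\leq(c_\Psi\ra\lam(x))\ra\lam(x)$ one obtains $\sub_X(\nu,\lam)\geq c_\Psi$. On the other hand, the definition of $c_\Psi$ gives $c_\Psi\with\Psi(\mathfrak{A})\leq\mathfrak{A}(\lam)$, i.e. $\Psi(\mathfrak{A})\leq c_\Psi\ra\mathfrak{A}(\lam)$, for every $\mathfrak{A}\in Y$; since every $\mathfrak{A}\in Y$ is conical, the Key Lemma upgrades the right-hand side to $\mathfrak{A}(c_\Psi\ra\lam)=\mathfrak{A}(\nu)$, so that $\Psi\leq\widehat{\nu}\circ\mathfrak{i}_X$ pointwise on $Y$. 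By upward closure of the prefilter $\mathcal{F}$ this yields $\widehat{\nu}\circ\mathfrak{i}_X\in\mathcal{F}$, i.e. $\mathfrak{G}(\nu)\geq k$, so $\sub_X(\nu,\lam)$ is one of the terms defining $\mathfrak{c}_X(\mathfrak{G})(\lam)$. Hence $c_\Psi\leq\sub_X(\nu,\lam)\leq\mathfrak{c}_X(\mathfrak{G})(\lam)$, and taking the join over $\Psi\in\mathcal{F}$ gives $\mathfrak{G}\leq\mathfrak{c}_X(\mathfrak{G})$. Thus $\mathfrak{G}$ is conical, the conical $\sQ$-semifilters are closed under multiplication, and Proposition \ref{submonad} finishes the proof.
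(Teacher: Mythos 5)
Your proof is correct, and its crux coincides with the paper's: your Key Lemma is exactly the forward half of Lemma \ref{confilt=strong} (conical implies $\mathfrak{A}(p\ra\lam)=p\ra\mathfrak{A}(\lam)$), proved by the identical computation in which the hypothesis on $p\ra-$ is used once, to commute it past the directed join $\bv_{\mu\in\Gamma_X(\mathfrak{A})}\sub_X(\mu,\lam)$; and, like the paper, you reduce everything to closure under multiplication via Proposition \ref{submonad}. The endgame differs, though. The paper proves Lemma \ref{confilt=strong} as an equivalence (its converse resting on Lemma \ref{coni filt1}), applies the forward half both to the elements of $\CSFQ(X)$, giving $\widehat{p\ra\lam}\circ\mathfrak{i}_X=p\ra(\widehat{\lam}\circ\mathfrak{i}_X)$, and to $\mathbb{F}$ itself, giving $\mathbb{F}(p\ra\xi)=p\ra\mathbb{F}(\xi)$, and then cites the converse to conclude that the diagonal $\mathfrak{G}$ is conical. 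You never apply the lemma to $\mathbb{F}$ and need no converse characterization: expanding $\mathfrak{G}(\lam)=\bv_{\Psi}c_\Psi$ from the conical form of $\mathbb{F}$, you exhibit for each term the witness $\nu=c_\Psi\ra\lam$, whose membership in $\Gamma_X(\mathfrak{G})$ follows from $\Psi\leq\widehat{\nu}\circ\mathfrak{i}_X$ plus upward closure of the prefilter $\Gamma_Y(\mathbb{F})$ (where $Y=\CSFQ(X)$), and which satisfies $c_\Psi\leq\sub_X(\nu,\lam)$. In effect you inline the sufficiency argument of Lemma \ref{coni filt1} into the diagonal setting; this makes the proof more self-contained and confines the directed-join hypothesis to semifilters on $X$, whereas the paper's route yields standalone characterizations of conicality whose pattern it reuses elsewhere (Lemma \ref{coni filt2}, Proposition \ref{CSF is closed under meets}).
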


We prove two lemmas first. The first  is a slight extension of   \cite[Proposition 9]{Garcia2006}; the second is  a slight extension of \cite[Corollary 3.13]{LZ2018}). Proofs are included here for convenience of the reader.

\begin{lem}\label{coni filt1}   A $\sQ$-semifilter $\mathfrak{F}$ on a  set $X$   is conical if and only if  $$\frak{F}(\lam)=\bv\{p\in \sQ\mid \frak{F}(p\ra\lam)\geq k\}$$ for all $\lam\in \sQ^X$. \end{lem}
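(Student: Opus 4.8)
The plan is to reduce the biconditional to a single pointwise identity that holds for \emph{every} $\sQ$-semifilter, after which the lemma becomes a restatement of the definition of conicality. Recall that $\mathfrak{F}$ is conical precisely when $\mathfrak{F}=\Lambda_X\circ\Gamma_X(\mathfrak{F})$, and that by the formulas for $\Lambda_X$ and $\Gamma_X$ one has
\[\Lambda_X\circ\Gamma_X(\mathfrak{F})(\lam)=\bv_{\mathfrak{F}(\mu)\geq k}\sub_X(\mu,\lam)\]
for all $\lam\in\sQ^X$. So it suffices to prove that for an arbitrary $\sQ$-semifilter $\mathfrak{F}$ the identity
\[\bv_{\mathfrak{F}(\mu)\geq k}\sub_X(\mu,\lam)=\bv\{p\in\sQ\mid \mathfrak{F}(p\ra\lam)\geq k\}\]
holds for every $\lam$; granting this, the stated condition $\mathfrak{F}(\lam)=\bv\{p\mid\mathfrak{F}(p\ra\lam)\geq k\}$ for all $\lam$ is literally the equation $\mathfrak{F}=\Lambda_X\circ\Gamma_X(\mathfrak{F})$, i.e.\ conicality.

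For the inequality $\leq$ I would fix $\mu$ with $\mathfrak{F}(\mu)\geq k$ and set $p=\sub_X(\mu,\lam)$. Since $p\leq\mu(x)\ra\lam(x)$ for every $x$, the adjunction $p\with q\leq r\iff q\leq p\ra r$ gives $\mu\leq p\ra\lam$ pointwise, whence monotonicity of $\mathfrak{F}$ (part of (F3')) yields $\mathfrak{F}(p\ra\lam)\geq\mathfrak{F}(\mu)\geq k$; thus $p$ lies in the right-hand index set and $\sub_X(\mu,\lam)\leq\bv\{p\mid\mathfrak{F}(p\ra\lam)\geq k\}$. For the reverse inequality I would fix $p$ with $\mathfrak{F}(p\ra\lam)\geq k$ and set $\mu=p\ra\lam$; then $\mathfrak{F}(\mu)\geq k$, and the counit inequality $p\with(p\ra\lam(x))\leq\lam(x)$ for each $x$ gives $p\leq(p\ra\lam(x))\ra\lam(x)$, hence $p\leq\sub_X(p\ra\lam,\lam)=\sub_X(\mu,\lam)\leq\bv_{\mathfrak{F}(\nu)\geq k}\sub_X(\nu,\lam)$. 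Taking joins over the respective index sets establishes both inequalities.

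The point worth emphasizing --- and the only thing one must be careful about --- is that both inequalities above are valid for \emph{arbitrary} $\sQ$-semifilters: the conicality hypothesis is never used in proving the identity itself. Conicality enters only through the identification of the common value with $\Lambda_X\circ\Gamma_X(\mathfrak{F})(\lam)$ together with the characterization ``conical $\iff\mathfrak{F}=\Lambda_X\circ\Gamma_X(\mathfrak{F})$.'' There is no serious obstacle here; the one routine check is that $p\ra(-)$ and $\sub_X(-,-)$ interact with the pointwise order exactly through the quantale adjunction, and that monotonicity of $\mathfrak{F}$ is available from (F3'). Meet continuity, our standing assumption, is not needed for this lemma beyond guaranteeing that $\Lambda_X(F)$ is a $\sQ$-semifilter in the first place.
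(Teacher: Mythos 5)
Your proof is correct, and every computation in it also occurs in the paper's proof, but your decomposition is genuinely different: the paper proves the two directions of the biconditional separately, whereas you extract the single unconditional identity $\Lambda_X\circ\Gamma_X(\mathfrak{F})(\lam)=\bv\{p\in\sQ\mid \mathfrak{F}(p\ra\lam)\geq k\}$, valid for an arbitrary $\sQ$-semifilter $\mathfrak{F}$, and then quote the fixed-point characterization of conicality ($\mathfrak{F}$ is conical iff $\mathfrak{F}=\Lambda_X\circ\Gamma_X(\mathfrak{F})$), which the paper states shortly before the lemma. Concretely, your ``$\geq$'' inequality is exactly the paper's sufficiency step (for $p$ in the index set, $p\ra\lam\in\Gamma_X(\mathfrak{F})$ and $p\leq\sub_X(p\ra\lam,\lam)$), and your ``$\leq$'' inequality is the second half of the paper's necessity step (for $\mu\in\Gamma_X(\mathfrak{F})$, the element $\sub_X(\mu,\lam)$ lies in the index set by monotonicity of $\mathfrak{F}$). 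What your reorganization buys: it exhibits the right-hand side of the lemma as a formula for the conical coreflection $\mathfrak{c}_X(\mathfrak{F})=\Lambda_X\circ\Gamma_X(\mathfrak{F})$ of \emph{any} $\sQ$-semifilter --- a slightly stronger, more symmetric statement --- and it makes superfluous the paper's first necessity inequality $p\leq\sub_X(p\ra\lam,\lam)\leq\mathfrak{F}(p\ra\lam)\ra\mathfrak{F}(\lam)\leq\mathfrak{F}(\lam)$, which invokes the full $\sQ$-functoriality (F3) rather than mere monotonicity. What the paper's route buys: it relies only on the counit inequality $\Lambda_X\circ\Gamma_X(\mathfrak{F})\leq\mathfrak{F}$ of the Galois connection rather than on the fixed-point characterization, though both facts are standard and both are available at that point in the text. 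Your closing remark is also accurate: meet continuity enters only to guarantee that $\Lambda_X$ produces $\sQ$-semifilters, not in the identity itself.
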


\begin{proof} \textbf{Necessity}.  Suppose that $\mathfrak{F}$ is   conical. If  $\frak{F}(p\ra\lam)\geq k$, then \[p\leq \sub_X(p\ra\lam,\lam)\leq \mathfrak{F}(p\ra\lam)\ra\mathfrak{F}(\lam)\leq\mathfrak{F}(\lam),\]  hence   $$\frak{F}(\lam)\geq\bv\{p\in\sQ\mid \frak{F}(p\ra\lam)\geq k\}.$$ As for the converse inequality,  for each $\nu\in\Gamma_X(\mathfrak{F})$, let $p_\nu=\sub_X(\nu,\lam)$. Since $\nu\leq p_\nu\ra \lam$, then $\mathfrak{F}(p_\nu\ra\lam)\geq k$, and consequently,    $$\frak{F}(\lam)=\bv_{\nu\in \Gamma_X(\mathfrak{F})}\sub_X(\nu,\lam)\leq\bv\{p\in\sQ\mid \frak{F}(p\ra\lam)\geq k\}.$$

\textbf{Sufficiency}. It suffices to check that $\frak{F}\leq\Lambda_X\circ\Gamma_X(\frak{F})$ since $\Lambda_X$ is left adjoint to $\Gamma_X$. For all $p\in\sQ$ with $\frak{F}(p\ra\lam)\geq k$,  one has $p\ra\lam\in\Gamma_X(\frak{F})$,  so, $$p\leq \sub_X(p\ra\lam,\lam)\leq\Lambda_X\circ\Gamma_X(\frak{F})(\lam).$$ Therefore,  $\frak{F}(\lam)\leq\Lambda_X\circ\Gamma_X(\frak{F})(\lam)$ since $\frak{F}(\lam)=\bv\{p\in\sQ\mid \frak{F}(p\ra\lam)\geq k\}$. \end{proof}

\begin{lem}\label{confilt=strong} Let $\sQ$ be a quantale such that for each $p\in \sQ$, the map $p\ra-: \sQ\lra \sQ$ preserves directed joins. Then,  a $\sQ$-semifilter $\mathfrak{F}$  on a set $X$  is conical if and only if \[\mathfrak{F}(p\ra\lam)=p\ra \mathfrak{F}(\lam)\] for all $p\in\sQ$ and $\lam\in\sQ^X$. \end{lem}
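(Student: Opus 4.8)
The plan is to treat the equivalence as two inequalities on the necessity side and a direct appeal to Lemma \ref{coni filt1} on the sufficiency side. First I would record the inequality $\mathfrak{F}(p\ra\lam)\le p\ra\mathfrak{F}(\lam)$, which holds for \emph{every} $\sQ$-semifilter and uses neither conicality nor the hypothesis on $p\ra-$: since $p\with(p\ra\lam)\le\lam$ holds pointwise, condition (F3') gives $p\with\mathfrak{F}(p\ra\lam)\le\mathfrak{F}(p\with(p\ra\lam))\le\mathfrak{F}(\lam)$, and the adjunction $p\with-\dashv p\ra-$ rewrites this as the claimed inequality. For sufficiency, assume $\mathfrak{F}(p\ra\lam)=p\ra\mathfrak{F}(\lam)$ for all $p,\lam$. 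Then $\mathfrak{F}(p\ra\lam)\ge k$ iff $p\ra\mathfrak{F}(\lam)\ge k$ iff $p\le\mathfrak{F}(\lam)$, so $\bv\{p\mid\mathfrak{F}(p\ra\lam)\ge k\}=\mathfrak{F}(\lam)$, and Lemma \ref{coni filt1} delivers conicality at once. I would point out that this direction makes no use of the standing hypothesis on $p\ra-$.

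The substance lies in the reverse inequality $p\ra\mathfrak{F}(\lam)\le\mathfrak{F}(p\ra\lam)$ under the assumption that $\mathfrak{F}$ is conical. Here I would start from $\mathfrak{F}(\lam)=\bv_{\mu\in\Gamma_X(\mathfrak{F})}\sub_X(\mu,\lam)$ and observe that, because the prefilter $\Gamma_X(\mathfrak{F})$ is closed under finite meets and $\sub_X(-,\lam)$ is antitone in its first argument, the family $\{\sub_X(\mu,\lam)\mid\mu\in\Gamma_X(\mathfrak{F})\}$ is directed, and it is nonempty since $k_X\in\Gamma_X(\mathfrak{F})$. This is precisely the configuration in which the standing hypothesis applies: it lets me pull $p\ra-$ inside the join to get $p\ra\mathfrak{F}(\lam)=\bv_{\mu}\bigl(p\ra\sub_X(\mu,\lam)\bigr)$. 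A termwise bound then closes the gap. Using that $p\ra-$ preserves arbitrary meets (being a right adjoint) together with the identity $p\ra(a\ra b)=a\ra(p\ra b)$, one obtains $p\ra\sub_X(\mu,\lam)=\sub_X(\mu,p\ra\lam)$, where $p\ra\lam$ denotes $x\mapsto p\ra\lam(x)$. Estimating by (F3) gives $\sub_X(\mu,p\ra\lam)\le\mathfrak{F}(\mu)\ra\mathfrak{F}(p\ra\lam)\le\mathfrak{F}(p\ra\lam)$, the last step because $\mathfrak{F}(\mu)\ge k$; taking the join over $\mu$ yields $p\ra\mathfrak{F}(\lam)\le\mathfrak{F}(p\ra\lam)$, and combining with the free inequality finishes the equality.

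The main obstacle, and the only place where the directed-join hypothesis is genuinely consumed, is the verification that $\{\sub_X(\mu,\lam)\mid\mu\in\Gamma_X(\mathfrak{F})\}$ is directed; everything after that is routine manipulation of the adjunction, the antitonicity of $\ra$ in its first variable, and the meet-preservation of $p\ra-$. I expect the equality $p\ra\sub_X(\mu,\lam)=\sub_X(\mu,p\ra\lam)$ to be the one calculation worth stating explicitly, since it is what converts the pulled-in implication into a subsethood degree to which (F3) can be applied.
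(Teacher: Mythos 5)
Your proof is correct and follows essentially the same route as the paper: sufficiency via Lemma \ref{coni filt1} exactly as in the paper, and necessity from the same three ingredients (conicality expressed as a join over $\Gamma_X(\mathfrak{F})$, the swap identity $p\ra\sub_X(\mu,\lam)=\sub_X(\mu,p\ra\lam)$, and the directed-join hypothesis applied to the directed family indexed by the prefilter). Your only departures are cosmetic --- you split the necessity into the ``free'' inequality $\mathfrak{F}(p\ra\lam)\le p\ra\mathfrak{F}(\lam)$ from (F3') plus the hard reverse inequality, and you make explicit the directedness check that the paper leaves implicit --- whereas the paper runs the whole computation as a single chain of equalities.
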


\begin{proof} If $\mathfrak{F}$ is conical, then for each $p\in\sQ$, \begin{align*} \mathfrak{F} (p\ra\lam)&=\bv_{\nu\in\Gamma_X(\frak{F})}\sub_X(\nu,p\ra\lam)\\ &=\bv_{\nu\in\Gamma_X(\frak{F})}p\ra \sub_X(\nu,\lam)\\ &=p\ra\bv_{\nu\in\Gamma_X(\frak{F})}\sub_X(\nu,\lam)\\ &=p\ra\frak{F}(\lam).\end{align*}  Conversely,  assume that $\mathfrak{F}(p\ra\lam)=p\ra \mathfrak{F}(\lam)$ for all $p\in\sQ$ and $\lam\in\sQ^X$. Then  \[p\leq \mathfrak{F}(\lam)\iff \mathfrak{F}(p\ra\lam)\geq k,\] hence \[\frak{F}(\lam)=\bv\{p\in \sQ\mid \frak{F}(p\ra\lam)\geq k\},\] and consequently, $\mathfrak{F}$ is conical by Lemma \ref{coni filt1}.  \end{proof}

\begin{proof}[Proof of Proposition \ref{easy one}] It suffices to check that conical $\sQ$-semifilters are closed under multiplication. Let $\mathbb{F}$ be a conical $\sQ$-semifilter  on   $\CSFQ(X)$. We need to check that the diagonal $\sQ$-semifilter $\sfm_X(\mathfrak{i}_X(\mathbb{F}))$ is conical. For each $\lam\in\sQ^X$ and $p\in \sQ$, since \begin{align*}\sfm_X(\mathfrak{i}_X(\mathbb{F}))(p\ra\lam)&= \mathfrak{i}_X(\mathbb{F})(\widehat{p\ra\lam}) \\ & =\mathbb{F}(p\ra(\widehat{\lam}\circ \mathfrak{i}_X))\\ & = p\ra \mathbb{F}(\widehat{\lam}\circ \mathfrak{i}_X)\\ &=p\ra \sfm_X(\mathfrak{i}_X(\mathbb{F}))(\lam),\end{align*}  then the conclusion follows immediately from  Lemma \ref{confilt=strong}. \end{proof}

\begin{cor}\label{easyone-b} Let $\sQ$ be a quantale such that for each $p\in \sQ$,  the map $p\ra-: \sQ\lra \sQ$ preserves directed joins. Then,   $(\SPF, \mathfrak{n},\mathfrak{d})$ is a monad  on the category of  sets. \end{cor}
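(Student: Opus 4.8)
The plan is to deduce this corollary purely formally from the machinery already assembled, by transporting a monad structure along the natural isomorphism $\SPF\cong\CSFQ$ recorded in (\ref{natural iso}). Since the hypothesis on $\sQ$ is verbatim the one in Proposition \ref{easy one}, I would begin by invoking that proposition: under this assumption $\CSFQ$ is a submonad of $(\SFQ,\sfm,\sfe)$, so in particular $(\CSFQ,\sfn,\sfd)$ is a monad on ${\sf Set}$.

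The only further ingredient is the general fact that a monad structure is carried across a natural isomorphism of endofunctors: if $(T,\mu,\eta)$ is a monad and $\phi\colon T\to S$ is a natural isomorphism with inverse $\psi$, then $(S,\;\phi\circ\mu\circ(\psi*\psi),\;\phi\circ\eta)$ is again a monad and $\phi$ is an isomorphism of monads. I would instantiate this with $T=\CSFQ$, $S=\SPF$, $\phi=\Gamma$, $\psi=\Lambda$, $\eta=\sfd$ and $\mu=\sfn$. By the definitions of $\mathfrak{d}$ and $\mathfrak{n}$, reconfirmed in Proposition \ref{natural of d and n}, the transported unit and multiplication are exactly
\[\phi\circ\eta=\Gamma\circ\sfd=\mathfrak{d},\qquad \phi\circ\mu\circ(\psi*\psi)=\Gamma\circ\sfn\circ(\Lambda*\Lambda)=\mathfrak{n},\]
so the transported monad is precisely $(\SPF,\mathfrak{n},\mathfrak{d})$, which is therefore a monad.

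I do not expect any genuine obstacle; all the substance lives in Propositions \ref{submonad} and \ref{easy one}, and the transport step is routine diagram chasing. Should one prefer not to cite the transport principle abstractly, one can verify the two unit laws and the associativity square for $(\SPF,\mathfrak{n},\mathfrak{d})$ directly, each being obtained by sandwiching the corresponding law for $(\CSFQ,\sfn,\sfd)$ between $\Gamma$ and $\Lambda$, sliding the whiskered copies past the transformations using naturality of $\Gamma,\Lambda$, and collapsing the adjacent $\Gamma\circ\Lambda=\id$ and $\Lambda\circ\Gamma=\id$ pairs. The only mild bookkeeping is that $\Gamma*\Gamma$ is inverse to $\Lambda*\Lambda$ as a natural isomorphism $\CSFQ^2\cong\SPF^2$, which follows from the interchange law for horizontal composition.
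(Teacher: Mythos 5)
Your proposal is correct and is essentially the paper's own (implicit) argument: the corollary is intended as an immediate consequence of Proposition \ref{easy one} combined with the identifications $\mathfrak{d}=\Gamma\circ\sfd$ and $\mathfrak{n}=\Gamma\circ \sfn\circ(\Lambda*\Lambda)$ from Proposition \ref{natural of d and n}, which is exactly your transport of the monad $(\CSFQ,\sfn,\sfd)$ along the natural isomorphism $\Gamma$ with inverse $\Lambda$. The paper leaves this transport step unstated (it is the same step declared ``clear'' in $(3)\Leftrightarrow(4)$ of Theorem \ref{main1}); your write-up merely makes that routine verification explicit.
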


The condition in Proposition \ref{easy one}   is  a  strong one. For example,  for a continuous t-norm $\&$,  the quantale $\sQ=([0,1],\with,1)$ satisfies the condition in Proposition \ref{easy one} (i.e., $p\ra-:[0,1]\lra[0,1]$ preserves directed joins for all $p$)  if and only if  $\&$ is Archimedean.  So, there are essentially only two such  t-norms --- the product t-norm and the \L ukasiewicz t-norm.

It seems hard to find a sufficient and necessary condition for a general quantale $\sQ$ so that \((\CSFQ, \sfn,\sfd)\) is monad. However, in the case that $\sQ$ is the interval $[0,1]$ equipped with a continuous t-norm, there is one.

\begin{prop}{\rm (\cite{LZ2020})} For each continuous t-norm $\&$    on $[0,1]$, the following conditions are equivalent: \begin{enumerate}[label=\rm(S\arabic*)]\item For each non-idempotent element $a\in[0,1]$, the quantale $([a^-,a^+],\with ,a^+)$ is isomorphic to $([0,1],\with_P,1)$   whenever $a^->0$.
\item The implication   $\ra:[0,1]^2\to[0,1]$ is  continuous  at every point off the diagonal $\{(x,x)\mid x\in[0,1]\}$.
\item For each $p\in(0,1]$, the map $p\ra-:[0,p)\lra[0,1]$   preserves directed joins.  \end{enumerate} \end{prop}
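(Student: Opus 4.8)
The plan is to prove the cyclic chain $(S1)\Rightarrow(S2)\Rightarrow(S3)\Rightarrow(S1)$. The unifying device is that every question about $\ra$ can be localized by the ordinal sum decomposition (Theorem~\ref{ordinal sum}) together with the consequence of Proposition~\ref{idempotent} recorded above: if $y<e\leq x$ for some idempotent $e$, then $x\ra y=y$. Thus $\ra$ can only misbehave inside the closure $[a^{-},a^{+}]$ of a single summand, where it is governed by an isomorphic copy of either $\with_{P}$ or $\with_{\L}$, and the only off-diagonal trouble spot will turn out to be the bottom idempotent $a^{-}$. I would first record two reductions. A point $(x_0,y_0)$ with $x_0<y_0$ is harmless because integrality forces $\ra\equiv 1$ on a neighbourhood, so $(S2)$ concerns only points with $x_0>y_0$; and for such a point, if some idempotent $e$ satisfies $y_0<e<x_0$, then $\ra$ equals the continuous second projection on a whole neighbourhood. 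This isolates two genuine cases: both coordinates strictly inside one summand, or the second coordinate sitting exactly at a bottom idempotent $a^{-}$.

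The two implications touching $(S3)$ are the quick ones. For $(S2)\Rightarrow(S3)$, fix $p\in(0,1]$ and a directed $D\subseteq[0,p)$ whose join $q=\bv D$ lies in $[0,p)$; then $(p,q)$ is off the diagonal, so continuity of $\ra$ at $(p,q)$ forces $p\ra d\to p\ra q$ along the net $D$, which combined with monotonicity of $p\ra-$ yields $p\ra\bv D=\bv_{d\in D}(p\ra d)$. For $(S3)\Rightarrow(S1)$ I argue by contraposition: if $(S1)$ fails there is a {\L}ukasiewicz summand $[a^{-},a^{+}]$ with $a^{-}>0$; choosing $p\in(a^{-},a^{+})$ and $D=[0,a^{-})$, the separation remark gives $p\ra d=d$ for all $d\in D$, so $\bv_{d\in D}(p\ra d)=a^{-}$, whereas transporting the identity $x\ra_{\L}0=1-x$ through the isomorphism $\phi$ gives $p\ra a^{-}=\phi^{-1}(1-\phi(p))>a^{-}$. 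Hence $p\ra-$ fails to preserve this directed join, so $(S3)$ fails.

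The substantive direction is $(S1)\Rightarrow(S2)$, and this is where I expect the real work. After the reductions, continuity at an interior off-diagonal point of a summand is immediate, since $\ra_{\L}$ is continuous everywhere and $\ra_{P}$ is continuous off its corner $(a^{-},a^{-})$. The decisive case is a point $(x_0,a^{-})$ with $x_0\in(a^{-},a^{+})$: approaching with the second variable from below gives limit $a^{-}$ by the separation remark, while approaching from inside the summand gives $\phi^{-1}\big(\phi(x_0)\ra_{T}0\big)$, where $T$ is the summand's t-norm. When $a^{-}=0$ there is nothing below and the summand formula alone is continuous; when $a^{-}>0$, hypothesis $(S1)$ forces $T=\with_{P}$, so $\phi(x_0)\ra_{P}0=0$ and the inside limit is $\phi^{-1}(0)=a^{-}$, matching the limit from below. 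Thus both one-sided limits agree and $\ra$ is continuous there. The main obstacle is precisely the bookkeeping in this step: one must check that the two independent limiting processes (the second variable crossing the idempotent $a^{-}$, the first variable moving inside the summand) combine into a genuine joint limit, and that the remaining boundary positions (the second coordinate at a top idempotent, or coordinates separated by an idempotent equal to $x_0$) introduce no new discontinuities — all of which rests on the unit law $1\ra_{T}s=s$ and the compatibility $\phi(x)\ra_{P}0=0$ of a product summand with the identity map below it. Everything else is routine once the summand structure and the separation remark are in hand.
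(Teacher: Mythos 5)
You should first be aware that the paper itself gives no proof of this proposition: it is quoted from \cite{LZ2020}, so there is no in-paper argument to measure yours against, and your proof has to stand on its own. Judged that way, it is correct. The two reductions (points with $x_0<y_0$ are killed by integrality; points separated by an idempotent are killed by the projection formula $x\ra y=y$), the implication (S2)$\Rightarrow$(S3) via the increasing net $(p,d)_{d\in D}\to(p,q)$, and the contrapositive (S3)$\Rightarrow$(S1) with the witness $p\in(a^-,a^+)$, $D=[0,a^-)$ inside a {\L}ukasiewicz summand with $a^->0$ --- where $\bv_{d\in D}(p\ra d)=a^-$ while $p\ra a^-=\phi^{-1}(1-\phi(p))>a^-$ --- are all exactly right, as is the identification of $(x_0,a^-)$ with $a^->0$ as the unique place where (S1) is actually used in (S1)$\Rightarrow$(S2). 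Two items you leave as ``bookkeeping'' should be written out, but both are genuinely routine, so I would not call this a gap. For the joint limit at $(x_0,a^-)$ in a product summand, a squeeze settles it in one line: for $(x,y)$ close to $(x_0,a^-)$ one has $y\leq x\ra y\leq x\ra(y\vee a^-)=\phi^{-1}\bigl(\phi(y\vee a^-)/\phi(x)\bigr)$, and both bounds tend to $a^-$ because $\phi(x)$ stays bounded away from $0$ near $x_0$; this is precisely where the product (rather than {\L}ukasiewicz) structure enters. For the remaining boundary positions, your parenthetical description is slightly garbled --- the case needing attention is the \emph{first} coordinate sitting at the top idempotent, $x_0=a^+$ (a second coordinate at the top idempotent of a lower summand is literally your case $y_0=a^-$) --- but the cases themselves are harmless for \emph{both} summand types, since $\ra_P$ and $\ra_{\L}$ are continuous at every point $(1,t)$, which is the unit-law observation you invoke; no appeal to (S1) is needed there, nor at $(x_0,0)$ when $a^-=0$. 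With those details filled in, your case analysis (both coordinates interior; $y_0=a^-$; $x_0=a^+$; both endpoints) exhausts all off-diagonal points and the cyclic argument (S1)$\Rightarrow$(S2)$\Rightarrow$(S3)$\Rightarrow$(S1) is complete.
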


We say that a continuous t-norm satisfies the \emph{condition (S)} if it satisfies one, hence all, of the equivalent conditions (S1)--(S3). Every continuous Archimedean t-norm satisfies the condition (S); the ordinal sum decomposition theorem guarantees that there exist many continuous t-norms that satisfy the condition (S)  but are not Archimedean, with the G\"{o}del t-norm being an example.

\begin{thm}\label{main1} Let $\sQ=([0,1],\&,1)$ with $\&$ being a continuous t-norm.
Then  the following statements are equivalent: \begin{enumerate}[label=\rm(\arabic*)]  \item The t-norm $\&$ satisfies the condition (S).
 \item   $\CSFQ$ is a submonad of $(\SFQ,\sfm,\sfe)$.
 \item  The triple $(\CSFQ, \sfn,\sfd)$ is a monad. \item The triple $(\SPF, \mathfrak{n},\mathfrak{d})$ is a monad.
         \end{enumerate}
     \end{thm}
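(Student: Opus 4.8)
The plan is to prove the cycle $(1)\Rightarrow(2)\Rightarrow(3)\Rightarrow(4)$ together with the contrapositive $\neg(1)\Rightarrow\neg(3)$, which closes the loop and yields all four equivalences. Two of these steps are essentially packaging. For $(2)\Rightarrow(3)$ there is nothing to do beyond Proposition \ref{submonad}, since a submonad is in particular a monad; and by the definition of submonad recalled before that proposition, $(2)$ is literally the assertion that conical $\sQ$-semifilters are closed under multiplication (the unit condition being automatic, as $\sfe_X(x)$ is conical). For $(3)\Rightarrow(4)$ I would invoke the naturally isomorphic pair $\Ga\dashv\Lam$ of (\ref{natural iso}) and transport the monad structure along it: since $\mathfrak{d}=\Ga\circ\sfd$ and $\mathfrak{n}=\Ga\circ\sfn\circ(\Lam*\Lam)$ by Proposition \ref{natural of d and n}, and $\Ga,\Lam$ are mutually inverse natural isomorphisms, the triple $(\SPF,\mathfrak{n},\mathfrak{d})$ is the literal transport of $(\CSFQ,\sfn,\sfd)$ and is therefore a monad exactly when the latter is. (The same transport gives $(4)\Rightarrow(3)$, so $(3)\Leftrightarrow(4)$ unconditionally.)

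The substance of the forward direction is $(1)\Rightarrow(2)$: assuming $(S)$, I would show that for a conical $\sQ$-semifilter $\mathbb{F}$ on $\CSFQ(X)$ the diagonal $\mathfrak{G}\coloneqq\sfm_X(\mathfrak{i}_X(\mathbb{F}))$, with $\mathfrak{G}(\lam)=\mathbb{F}(\widehat{\lam}\circ\mathfrak{i}_X)$, is again conical. By Lemma \ref{coni filt1} it suffices to check $\mathfrak{G}(\lam)=\bv\{p\mid\mathfrak{G}(p\ra\lam)=1\}$, and since the inequality $\geq$ is automatic, the task reduces to showing $\mathfrak{G}(p\ra\lam)=1$ for every $p<\mathfrak{G}(\lam)$. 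Applying Lemma \ref{coni filt1} to $\mathbb{F}$ itself, for such a $p$ there is $q$ with $p<q\leq\mathfrak{G}(\lam)$ and $\mathbb{F}\big(q\ra(\widehat{\lam}\circ\mathfrak{i}_X)\big)=1$; so by monotonicity of $\mathbb{F}$ it is enough to prove the pointwise domination
\begin{equation*}
q\ra\mathfrak{H}(\lam)\ \leq\ \mathfrak{H}(p\ra\lam)\qquad(p<q)
\end{equation*}
for every conical $\sQ$-semifilter $\mathfrak{H}$ on $X$. Writing $\mathfrak{H}(\lam)=\bv D$ with $D=\{\sub_X(\nu,\lam)\mid\nu\in\Ga_X(\mathfrak{H})\}$ directed, and using that $p\ra-$ preserves meets to get $\mathfrak{H}(p\ra\lam)=\bv_{s\in D}(p\ra s)$, this becomes a pure inequality in $\sQ$: $q\ra\bv D\leq\bv_{s\in D}(p\ra s)$ for $p<q$ and $D$ directed.

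I would dispose of this inequality by a short case analysis on $t\coloneqq\bv D$. If $t>p$ some $s\in D$ exceeds $p$, so the right-hand side is $1$; if $t<p$ then $D\subseteq[0,p)$ and $(S3)$ gives $\bv_{s\in D}(p\ra s)=p\ra t\geq q\ra t$ by antitonicity in the first variable. The only delicate case is the threshold $t=p$ with $p\notin D$, where the right-hand side equals $\bv_{s\nearrow p}(p\ra s)$ and one must bound $q\ra p$ from above. Here the structure theory enters: if $p$ is non-idempotent the claim goes through directly after rescaling its component via Theorem \ref{ordinal sum} (since $p$ is interior, the right-hand side climbs to $a^+$ while $q\ra p<a^+$), whereas if $p$ is idempotent the decisive point is that, under $(S1)$, $q\ra p=p$ for all $q>p$ whenever $p>0$ --- equivalently, there are no zero divisors immediately above a nonzero idempotent, precisely the failure that an internal {\L}ukasiewicz summand with $a^->0$ would produce. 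This is exactly the configuration $(S1)$ excludes, and it is the reason $(S)$ is the right hypothesis.

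The main obstacle is the converse $\neg(1)\Rightarrow\neg(3)$. If $(S)$ fails, $(S1)$ supplies a nonzero idempotent $p=a^-$ with $([a^-,a^+],\with,a^+)\cong([0,1],\with_{\L},1)$, so there is $q>p$ with $q\ra p>p$; by the threshold analysis above this is exactly an obstruction to $\mathfrak{G}$ being conical, i.e.\ conical $\sQ$-semifilters are \emph{not} closed under multiplication. Turning this failure of closure into a genuine violation of a monad law is the hard part: the induced $\sfn=\mathfrak{c}\circ\sfm\circ(\mathfrak{i}*\mathfrak{i})$ still satisfies both unit laws (point $\sQ$-filters are conical and are fixed by $\mathfrak{c}$), so I expect associativity to break. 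Concretely I would build, over a small set $X$ tailored to the component $[a^-,a^+]$, a second-order conical datum on $\CSFQ(\CSFQ(X))$ for which the two bracketings $\sfn\circ\sfn_{\CSFQ}$ and $\sfn\circ\CSFQ(\sfn)$ disagree at a test function valued in $(p,q)$; the discrepancy arises precisely because the coreflection $\mathfrak{c}$ must correct the non-conical diagonal produced in one order but not in the other. Exhibiting an explicit $\lam$ on which the strict inequality $q\ra p>p$ propagates to a strict inequality between the two contractions is the one computation I expect to require genuine care, and it is where the quantitative content of $q\ra p>p$ is used.
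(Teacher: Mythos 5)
Your treatment of $(2)\Rightarrow(3)$ and $(3)\Leftrightarrow(4)$ matches the paper, and your $(1)\Rightarrow(2)$ is a correct alternative to the paper's route: where the paper reduces closure under multiplication to closure under arbitrary meets and under $p\ra(-)$ (Propositions \ref{criterion} and \ref{CSF is closed under meets}) and then exhibits $p\ra\mathfrak{F}$ as $\Lambda_X(G)$ for the explicit prefilter $G=\{\lam\mid p\leq\mathfrak{F}(\lam)\}$, you verify conicality of the diagonal directly from Lemmas \ref{coni filt1} and \ref{coni filt2} via the pointwise inequality $q\ra\mathfrak{H}(\lam)\leq\mathfrak{H}(p\ra\lam)$ for $p<q$. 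Be aware that the paper's two-case split ($p\leq\mathfrak{F}(\lam)$ versus $p>\mathfrak{F}(\lam)$) avoids your delicate threshold case $\bv D=p$ entirely, since when $p>\mathfrak{F}(\lam)$ every term of the directed join lies in $[0,p)$ and (S3) applies outright; your route instead leans on the ordinal-sum analysis and on the fact that $q\ra p=p$ for every nonzero idempotent $p$ and $q>p$ under (S1), which you assert correctly but would still have to prove (it needs the absence of zero divisors in a product-type component sitting immediately above $p$).

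The genuine gap is $(3)\Rightarrow(1)$. What you offer there is a plan, not a proof: ``I would build \dots I expect associativity to break \dots the one computation I expect to require genuine care.'' That computation is the mathematical heart of this direction and cannot be deferred, precisely for the reason you yourself identify: failure of closure under multiplication ($\neg(2)$) does not refute $(3)$, because $\sfn$ is defined with the coreflection $\mathfrak{c}$ already applied and both unit laws hold unconditionally, so one must exhibit an explicit violation of associativity (equivalently, of the Kleisli identity $g^\sharp\circ f^\sharp=(g^\sharp\circ f)^\sharp$). The paper supplies exactly this witness: assuming idempotents $0<p<q$ with $([p,q],\with,q)$ of {\L}ukasiewicz type, it picks $t,s\in(p,q)$ with $t\with s=p$, takes $X=[0,1]$, lets $f,g:X\lra\CSFQ(X)$ be the constant maps at the conical semifilters $\mathfrak{F}$ (generated by $\{\nu\mid\nu\geq t_X\}$) and $\mathfrak{G}$ (generated by $\{\nu\mid \exists n,\ \nu\geq 1_{A_n}\}$ with $A_n=\{1/m\mid m\geq n\}$), and tests against $\mathfrak{H}$ (generated by $\{\nu\mid\nu\geq s_X\}$) and $\gamma(x)=p(1-x)$; a two-step computation then gives $g^\sharp\circ f^\sharp(\mathfrak{H})(\gamma)=1$ while $(g^\sharp\circ f)^\sharp(\mathfrak{H})(\gamma)\leq p$. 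The choice of $\mathfrak{G}$ concentrated on the convergent sequence $\{1/m\}$ together with a test function vanishing at its limit is what forces the coreflection to act differently in the two bracketings; without constructing such a witness, your argument for $\neg(1)\Rightarrow\neg(3)$ is incomplete, and with it the equivalence of (1) with (3) and (4) remains unproved.
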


We make some preparations first.

\begin{lem}\label{CSF is closed under directed joins}
If $\{\mathfrak{F}_i\}_i$ is a directed set of conical $\sQ$-semifilters on $X$, then so is $\bv_i\mathfrak{F}_i$. \end{lem}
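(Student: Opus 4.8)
The plan is to write $\mathfrak{G}\coloneqq\bv_i\mathfrak{F}_i$ for the pointwise join, so that $\mathfrak{G}(\lam)=\bv_i\mathfrak{F}_i(\lam)$ for every $\lam\in\sQ^X$, and to argue in two stages: first that $\mathfrak{G}$ is a $\sQ$-semifilter at all, and then that it is conical. Conditions \ref{FF1} and \ref{FF3} are immediate and require neither directedness nor meet continuity: \ref{FF1} holds since $\mathfrak{G}(k_X)=\bv_i\mathfrak{F}_i(k_X)\geq k$, and \ref{FF3} holds because $\with$ distributes over joins, so $\sub_X(\lam,\mu)\with\mathfrak{G}(\lam)=\bv_i\big(\sub_X(\lam,\mu)\with\mathfrak{F}_i(\lam)\big)\leq\bv_i\mathfrak{F}_i(\mu)=\mathfrak{G}(\mu)$.

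The main obstacle is condition \ref{FF2}, and this is exactly where both hypotheses enter. I would first use meet continuity of $\sQ$ to distribute the binary meet over the two directed joins $\{\mathfrak{F}_i(\lam)\}_i$ and $\{\mathfrak{F}_j(\mu)\}_j$ (these are directed because evaluation at a fixed argument is monotone), obtaining $\mathfrak{G}(\lam)\wedge\mathfrak{G}(\mu)=\bv_i\bv_j\big(\mathfrak{F}_i(\lam)\wedge\mathfrak{F}_j(\mu)\big)$. Then I would invoke directedness of $\{\mathfrak{F}_i\}_i$: for each pair $(i,j)$ choose $\ell$ with $\mathfrak{F}_i,\mathfrak{F}_j\leq\mathfrak{F}_\ell$, whence $\mathfrak{F}_i(\lam)\wedge\mathfrak{F}_j(\mu)\leq\mathfrak{F}_\ell(\lam)\wedge\mathfrak{F}_\ell(\mu)\leq\mathfrak{F}_\ell(\lam\wedge\mu)\leq\mathfrak{G}(\lam\wedge\mu)$ by \ref{FF2} for $\mathfrak{F}_\ell$. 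Taking the join over all pairs yields $\mathfrak{G}(\lam)\wedge\mathfrak{G}(\mu)\leq\mathfrak{G}(\lam\wedge\mu)$. Neither step survives for a non-directed family, which is precisely why an arbitrary join of conical $\sQ$-semifilters need not even be a $\sQ$-semifilter.

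For conicality I would rely on the Galois connection $\Lambda_X\dashv\Gamma_X$ rather than on Lemma \ref{coni filt1}. Since $\Lambda_X\Gamma_X(\mathfrak{G})\leq\mathfrak{G}$ is automatic (the counit of the adjunction), it suffices to prove the reverse inequality. For each $i$ the inequality $\mathfrak{F}_i\leq\mathfrak{G}$ gives $\Gamma_X(\mathfrak{F}_i)\subseteq\Gamma_X(\mathfrak{G})$ by monotonicity of $\Gamma_X$, and hence, using conicality of $\mathfrak{F}_i$, $\mathfrak{F}_i=\Lambda_X\Gamma_X(\mathfrak{F}_i)\leq\Lambda_X\Gamma_X(\mathfrak{G})$. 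Joining over $i$ gives $\mathfrak{G}\leq\Lambda_X\Gamma_X(\mathfrak{G})$, so $\mathfrak{G}=\Lambda_X\Gamma_X(\mathfrak{G})$ is conical. I note that this last stage uses only that each $\mathfrak{F}_i$ is conical together with the fact that $\mathfrak{G}$ is already a $\sQ$-semifilter; directedness is needed solely to secure the latter. The whole argument stays within the standing meet-continuity assumption and does not invoke condition (S).
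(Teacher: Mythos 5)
Your proof is correct, but it takes a different route from the paper's. The paper's proof is a two-line argument on the prefilter side of the adjunction: setting $F=\bigcup_i\Gamma_X(\mathfrak{F}_i)$, directedness makes $F$ a prefilter (closure under binary meets follows by passing to an upper bound $\mathfrak{F}_\ell$ of $\mathfrak{F}_i,\mathfrak{F}_j$), and conicality of each $\mathfrak{F}_i$ gives $\Lambda_X(F)=\bv_i\Lambda_X\Gamma_X(\mathfrak{F}_i)=\bv_i\mathfrak{F}_i$; since, under the standing meet-continuity convention, $\Lambda_X$ of \emph{any} prefilter is a $\sQ$-semifilter, both semifilterhood and conicality of $\bv_i\mathfrak{F}_i$ are obtained at once from the definition of conical. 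You instead work on the semifilter side: you verify (F1)--(F3) for the pointwise join by hand, with meet continuity and directedness entering exactly in (F2), and then deduce conicality from the counit of $\Lambda_X\dashv\Gamma_X$ together with monotonicity and conicality of the $\mathfrak{F}_i$. The underlying ingredients are necessarily the same (directedness for binary meets, conicality of the members, meet continuity of $\sQ$), but your decomposition makes explicit where each hypothesis is used and why non-directed joins fail, at the cost of redoing the axiom checks that the paper's proof inherits from the earlier observation that $\Lambda_X(F)$ is always a $\sQ$-semifilter; the paper's version buys brevity by routing everything through that one fact.
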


\begin{proof} Let $F=\bigcup_i\Gamma_X(\mathfrak{F}_i)$. Then  $F$ is a prefilter and $\Lambda_X(F)=\bv_i\mathfrak{F}_i$. \end{proof}

\begin{prop}\label{criterion} Conical $\sQ$-semifilters  are closed under multiplication if and only if the following conditions are satisfied: \begin{enumerate}[label=\rm(\arabic*)] \item If $\{\mathfrak{F}_i\}_i$ is a set of conical $\sQ$-semifilters on $X$, then so is $\bw_i\mathfrak{F}_i$.
\item If $\mathfrak{F} $ is a conical $\sQ$-semifilter   on $X$, then so is $p\ra\mathfrak{F} $ for each $p\in\sQ$. \end{enumerate}
\end{prop}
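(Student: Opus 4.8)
The plan is to prove the two implications separately, using the explicit formula for the multiplication: for a conical $\sQ$-semifilter $\mathbb F$ on $\CSFQ(X)$ one has $\sfm_X(\mathfrak i_X(\mathbb F))(\lam)=\mathbb F(\widehat\lam\circ\mathfrak i_X)$, and I shall write $\check\lam\coloneqq\widehat\lam\circ\mathfrak i_X$, so that $\check\lam(\mathfrak H)=\mathfrak H(\lam)$ for $\mathfrak H\in\CSFQ(X)$. For the \emph{sufficiency} direction the idea is to display $\sfm_X(\mathfrak i_X(\mathbb F))$ as a directed join of conical $\sQ$-semifilters and then invoke Lemma \ref{CSF is closed under directed joins}; for the \emph{necessity} direction the idea is to realize both $\bw_i\mathfrak F_i$ and $p\ra\mathfrak F$ as diagonal $\sQ$-semifilters of suitable conical $\mathbb F$'s produced directly by $\Lambda$, so that closure under multiplication forces them to be conical.

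For sufficiency, I assume (1) and (2). Since $\mathbb F$ is conical, $\mathbb F=\Lambda_{\CSFQ(X)}(\mathbb G_0)$ with $\mathbb G_0=\Gamma_{\CSFQ(X)}(\mathbb F)$ a prefilter on $\CSFQ(X)$. Expanding $\Lambda_{\CSFQ(X)}$ and $\sub_{\CSFQ(X)}$ gives
\[
\sfm_X(\mathfrak i_X(\mathbb F))(\lam)=\mathbb F(\check\lam)=\bv_{\Theta\in\mathbb G_0}\;\bw_{\mathfrak H\in\CSFQ(X)}\big(\Theta(\mathfrak H)\ra\mathfrak H(\lam)\big).
\]
Writing $\mathfrak G_\Theta\coloneqq\bw_{\mathfrak H\in\CSFQ(X)}\big(\Theta(\mathfrak H)\ra\mathfrak H\big)$ for the pointwise meet of the $\sQ$-semifilters $\lam\mapsto\Theta(\mathfrak H)\ra\mathfrak H(\lam)$, each factor $\Theta(\mathfrak H)\ra\mathfrak H$ is conical by (2) (take $p=\Theta(\mathfrak H)$ and note $\mathfrak H$ is conical), so $\mathfrak G_\Theta$ is conical by (1); moreover $\sfm_X(\mathfrak i_X(\mathbb F))=\bv_{\Theta\in\mathbb G_0}\mathfrak G_\Theta$. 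The crucial observation is that this join is \emph{directed}: since $\mathbb G_0$ is a prefilter it is closed under finite meets, and $\Theta\mapsto\mathfrak G_\Theta$ is antitone (because $\ra$ is antitone in its first variable), so $\mathfrak G_{\Theta_1\wedge\Theta_2}\ge\mathfrak G_{\Theta_1}\vee\mathfrak G_{\Theta_2}$. Hence $\sfm_X(\mathfrak i_X(\mathbb F))$ is a directed join of conical $\sQ$-semifilters and is conical by Lemma \ref{CSF is closed under directed joins}; as $\mathbb F$ was arbitrary, conical $\sQ$-semifilters are closed under multiplication.

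For necessity, I assume closure under multiplication. For (1), given conical $\{\mathfrak F_i\}_i$ on $X$, I set $\mathbb G\coloneqq\{\Phi\in\sQ^{\CSFQ(X)}\mid\Phi(\mathfrak F_i)\ge k\text{ for all }i\}$, which is readily checked to be a prefilter on $\CSFQ(X)$, and put $\mathbb F\coloneqq\Lambda_{\CSFQ(X)}(\mathbb G)$, a conical $\sQ$-semifilter. A direct computation of $\mathbb F(\check\lam)$---using the test function taking value $k$ on $\{\mathfrak F_i\}_i$ and $0$ elsewhere for the lower bound, and $\Phi(\mathfrak F_i)\ge k$ together with antitonicity of $\ra$ for the upper bound---yields $\sfm_X(\mathfrak i_X(\mathbb F))(\lam)=\bw_i\mathfrak F_i(\lam)$, so $\bw_i\mathfrak F_i$ is conical. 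For (2), given conical $\mathfrak F$ and $p\in\sQ$ (with $p\le k$, which is automatic in the integral case that concerns Theorem \ref{main1}), I let $g\in\sQ^{\CSFQ(X)}$ be $p$ at $\mathfrak F$ and $0$ elsewhere and take the principal prefilter $\mathbb G\coloneqq\{\Phi\mid\Phi\ge g\}$; then $\mathbb F\coloneqq\Lambda_{\CSFQ(X)}(\mathbb G)$ is conical and $\sfm_X(\mathfrak i_X(\mathbb F))(\lam)=\sub_{\CSFQ(X)}(g,\check\lam)=p\ra\mathfrak F(\lam)$, so closure forces $p\ra\mathfrak F$ to be conical.

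I expect the main obstacle to be the sufficiency direction, specifically arriving at the decomposition $\sfm_X(\mathfrak i_X(\mathbb F))=\bv_{\Theta}\mathfrak G_\Theta$ and recognizing that its two layers are governed precisely by the two hypotheses---the inner meet by (1) together with (2), and the outer join by the directedness coming from $\mathbb G_0$ being closed under finite meets, which is exactly what lets Lemma \ref{CSF is closed under directed joins} apply. The necessity computations are routine once the correct prefilters are written down; the only point needing care there is the prefilter bookkeeping for (2), i.e.\ ensuring $g\le k_{\CSFQ(X)}$ (equivalently $p\le k$), which is harmless for the integral quantales relevant to Theorem \ref{main1}.
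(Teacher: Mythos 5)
Your proof is correct and takes essentially the same route as the paper's: the same decomposition $\sfm_X(\mathfrak{i}_X(\mathbb{F}))=\bv_{\Theta\in\Gamma_{\CSFQ(X)}(\mathbb{F})}\bw_{\mathfrak{H}\in\CSFQ(X)}\big(\Theta(\mathfrak{H})\ra\mathfrak{H}\big)$ for sufficiency (with directedness coming from the prefilter's closure under finite meets, then Lemma \ref{CSF is closed under directed joins}), and the same test prefilters for necessity, your $\{\Phi\mid\Phi(\mathfrak{F}_i)\geq k\ \forall i\}$ and principal prefilter $\{\Phi\mid\Phi\geq g\}$ being the paper's $\{\xi\mid\xi(\mathfrak{F}_i)=1\ \forall i\}$ and $\{\xi\mid\xi(\mathfrak{F})\geq p\}$ up to immaterial differences in the integral setting. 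The caveat $p\leq k$ you flag for necessity of (2) is equally implicit in the paper's own argument (its prefilter must contain $k_{\CSFQ(X)}$), so your proposal loses nothing relative to the original.
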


\begin{proof} \textbf{Sufficiency}. If we can show that for each conical $\sQ$-semifilter $\mathbb{F}$    on the set $\CSFQ(X)$,  \[\sfm_X\circ(\mathfrak{i}*\mathfrak{i})_X(\mathbb{F})= \bv_{\mathbb{F}(\xi)\geq k} \bw_{\mathfrak{G}\in \CSFQ(X)}(\xi(\mathfrak{G})\ra \mathfrak{G}),\] then the conclusion follows immediately since each directed join of conical $\sQ$-semifilters is conical.

 In fact, for each $\lam\in \sQ^X$, \begin{align*} \sfm_X\circ(\mathfrak{i}*\mathfrak{i})_X (\mathbb{F})(\lam)&= \mathbb{F}(\widehat{\lam}\circ \mathfrak{i}_X) \\ &= \bv_{\mathbb{F}(\xi)\geq k}\sub_{\CSFQ(X)}(\xi,\widehat{\lam}\circ \mathfrak{i}_X) \quad~~ (\mathbb{F}~\text{is conical})\\
 &= \bv_{\mathbb{F}(\xi)\geq k} \bw_{\mathfrak{G}\in \CSFQ(X)}(\xi(\mathfrak{G})\ra \mathfrak{G})(\lam).\end{align*}

\textbf{Necessity}. Let  $F$ be the prefilter on $\CSFQ(X)$ consisting of maps $\xi:\CSFQ(X)\lra\sQ$ with $\xi(\mathfrak{F}_i)= 1$ for all $i$  and let $\mathbb{F}=\Lambda_X(F)$.   Then, for each $\lam\in\sQ^X$, \begin{align*}\sfm_X\circ(\mathfrak{i}*\mathfrak{i})_X (\mathbb{F})(\lam)= \mathbb{F}(\widehat{\lam}\circ \mathfrak{i}_X) =\bv_{\xi\in F}\sub_{\CSFQ(X)}(\xi,\widehat{\lam}\circ \mathfrak{i}_X)= \bw_i\widehat{\lam}(\mathfrak{F}_i) =\bw_i\mathfrak{F}_i(\lam), \end{align*} which shows that $\bw_i\mathfrak{F}_i=\sfm_X\circ(\mathfrak{i}*\mathfrak{i})_X(\mathbb{F})$, so $\bw_i\mathfrak{F}_i$ is conical. This proves (1).

As for (2),  assume that $\mathfrak{F}$ is a conical $\sQ$-semifilter on $X$ and $p\in\sQ$. Let $F$ be the prefilter on $\CSFQ(X)$ consisting of maps $\xi:\CSFQ(X)\lra\sQ$ with $\xi(\mathfrak{F})\geq p$  and let $\mathbb{F}=\Lambda_X(F)$. Then, for each $\lam\in\sQ^X$, \[\sfm_X\circ(\mathfrak{i}*\mathfrak{i})_X(\mathbb{F})(\lam)= \mathbb{F}(\widehat{\lam}\circ\mathfrak{i}_X)=\bv_{\xi\in F}\sub_{\CSFQ(X)}(\xi,\widehat{\lam}\circ\mathfrak{i}_X)=p\ra \mathfrak{F}(\lam).\] Therefore, $p\ra \mathfrak{F}=\sfm_X\circ(\mathfrak{i}*\mathfrak{i})_X(\mathbb{F})$ and is, consequently, conical. \end{proof}

\begin{lem}\label{coni filt2}Let $\sQ$ be a continuous quantale. Then, a $\sQ$-semifilter $\mathfrak{F}$    is conical if and only if   $\frak{F}(p\ra\lam)\geq k$ whenever $p\ll\frak{F}(\lam)$. \end{lem}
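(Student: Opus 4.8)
The plan is to deduce the statement from the characterization of conicality already recorded in Lemma \ref{coni filt1}, which says that $\mathfrak{F}$ is conical precisely when $\mathfrak{F}(\lam)=\bv S(\lam)$ for every $\lam\in\sQ^X$, where I write $S(\lam)=\{p\in\sQ\mid \mathfrak{F}(p\ra\lam)\geq k\}$. The first observation, valid for \emph{any} $\sQ$-semifilter whether conical or not, is that the inequality $\bv S(\lam)\leq\mathfrak{F}(\lam)$ always holds: if $p\in S(\lam)$ then $p\leq\sub_X(p\ra\lam,\lam)\leq\mathfrak{F}(p\ra\lam)\ra\mathfrak{F}(\lam)\leq\mathfrak{F}(\lam)$, exactly as in the necessity part of the proof of Lemma \ref{coni filt1}. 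Consequently conicality is equivalent to the reverse inequality $\mathfrak{F}(\lam)\leq\bv S(\lam)$ for all $\lam$, and both directions of the present lemma reduce to comparing this with the stated condition $\thda\mathfrak{F}(\lam)\subseteq S(\lam)$, i.e.\ $\mathfrak{F}(p\ra\lam)\geq k$ whenever $p\ll\mathfrak{F}(\lam)$.

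The key structural fact I would establish next is that each $S(\lam)$ is a directed down-set. It is a down-set because $\mathfrak{F}$ is monotone and $p\mapsto p\ra\lam$ is antitone: if $q\leq p\in S(\lam)$ then $p\ra\lam\leq q\ra\lam$, so $\mathfrak{F}(q\ra\lam)\geq\mathfrak{F}(p\ra\lam)\geq k$. It is nonempty since $0\in S(\lam)$, as $0\ra\lam$ is the top constant map and $\mathfrak{F}$ preserves order with $\mathfrak{F}(k_X)\geq k$. For directedness it suffices to check closure under binary joins: using the quantale identity $(p\vee q)\ra r=(p\ra r)\wedge(q\ra r)$ pointwise, one gets $(p\vee q)\ra\lam=(p\ra\lam)\wedge(q\ra\lam)$, whence \ref{FF2} gives $\mathfrak{F}((p\vee q)\ra\lam)\geq\mathfrak{F}(p\ra\lam)\wedge\mathfrak{F}(q\ra\lam)\geq k$, so $p\vee q\in S(\lam)$.

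With this in hand both implications are short. For sufficiency, assume $\thda\mathfrak{F}(\lam)\subseteq S(\lam)$; since $\sQ$ is continuous, $\mathfrak{F}(\lam)=\bv\thda\mathfrak{F}(\lam)\leq\bv S(\lam)$, so $\mathfrak{F}$ is conical by Lemma \ref{coni filt1}. For necessity, assume $\mathfrak{F}$ is conical, so $\mathfrak{F}(\lam)=\bv S(\lam)$ with $S(\lam)$ directed; if $p\ll\mathfrak{F}(\lam)=\bv S(\lam)$, then the very definition of the way-below relation, applied to the directed set $S(\lam)$, yields $p\leq q$ for some $q\in S(\lam)$, and then $p\in S(\lam)$ because $S(\lam)$ is a down-set, that is $\mathfrak{F}(p\ra\lam)\geq k$. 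The only step carrying genuine content is the directedness of $S(\lam)$: it is precisely what licenses invoking the way-below relation against $\bv S(\lam)$ in the necessity direction, and without it that direction would fail. Everything else is bookkeeping combining Lemma \ref{coni filt1} with the continuity hypothesis $\mathfrak{F}(\lam)=\bv\thda\mathfrak{F}(\lam)$.
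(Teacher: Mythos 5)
Your proof is correct, and for the direction the paper actually proves it is essentially the same computation, but your organization is genuinely different and fills a gap the paper leaves silent. The paper's proof handles only sufficiency: it verifies $\mathfrak{F}\leq\Lambda_X\circ\Gamma_X(\mathfrak{F})$ directly from the adjunction, noting that $p\ll\mathfrak{F}(\lam)$ forces $p\ra\lam\in\Gamma_X(\mathfrak{F})$ and hence $p\leq \sub_X(p\ra\lam,\lam)\leq \Lambda_X\circ\Gamma_X(\mathfrak{F})(\lam)$, with continuity of $\sQ$ closing the argument; once you unfold your appeal to Lemma \ref{coni filt1}, your sufficiency step is this same chain. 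The necessity direction, however, is simply not addressed in the paper, and it is not vacuous: as you correctly emphasize, extracting an element below $p$ from $p\ll\bv S(\lam)$ is only licensed when the join is directed. Your proof that $S(\lam)=\{q\in\sQ\mid\mathfrak{F}(q\ra\lam)\geq k\}$ is a directed down-set --- directedness via $(p\vee q)\ra\lam=(p\ra\lam)\wedge(q\ra\lam)$ together with \ref{FF2} --- is exactly the missing content, and it is sound. An alternative, closer in spirit to how the paper manipulates conical semifilters elsewhere, would be to use that for conical $\mathfrak{F}$ the join $\mathfrak{F}(\lam)=\bv_{\nu\in\Gamma_X(\mathfrak{F})}\sub_X(\nu,\lam)$ is directed because the prefilter $\Gamma_X(\mathfrak{F})$ is closed under binary meets; then $p\ll\mathfrak{F}(\lam)$ yields some $\nu\in\Gamma_X(\mathfrak{F})$ with $p\leq\sub_X(\nu,\lam)$, i.e.\ $\nu\leq p\ra\lam$, and monotonicity of $\mathfrak{F}$ gives $\mathfrak{F}(p\ra\lam)\geq k$. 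The paper's treatment is shorter but incomplete as written; yours is self-contained and makes explicit the single point of genuine content, namely where directedness is consumed by the way-below relation.
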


\begin{proof}    It suffices to check that $\frak{F}\leq\Lambda\circ\Gamma(\frak{F})$. If $p\ll\frak{F}(\lam)$,  then $\frak{F}(p\ra\lam)\geq k$ by assumption. Hence, $p\ra\lam\in\Gamma(\frak{F})$, so, $p\leq \sub_X(p\ra\lam,\lam)\leq\Lambda\circ\Gamma(\frak{F})(\lam)$, and consequently, $\frak{F}(\lam)\leq\Lambda\circ\Gamma(\frak{F})(\lam)$. \end{proof}

\begin{prop}\label{CSF is closed under meets} Let $\sQ$ be a continuous quantale. If each member of  $\{\mathfrak{F}_i\}_i$ is a   conical $\sQ$-semifilter  on a set $X$, then so is the meet $\bw_i\mathfrak{F}_i$.  \end{prop}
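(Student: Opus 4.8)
The plan is to reduce everything to the way-below characterization of conicality supplied by Lemma \ref{coni filt2}, which is available precisely because $\sQ$ is continuous. Write $\mathfrak{F}=\bw_i\mathfrak{F}_i$ for the pointwise meet (this is the meet in the poset of $\sQ$-semifilters, since the pointwise meet of a family of $\sQ$-semifilters is again one and is clearly their greatest lower bound). First I would record that $\mathfrak{F}$ is a $\sQ$-semifilter at all: \ref{FF1} holds since $\mathfrak{F}(k_X)=\bw_i\mathfrak{F}_i(k_X)\geq k$; \ref{FF2} follows from $\mathfrak{F}(\lam)\wedge\mathfrak{F}(\mu)\leq\mathfrak{F}_i(\lam)\wedge\mathfrak{F}_i(\mu)\leq\mathfrak{F}_i(\lam\wedge\mu)$ on taking the meet over $i$; and \ref{FF3}, in its equivalent form $\sub_X(\lam,\mu)\with\mathfrak{F}_i(\lam)\leq\mathfrak{F}_i(\mu)$, gives $\sub_X(\lam,\mu)\with\mathfrak{F}(\lam)\leq\mathfrak{F}_i(\mu)$ using $\mathfrak{F}(\lam)\leq\mathfrak{F}_i(\lam)$, and hence $\sub_X(\lam,\mu)\with\mathfrak{F}(\lam)\leq\mathfrak{F}(\mu)$ after meeting over $i$. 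This part is routine and uses no continuity.

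For the substance, by Lemma \ref{coni filt2} it suffices to show that $\mathfrak{F}(p\ra\lam)\geq k$ whenever $p\ll\mathfrak{F}(\lam)$. The key observation is that the way-below relation propagates upward along the order: if $p\ll a$ and $a\leq b$, then $p\ll b$, which is immediate from the definition of $\ll$. Applying this with $a=\mathfrak{F}(\lam)$ and $b=\mathfrak{F}_i(\lam)$ (legitimate since $\mathfrak{F}(\lam)=\bw_j\mathfrak{F}_j(\lam)\leq\mathfrak{F}_i(\lam)$), I obtain $p\ll\mathfrak{F}_i(\lam)$ for every index $i$. Since each $\mathfrak{F}_i$ is conical, Lemma \ref{coni filt2} gives $\mathfrak{F}_i(p\ra\lam)\geq k$ for every $i$, and taking the meet over $i$ yields $\mathfrak{F}(p\ra\lam)=\bw_i\mathfrak{F}_i(p\ra\lam)\geq k$, as required. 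Lemma \ref{coni filt2} then concludes that $\mathfrak{F}$ is conical.

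The only delicate point—and the reason continuity is assumed rather than just meet-continuity—is that the naive route through Lemma \ref{coni filt1}, namely $\mathfrak{F}(\lam)=\bv\{p\mid\mathfrak{F}(p\ra\lam)\geq k\}$, does not survive the passage to a meet, because a supremum over $p$ need not commute with the infimum over $i$. Switching to the way-below formulation of Lemma \ref{coni filt2} is exactly what circumvents this: the defining condition is tested one $p$ at a time, and the propagation $p\ll\mathfrak{F}(\lam)\Rightarrow p\ll\mathfrak{F}_i(\lam)$ transfers the conicality of the individual $\mathfrak{F}_i$ to $\mathfrak{F}$ without any interchange of joins and meets. I expect the verification that $\ll$ is preserved under enlarging the right-hand element to be the only step needing explicit mention, although it is elementary; the rest is bookkeeping with the pointwise meet.
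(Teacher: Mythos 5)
Your proof is correct and follows essentially the same route as the paper: both reduce the claim to the way-below criterion of Lemma \ref{coni filt2}, use the fact that $p\ll\bw_i\mathfrak{F}_i(\lam)\leq\mathfrak{F}_i(\lam)$ implies $p\ll\mathfrak{F}_i(\lam)$, and then pass the inequality $\mathfrak{F}_i(p\ra\lam)\geq k$ through the meet. Your additional verification that the pointwise meet is a $\sQ$-semifilter (left implicit in the paper) and your remark on why Lemma \ref{coni filt1} would not suffice are sound but not essential to the argument.
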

\begin{proof} If $p\ll\bw_i\mathfrak{F}_i(\lam)$, then for each $i$, $p\ll\mathfrak{F}_i(\lam)$, hence $\mathfrak{F}_i(p\ra\lam)\geq k$, and consequently, $\bw_i\mathfrak{F}_i(p\ra\lam)\geq k$. Therefore, the conclusion holds by Lemma \ref{coni filt2}. \end{proof}

\begin{proof}[Proof of Theorem \ref{main1}] That $(3)\Leftrightarrow(4)$ is clear. We prove that $(1)\Rightarrow(2)\Rightarrow(3)\Rightarrow(1)$.

$(1)\Rightarrow(2)$ Since $([0,1],\&,1)$ is a continuous quantale, by Propositions  \ref{criterion} and   \ref{CSF is closed under meets}, it suffices to prove that for each conical $\sQ$-semifilter $\mathfrak{F}:[0,1]^X\lra [0,1]$ and each $p\in[0,1]$,   $p\ra \mathfrak{F}$  is   conical. Let \[G=\{\lam\in [0,1]^X\mid p\leq \mathfrak{F}(\lam)\}.\] Then $G$ is    a prefilter on $X$. We show  in two steps that \[p\ra\mathfrak{F} =\Lambda_X(G),\] which implies that $p\ra\mathfrak{F}$ is  conical.

\textbf{Step 1}. $\Lambda_X(G)\leq p\ra\mathfrak{F}$.

For each $\lam\in G$, since $\mathfrak{F}(\lam)\geq p$, it follows that for all $\mu\in[0,1]^X$, \[\sub_X(\lam,\mu)\leq \mathfrak{F}(\lam)\ra \mathfrak{F}(\mu)\leq p\ra \mathfrak{F}(\mu),\]  hence $\sub_X(\lam,-)\leq p\ra\mathfrak{F}$ and consequently, \[\Lambda_X(G)=\bv_{\lam\in G}\sub_X(\lam,-)\leq p\ra\mathfrak{F}.\]

\textbf{Step 2}.   $p\ra\mathfrak{F} \leq\Lambda_X(G)$.

First, we claim that if $\lam \in \Ga_X(\mathfrak{F})$  then  $p\with\lam \in G$. In fact, if $\mathfrak{F}(\lam)=1$, then \[p\leq\sub_X(\lam,p\with\lam)\leq \mathfrak{F}(\lam)\ra\mathfrak{F}(p\with\lam)= \mathfrak{F} (p\with\lam),\] hence  $p\with\lam \in G$.

Now we check that $p\ra\mathfrak{F}(\lam)\leq\Lambda_X(G)(\lam)$ for all $\lam\in[0,1]^X$.

If $p\leq\mathfrak{F}(\lam)$, then  $\lam\in G$ and consequently, \[p\ra\mathfrak{F}(\lam)\leq1=\sub_X(\lam, \lam)\leq \bv_{\mu\in G} \sub_X(\mu,\lam)= \Lambda_X(G)(\lam).\]
If  $p>\mathfrak{F}(\lam)$, then \begin{align*}p\ra\mathfrak{F}(\lam)& =p\ra\bv_{\mu\in\Gamma_X(\mathfrak{F})}\sub_X(\mu,\lam) &\text{($\mathfrak{F}$ is conical)}\\ &
 =\bv_{\mu\in\Gamma_X(\mathfrak{F})}(p\ra\sub_X(\mu,\lam)) &\text{(condition (S))}
 \\ &
 =\bv_{\mu\in\Gamma_X(\mathfrak{F})} \sub_X(p\with\mu,\lam)  \\ &
 \leq\bv_{\gamma\in G} \sub_X(\gamma,\lam)\\ &
 =\Lambda_X(G)(\lam).
\end{align*}

$(2)\Rightarrow(3)$ Clear by definition.

$(3)\Rightarrow(1)$
For each map $h:X\lra\CSFQ(Y)$, let $h^\sharp $ be the composite \[  \CSFQ(X)\to^{\CSFQ(h)}\CSFQ^2(Y)\to^{\sfn_Y}\CSFQ(Y). \] Since $(\CSFQ,\sfn,\sfd)$ is a monad, it follows that \[  g^\sharp \circ f^\sharp = (g^\sharp \circ f)^\sharp   \] for any $f:X\lra\CSFQ(Y)$ and $g:Y\lra\CSFQ(Z)$, see e.g. \cite{Manes1976,Manes2003}.

In the following we derive a contradiction if $ \& $ does not satisfy the condition (S). Suppose that $p,q\in[0,1]$ are idempotent elements of $\&$ such that $0<p<q$  and that the restriction of $ \& $ on $[p,q]$ is isomorphic to the {\L}ukasiewicz t-norm.
Pick  $t,s\in(p,q)$ with $t\with s=p$.

Let $X=[0,1]$.
Consider the constant maps \[f:X\lra\CSFQ(X) \quad\text{and}\quad g:X\lra\CSFQ(X)  \] given as follows: $f$ sends every $x$ to   the conical $\sQ$-semifilter  $\mathfrak{F}$ generated by the prefilter \[\{\nu:X\lra[0,1]\mid \nu \geq t_X   \};\]  $g$ sends every $x$ to the conical $\sQ$-semifilter  $\mathfrak{G}$ on $X$ generated by the prefilter \[\{\nu:X\lra[0,1]\mid \exists n\geq 1, \nu\geq 1_{A_n}\}, \] where $A_n=\{ 1/m \mid m\geq n\}$. By definition, for all $\mu\in[0,1]^X$, \[f(x)(\mu)=\mathfrak{F}(\mu)= t\ra\bw_{y\in X}\mu(y)   \]   and   $$ g(x)(\mu)=\mathfrak{G}(\mu)  = \bigvee_{n\geq 1}\bw_{m\geq n}\mu(1/m).$$

We claim that $g^\sharp \circ f^\sharp \not= (g^\sharp \circ f)^\sharp$, contradicting that $(\CSFQ,\sfn,\sfd)$ is a monad. To see this, let $\gamma(x)=p(1-x)$; let $\mathfrak{H}$ be the conical $\sQ$-semifilter    generated by the prefilter \[\{\nu:X\lra[0,1]\mid \nu \geq s_X  \},\] that is to say, \[\mathfrak{H}(\mu)=s\ra \bw_{y\in X}\mu(y).\] In the following we show in two steps that $g^\sharp \circ f^\sharp (\mathfrak{H})(\gamma)$ is not equal to $(g^\sharp \circ f)^\sharp(\mathfrak{H})(\gamma)$.

\textbf{Step 1}.   $g^\sharp \circ f^\sharp (\mathfrak{H})(\gamma)=1$.

Since \begin{align*} \sfm_X(f(\mathfrak{H}))(\lam)
&= f(\mathfrak{H})(\widehat{\lam}) \\ &= \mathfrak{H}(\widehat{\lam}\circ f) \\ &=  \mathfrak{H}(x\mapsto \mathfrak{F}(\lam))\\
&=s\ra \mathfrak{F}(\lam),
\end{align*} it follows that $f^\sharp (\mathfrak{H})$ is the conical coreflection of  $s\ra \mathfrak{F}$. Since \[s\ra \mathfrak{F}(\mu)= s\ra\Big(t\ra \bw_{y\in X}\mu(y)\Big)=p\ra\bw_{y\in X}\mu(y),\]  it follows that $s\ra \mathfrak{F}$ is generated by $\{\nu\mid \nu \geq p_X\}$, so $s\ra \mathfrak{F}$ is conical and   \[f^\sharp (\mathfrak{H})(\mu)=p\ra \bw_{y\in X}\mu(y).\]

Since
\begin{align*} \sfm_X(g(f^\sharp (\mathfrak{H})))(\lam)
&= g(f^\sharp (\mathfrak{H}))(\widehat{\lam}) \\ &= f^\sharp (\mathfrak{H})(\widehat{\lam}\circ g) \\ &=  f^\sharp (\mathfrak{H})(x\mapsto \mathfrak{G}(\lam))\\
&=p\ra \mathfrak{G}(\lam),
\end{align*}
it follows that $g^\sharp \circ f^\sharp (\mathfrak{H}) $ is the conical coreflection of $p\ra \mathfrak{G}$.

Because \[p\ra \mathfrak{G}(\mu)=1 \iff p\leq \mathfrak{G}(\mu) = \bv_n\bw_{m\geq n}\mu(1/m), \]therefore \begin{align*}g^\sharp \circ f^\sharp (\mathfrak{H})(\gamma) &=\bv\Big\{\sub_X(\mu,\gamma)\mid p\leq \bv_n\bw_{m\geq n}\mu(1/m) \Big\}  =1. \end{align*}

\textbf{Step 2}. $ (g^\sharp \circ f)^\sharp(\mathfrak{H})(\gamma)\leq p$.

Since \begin{align*}\sfm_X(g^\sharp \circ f(\mathfrak{H})) (\mu) &=g^\sharp \circ f(\mathfrak{H}) (\widehat{\mu})\\ & = \mathfrak{H} (\widehat{\mu}\circ g^\sharp \circ f)\\ &= \mathfrak{H}(x\mapsto g^\sharp (\mathfrak{F})(\mu))\\ & = s\ra g^\sharp (\mathfrak{F})(\mu),\end{align*} it follows that
$ (g^\sharp \circ f)^\sharp (\mathfrak{H})$ is   the conical coreflection of $s\ra g^\sharp (\mathfrak{F})$.

Since  \begin{align*} \sfm_X(g(\mathfrak{F}))(\lam)
&= g(\mathfrak{F})(\widehat{\lam}) \\ &= \mathfrak{F}(\widehat{\lam}\circ g) \\ &=  \mathfrak{F}(x\mapsto \mathfrak{G}(\lam))\\
&= t\ra \mathfrak{G}(\lam),
\end{align*}
it follows that $g^\sharp (\mathfrak{F}) $ is the conical coreflection of $t\ra \mathfrak{G}$. Thus,
\begin{align*} g^\sharp (\mathfrak{F})(\mu) &= \bv\Big\{\sub_X(\lam,\mu)\mid  t \leq \mathfrak{G}(\lam) \Big\}\\ &= \bv\Big\{\sub_X(\lam,\mu)\mid  t \leq \bv_n\bw_{m\geq n}\lam(1/m) \Big\}. \end{align*}

If $s\leq g^\sharp (\mathfrak{F})(\mu)$, then there exist $r>p$ and $\lam\in [0,1]^X$ such that $t \leq \bv_n\bw_{m\geq n}\lam(1/m)$ and that $r<\sub_X(\lam,\mu)$. Since $t,r>p$, then for $m$ large enough, $\mu(1/m)\geq p$ and consequently, \[\mu(1/m)\ra\gamma(1/m)=\mu(1/m)\ra (p(1-1/m))=p(1-1/m)\leq p.\]   Therefore, \begin{align*}(g^\sharp \circ f)^\sharp(\mathfrak{H})(\gamma)&=\bv\big\{\sub_X(\mu,\gamma)\mid s\leq g^\sharp (\mathfrak{F})(\mu)\big\}\leq p. \qedhere \end{align*} \end{proof}

\section{The $\top$-filter monad}

This section considers the question whether   $\top$-filters  give rise to a monad. The idea is to relate $\top$-filters to conical $\sQ$-filters. By a \emph{conical $\sQ$-filter} we mean, of course, a $\sQ$-semifilter that is conical and is a $\sQ$-filter.

Assigning to each set $X$  the set $\CFQ(X)$ of all conical $\sQ$-filters on $X$ defines a subfunctor of $\FQ$: \[\CFQ:{\sf Set}\lra{\sf Set}.\]

For a meet continuous and integral  quantale $\sQ$, the functor $\topF$  is naturally isomorphic to   $\CFQ$, as we see now.

\begin{prop}Let $\sQ$ be a meet continuous and integral  quantale. Then a saturated prefilter $F$ on a  set $X$ is a $\top$-filter if and only if $\Lambda_X(F)$ is a $\sQ$-filter. \end{prop}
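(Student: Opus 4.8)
The plan is to reduce the biconditional to a single axiom. Under the standing meet-continuity hypothesis the map $\Lambda_X(F)$ is already a $\sQ$-semifilter, so it is a $\sQ$-filter precisely when it satisfies the extra axiom (F4), namely $\Lambda_X(F)(p_X)\leq p$ for all $p\in\sQ$. Thus everything comes down to showing that the $\top$-condition on $F$ --- that $\bv_{x\in X}\lam(x)\geq k$ for every $\lam\in F$ --- is equivalent to (F4) for $\Lambda_X(F)$. The computation I will base everything on is the unfolding
\[
\Lambda_X(F)(p_X)=\bv_{\mu\in F}\sub_X(\mu,p_X)=\bv_{\mu\in F}\bw_{x\in X}\big(\mu(x)\ra p\big),
\]
obtained directly from the definitions of $\Lambda_X$ and of $\sub_X$.

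For the forward implication I would assume $F$ is a $\top$-filter and fix $p\in\sQ$ together with some $\mu\in F$. Writing $c=\bw_{x}(\mu(x)\ra p)$, the adjunction $q\with a\leq r\iff a\leq q\ra r$ gives $c\with\mu(x)\leq p$ for every $x$; joining over $x$ and using that $\with$ distributes over joins yields $c\with\bv_x\mu(x)\leq p$. Since $\mu\in F$ forces $\bv_x\mu(x)\geq k$, monotonicity of $\with$ gives $c=c\with k\leq c\with\bv_x\mu(x)\leq p$. Taking the join over all $\mu\in F$ then delivers $\Lambda_X(F)(p_X)\leq p$, which is (F4).

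For the converse I would assume (F4) and fix $\lam\in F$, putting $a=\bv_{x}\lam(x)$. Because $\lam(x)\leq a$ for every $x$, the adjunction gives $\lam(x)\ra a\geq k$, hence $\sub_X(\lam,a_X)=\bw_x(\lam(x)\ra a)\geq k$; as $\lam\in F$, this shows $\Lambda_X(F)(a_X)\geq k$. Applying (F4) with $p=a$ gives $\Lambda_X(F)(a_X)\leq a$, and combining the two inequalities yields $k\leq a=\bv_x\lam(x)$, which is exactly the $\top$-condition. This settles both directions.

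I do not expect a genuine obstacle here: the whole argument is a transcription of the $\top$-condition into (F4) through the adjunction and the distributivity of $\with$ over joins. The one point that must be chosen correctly is the test function used in the converse --- feeding the constant map $a_X$ with $a=\bv_x\lam(x)$ into (F4) --- after which the conclusion is forced. Meet-continuity is used only to guarantee that $\Lambda_X(F)$ is a $\sQ$-semifilter in the first place; integrality belongs to the ambient hypotheses, but, as the argument shows, the essential inequalities rest solely on the adjunction $q\with a\leq r\iff a\leq q\ra r$ and the distributivity of $\with$ over joins.
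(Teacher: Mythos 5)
Your proof is correct and takes essentially the same route as the paper's: both reduce the statement to axiom (F4) for $\Lambda_X(F)$ and evaluate $\Lambda_X(F)$ on constant maps, with the converse direction obtained by testing against the constant map at value $\bigvee_{x\in X}\lambda(x)$ for $\lambda\in F$. The only differences are cosmetic --- you argue the converse directly where the paper argues by contrapositive, and your adjunction-based steps happen to avoid invoking integrality, which the paper uses when it writes $q<1$ and $\bigwedge_{x\in X}(\mu(x)\to q)=1$.
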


\begin{proof}If $F$ is a $\top$-filter then for each $p\in\sQ$, \[\Lambda_X(F)(p_X) = \bv_{\mu\in F}\bw_{x\in X}(\mu(x)\ra p) = \bv_{\mu\in F}\Big(\Big(\bv_{x\in X}\mu(x)\Big)\ra p\Big)=p, \] showing that $\Lambda_X(F)$ is   a $\sQ$-filter. If $F$ is not a $\top$-filter, there is some $\mu\in F$ such that $\bv_{x\in X}\mu(x)=q<1$. Then, \[\Lambda_X(F)(q_X) \geq \bw_{x\in X}(\mu(x)\ra q)=1>q,\] which implies that $\Lambda_X(F)$ is not a $\sQ$-filter. \end{proof}
Therefore, the correspondence  $F\mapsto\Lambda_X(F)$   establishes a   bijection between $\top$-filters and conical $\sQ$-filters and consequently,  $\topF$  is naturally isomorphic to  $\CFQ$: $$\bfig
\morphism(550,0)|b|/{@{>}@<3pt>}/<-550,0>[\CFQ.`\topF;\Gamma]
\morphism(0,0)|a|/{@{>}@<3pt>}/<550,0>[\topF`\CFQ.;\Lambda]
\efig$$

\begin{prop}Let $\sQ$ be a meet continuous and integral  quantale. Then the conical coreflection of each $\sQ$-filter is a $\sQ$-filter. \end{prop}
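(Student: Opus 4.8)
The plan is to exploit that $\mathfrak{c}_X(\mathfrak{F})=\Lambda_X\circ\Gamma_X(\mathfrak{F})$ is, by construction, the conical coreflection of $\mathfrak{F}$: it already \emph{is} a (conical) $\sQ$-semifilter, and, being the counit of the Galois connection $\Lambda_X\dashv\Gamma_X$, it satisfies $\mathfrak{c}_X(\mathfrak{F})\leq\mathfrak{F}$ pointwise (this is exactly the assertion, recorded earlier, that $\mathfrak{c}_X(\mathfrak{F})$ is the largest conical $\sQ$-semifilter below $\mathfrak{F}$). Consequently the only thing left to verify is that $\mathfrak{c}_X(\mathfrak{F})$ inherits axiom (F4), i.e.\ $\mathfrak{c}_X(\mathfrak{F})(p_X)\leq p$ for every $p\in\sQ$; conditions (F1)--(F3) are automatic, since $\mathfrak{c}_X(\mathfrak{F})$ is a $\sQ$-semifilter to begin with.

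First I would record the trivial but decisive observation that (F4) is an \emph{upper-bound} condition and hence is preserved under passage to smaller $\sQ$-semifilters: if $\mathfrak{G}\leq\mathfrak{F}$ pointwise and $\mathfrak{F}$ is a $\sQ$-filter, then $\mathfrak{G}(p_X)\leq\mathfrak{F}(p_X)\leq p$, so $\mathfrak{G}$ is a $\sQ$-filter as well. Applying this with $\mathfrak{G}=\mathfrak{c}_X(\mathfrak{F})$ finishes the argument at once.

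For a self-contained verification that does not invoke the coreflection inequality as a black box, I would instead compute directly. Writing out the coreflection,
\[\mathfrak{c}_X(\mathfrak{F})(p_X)=\bv_{\mu\in\Gamma_X(\mathfrak{F})}\sub_X(\mu,p_X),\]
I would bound each term using the $\sQ$-functoriality (F3), namely $\sub_X(\mu,p_X)\leq\mathfrak{F}(\mu)\ra\mathfrak{F}(p_X)$. Since $\mu\in\Gamma_X(\mathfrak{F})$ means $\mathfrak{F}(\mu)\geq k$, antitonicity of $\ra$ in its first slot together with the identity $k\ra r=r$ (valid because $k$ is the unit) gives $\mathfrak{F}(\mu)\ra\mathfrak{F}(p_X)\leq k\ra\mathfrak{F}(p_X)=\mathfrak{F}(p_X)\leq p$, the last step being (F4) for $\mathfrak{F}$. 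Taking the join over $\mu$ yields $\mathfrak{c}_X(\mathfrak{F})(p_X)\leq p$, as required.

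I do not expect a genuine obstacle: the statement is essentially that a downward condition is inherited downward, so the only point needing care is keeping the direction of the coreflection inequality straight (it is the counit of $\Lambda_X\dashv\Gamma_X$, giving $\Lambda_X\Gamma_X\leq\id$, not the reverse). I would also note, for the record, that of the standing hypotheses only meet continuity is actually used here (to guarantee that $\Lambda_X$ genuinely produces a $\sQ$-semifilter), while the computation needs merely that $k$ is the unit; integrality is retained because it is what aligns $\top$-filters with conical $\sQ$-filters in this section.
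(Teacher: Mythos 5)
Your proof is correct, but it takes a genuinely different (and more economical) route than the paper. The paper first invokes the preceding proposition of that section --- a saturated prefilter $F$ is a $\top$-filter iff $\Lambda_X(F)$ is a $\sQ$-filter --- to reduce the claim to showing that $\Gamma_X(\mathfrak{F})$ is a $\top$-filter, and then argues by contradiction: if some $\mu\in\Gamma_X(\mathfrak{F})$ had $\bv_{x}\mu(x)=q<1$, then $\Lambda_X\circ\Gamma_X(\mathfrak{F})(q_X)=1$, contradicting $\Lambda_X\circ\Gamma_X(\mathfrak{F})(q_X)\leq\mathfrak{F}(q_X)=q$. That argument leans on integrality ($k=1$) through the $\top$-filter characterization. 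You instead observe that (F1)--(F3) hold automatically because $\mathfrak{c}_X(\mathfrak{F})=\Lambda_X\circ\Gamma_X(\mathfrak{F})$ is a $\sQ$-semifilter (this is where meet continuity enters), and that (F4) is an upper-bound condition inherited along the counit inequality $\mathfrak{c}_X(\mathfrak{F})\leq\mathfrak{F}$; your backup computation via (F3), antitonicity, and $k\ra r=r$ is a correct hands-on proof of the same inequality at constant maps. What each approach buys: yours is direct (no contradiction), isolates the purely order-theoretic content, and --- as you rightly note --- shows the proposition holds for any meet continuous quantale, integrality being superfluous here; the paper's detour through $\top$-filters fits the architecture of its Section 6, since the statement it actually proves en route (that $\Gamma_X(\mathfrak{F})$ is a $\top$-filter whenever $\mathfrak{F}$ is a $\sQ$-filter) is precisely the form in which the fact feeds into the natural isomorphism between $\topF$ and $\CFQ$ and the restriction of $\mathfrak{c}:\SFQ\lra\CSFQ$ to $\mathfrak{c}:\FQ\lra\CFQ$. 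Both proofs rest on the same pivot, the coreflection inequality $\Lambda_X\circ\Gamma_X\leq\id$, so yours is a legitimate simplification rather than a shortcut with a hidden gap.
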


\begin{proof}It suffices to check that for each $\sQ$-filter  $\mathfrak{F}$ on a set $X$,  $\Gamma_X(\mathfrak{F})$ is a $\top$-filter. If $\Gamma_X(\mathfrak{F})$ is not a $\top$-filter, then, by the argument of the above proposition,  there is some $q<1$ such that $\Lambda_X(\Gamma_X(\mathfrak{F}))(q_X)=1$, contradicting that $\Lambda_X(\Gamma_X(\mathfrak{F}))(q_X)\leq \mathfrak{F}(q_X)=q$. \end{proof}

Therefore, the natural transformation $\mathfrak{c}:\SFQ\lra\CSFQ$  restricts to a  natural transformation $\mathfrak{c}:\FQ\lra\CFQ$. In particular,  $\CFQ$ is a retract of $\FQ$.

Since $\FQ$ is   a submonad of $(\SFQ,\sfm,\sfe)$, it follows
 that
  \begin{equation}\label{def of n2} {\sf n} \coloneqq \mathfrak{c}\circ\sfm \circ(\mathfrak{i}*\mathfrak{i})  \end{equation}  is a natural transformation $\CFQ^2\lra\CFQ$.\[\bfig \square(1200,0)/>`>`>`<-/<650,450>[\CFQ^2 `\SFQ^2`\CFQ`\SFQ; \mathfrak{i}*\mathfrak{i}`\sfn`\sfm`\mathfrak{c}]\efig\]
Consequently,  \begin{equation}\label{def of fn2}\mathfrak{n}\coloneqq\Gamma\circ \sfn\circ (\Lambda*\Lambda)\end{equation} is  a   natural transformation  $\topF^2\lra\topF$.

It is trivial that $\sfd_X(x)$ is a conical $\sQ$-filter for all set $X$ and all $x\in X$,
so, \[\sfd=\{\sfd_X\}_X\] is a natural transformation ${\rm id}\lra\CFQ$ and  \[\mathfrak{d}=\{\mathfrak{d}_X\}_X\] is  a natural transformation  ${\rm id}\lra\topF$. Now, we   state the question of this section.

\begin{ques}\label{problem2} Let $\sQ$ be a meet continuous and integral  quantale;   let $\sfn:\CFQ^2\lra\CFQ$ and $\mathfrak{n}:\topF^2\lra\topF$ be the natural transformations given in  (\ref{def of n2}) and (\ref{def of fn2}), respectively.
When is the triple \[(\topF, \mathfrak{n},\mathfrak{d})\]  a monad? Or equivalently,  when is the triple \[(\CFQ, \sfn,\sfd)\]  a  monad?
\end{ques}

\begin{rem}\label{6.4}It follows from Proposition \ref{natural of d and n} that the natural transformations $\mathfrak{d}:{\rm id}\lra\topF$ and $\mathfrak{n}:\topF^2\lra\topF$ coincide, respectively, with the natural transformations $\eta$ and $\mu$  in    \cite[Section 3]{YF2020}    defined  via    (\ref{defn of d}) and (\ref{defn of n}).
\end{rem}

Similar to Proposition \ref{easy one} and Corollary \ref{easyone-b}, it can be shown that if $\sQ$ is a meet continuous and  integral  quantale such that  the map $p\ra-: \sQ\lra \sQ$ preserves directed joins for each $p\in \sQ$, then,   $\CFQ$ is a  submonad  of the $\sQ$-semifilter monad, hence  $(\topF, \mathfrak{n},\mathfrak{d})$  is a monad  \cite[Lemma 3.1]{YF2020}.

The main result of this section presents an answer to Question \ref{problem2} in the case that   $\sQ$ is the interval $[0,1]$ equipped with a continuous t-norm.

\begin{thm}\label{main2} Let $\sQ=([0,1],\&,1)$ with $\&$ being a continuous t-norm.
Then  the following statements are equivalent: \begin{enumerate}[label=\rm(\arabic*)]  \item The t-norm $\&$ satisfies the condition (S).
\item  $\CFQ$ is a submonad of $(\SFQ,\sfm,\sfe)$.
\item  The triple $(\CFQ, \sfn,\sfd)$ is a monad.
\item  The triple $(\topF, \mathfrak{n},\mathfrak{d})$ is a monad.
\end{enumerate}
     \end{thm}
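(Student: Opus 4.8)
The plan is to prove $(1)\Rightarrow(2)\Rightarrow(3)\Rightarrow(1)$ together with $(3)\Leftrightarrow(4)$, following the architecture of Theorem \ref{main1}. The equivalence $(3)\Leftrightarrow(4)$ is immediate, since the natural isomorphism $\topF\cong\CFQ$ given by $\Lambda$ and $\Gamma$ carries one monad structure onto the other. The implication $(2)\Rightarrow(3)$ is also immediate from the submonad criterion recorded after Proposition \ref{Q-semifilter monad}: once $\CFQ$ is closed under the multiplication $\sfm$, the triple $(\CFQ,\sfn,\sfd)$ is automatically a monad.

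For $(1)\Rightarrow(2)$ I would avoid repeating the delicate Step~1/Step~2 computation of Theorem \ref{main1} and instead bootstrap off it. Fix a conical $\sQ$-filter $\mathbb{F}$ on $\CFQ(X)$ and set $D=\sfm_X\circ(\mathfrak{i}*\mathfrak{i})_X(\mathbb{F})$, so that $D(\lam)=\mathbb{F}(\widehat{\lam}\circ\mathfrak{i}_X)$. Two properties are needed: that $D$ is conical and that $D$ is a $\sQ$-filter. For conicality, note $\CFQ(X)\subseteq\CSFQ(X)$ and push $\mathbb{F}$ forward along the set-inclusion $\iota:\CFQ(X)\hookrightarrow\CSFQ(X)$; since pushforwards of conical $\sQ$-semifilters are conical (Section~5), $\iota(\mathbb{F})$ is a conical $\sQ$-semifilter on $\CSFQ(X)$, and because the inclusion $\CFQ(X)\hookrightarrow\SFQ(X)$ factors through $\iota$, the $\CSFQ$-diagonal of $\iota(\mathbb{F})$ is exactly $D$. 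Under hypothesis (S), Theorem \ref{main1} says $\CSFQ$ is closed under multiplication, so this diagonal, hence $D$, is conical. For the filter property, evaluate on a constant $r_X$: as every $\sQ$-filter $\mathfrak{G}$ satisfies $\mathfrak{G}(r_X)=r$, the map $\widehat{r_X}\circ\mathfrak{i}_X$ is the constant $r$ on $\CFQ(X)$, whence $D(r_X)=\mathbb{F}(r_{\CFQ(X)})=r$. Thus $D\in\CFQ(X)$; as each $\sfd_X(x)$ is plainly a conical $\sQ$-filter, $\CFQ$ is a submonad of $(\SFQ,\sfm,\sfe)$.

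The hard direction is $(3)\Rightarrow(1)$, which I would argue contrapositively: if (S) fails, the ordinal sum theorem (Theorem \ref{ordinal sum}) yields idempotents $0<p<q$ with $([p,q],\with,q)\cong([0,1],\with_{\L},1)$, and I would produce Kleisli maps $f,g:[0,1]\lra\CFQ([0,1])$ violating $g^\sharp\circ f^\sharp=(g^\sharp\circ f)^\sharp$. Here lies the main obstacle, and it is genuinely different from Theorem \ref{main1}. The witnesses there are the $\sQ$-semifilters generated by the \emph{constant} sub-unit maps $t_X,s_X$, which are \emph{not} $\sQ$-filters, and crucially those maps satisfy $\mathfrak{H}(c_X)=s\ra c\neq c$ on constants, supplying the scaling that drives the argument. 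For a conical $\sQ$-filter, however, (F4) forces $\mathfrak{H}(c_X)=c$; one checks that with \emph{constant} maps $f,g$ this collapses both Kowalsky sums to $g^\sharp(\mathfrak{F})$, so constant-map witnesses can never detect non-associativity in $\CFQ$. The real task is therefore to design honestly \emph{non-constant} maps $f,g:[0,1]\lra\CFQ([0,1])$—proper $\top$-filters whose values vary with $x$ so as to encode the \L ukasiewicz $[p,q]$-scaling—and to show that the resulting two bracketings separate (giving $1$ on one side and $\leq p$ on the other) precisely because $r\ra-$ fails to preserve the relevant directed join at $p$. Constructing and verifying these $x$-dependent filters, rather than any step in the other implications, is where I expect the difficulty to concentrate.
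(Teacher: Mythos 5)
Your handling of $(3)\Leftrightarrow(4)$, $(2)\Rightarrow(3)$ and $(1)\Rightarrow(2)$ is correct. In fact your $(1)\Rightarrow(2)$ is a carefully spelled-out version of the paper's one-line argument: the paper says ``conical $\sQ$-semifilters are closed under multiplication and $\sQ$-filters are closed under multiplication, hence so are conical $\sQ$-filters,'' while you make the two halves explicit — conicality via the pushforward along $\CFQ(X)\hookrightarrow\CSFQ(X)$ and Theorem \ref{main1}, and (F4) for the diagonal $D$ via $\widehat{r_X}\circ\mathfrak{i}_X=r_{\CFQ(X)}$, so $D(r_X)=\mathbb{F}(r_{\CFQ(X)})=r$. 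Your diagnosis of why the witnesses of Theorem \ref{main1} cannot be reused is also correct and is the right insight: if $f$ is constant with value $\mathfrak{F}\in\CFQ(X)$, then (F4) gives $\sfm_X(f(\mathfrak{H}))(\lam)=\mathfrak{H}(\mathfrak{F}(\lam)_X)=\mathfrak{F}(\lam)$, so $f^\sharp(\mathfrak{H})=\mathfrak{F}$ for every $\mathfrak{H}$, and both bracketings collapse to $g^\sharp(\mathfrak{F})$.

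The genuine gap is that $(3)\Rightarrow(1)$ — which you yourself identify as the hard direction and the whole content of the theorem — is never actually proved. Your text ends by describing ``the real task'' of designing non-constant maps $f,g$ and showing the two bracketings separate, but no such maps are constructed and no separation is verified; a statement of the remaining task is not an argument, so the proposal is incomplete exactly where completeness matters. Moreover, the paper's construction shows the fix is much milder than your remarks anticipate: one does not need filters whose values vary with $x$ in any elaborate way, only an \emph{anchoring at a single point}. Keep $\mathfrak{G}$ exactly as in Theorem \ref{main1} (it is already a conical $\sQ$-filter, since each generator $\nu\geq 1_{A_n}$ attains the value $1$); replace $\mathfrak{F}$ and $\mathfrak{H}$ by the conical $\sQ$-filters generated by $\{\nu\mid \nu(1)=1,\ \nu\geq t_X\}$ and $\{\nu\mid \nu(1)=1,\ \nu\geq s_X\}$, which the condition $\nu(1)=1$ turns into $\top$-filters; and take $f$ and $g$ to be constant on $[0,1)$ with values $\mathfrak{F}$ and $\mathfrak{G}$ respectively, but equal to $\sfd_X(1)$ at $x=1$, testing against $\gamma(x)=p(1-x)$ for $x<1$ and $\gamma(1)=1$. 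These maps are non-constant — exactly as your collapsing argument requires — yet the Step 1/Step 2 computations of Theorem \ref{main1} carry over with only minor changes, again yielding $g^\sharp\circ f^\sharp(\mathfrak{H})(\gamma)=1$ while $(g^\sharp\circ f)^\sharp(\mathfrak{H})(\gamma)\leq p$. Supplying this construction (or any equivalent one) is what your proposal is missing.
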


\begin{proof}
$(1)\Rightarrow(2)$ By Theorem \ref{main1}, we know that  conical $\sQ$-semifilters are closed under multiplication in this case. Since $\sQ$-filters are   closed under multiplication, it follows that  conical $\sQ$-filters are   closed under multiplication. Therefore,  $\CFQ$ is a submonad of   $(\SFQ,\sfm,\sfe)$.

$(2)\Rightarrow(3)$ By definition of $\sfn$ and $\sfd$.

$(3)\Rightarrow(1)$ A slight improvement of the proof of $(3)\Rightarrow(1)$  in Theorem \ref{main1} will suffice. Let  $\mathfrak{F}$ be  the conical $\sQ$-filter   generated by the prefilter \[\{\nu:X\lra[0,1]\mid \nu(1)=1,  \nu \geq t_X \};\] let  $\mathfrak{G}$ be the conical $\sQ$-filter  on $X$ generated by the prefilter \[\{\nu:X\lra[0,1] \mid \nu\geq 1_{A_n} ~\text{for  some}~n\geq 1  \} ,\] where $A_n=\{ 1/m \mid m\geq n\}$.

Consider the maps \[f,g:X\lra\CFQ(X)\] given by \[f(x)=\begin{cases} \mathfrak{F},& x<1,\\ \sfd_X(1), &x=1\end{cases} \] and  \[g(x)=\begin{cases} \mathfrak{G},& x<1,\\ \sfd_X(1), &x=1.\end{cases} \] Then, via similar calculations, one sees that $g^\sharp \circ f^\sharp (\mathfrak{H})(\gamma)$ is not equal to $ (g^\sharp \circ f)^\sharp(\mathfrak{H})(\gamma)$, where  \[\gamma(x)=\begin{cases}p(1-x), & x<1, \\ 1, & x=1\end{cases}\] and   $\mathfrak{H}$ is the  $\sQ$-filter    generated by the prefilter \(\{\nu:X\lra[0,1]\mid \nu(1)=1,\nu \geq s_X  \}.\)

$(3)\Leftrightarrow(4)$ By definition.
\end{proof}

\section{The   bounded saturated prefilter monad}
Functional ideals  \cite{Lowen15,LOV2008,LV2008},    postulated for Lawvere's quantale $([0,\infty]^{\rm op},+,0)$, are a special kind of saturated prefilters. The notion of bounded  saturated prefilters  is an extension of that of functional ideals to the quantale-valued context with the quantale being the interval $[0,1]$ equipped with a  continuous t-norm $\&$.  This section investigates whether   bounded  saturated prefilters give rise to a monad.\footnote{We thank gratefully Dirk Hofmann  for bringing this question to our attention.}

A map $ \lambda: X\lra[0,1] $ is said to be \emph{bounded} (precisely, bounded below), if    $\lambda\geq \epsilon_X $ for some $\epsilon>0$.

Let $\with$ be a continuous t-norm on $[0,1]$; let $F$ be a  saturated prefilter on a   set $ X $. Since $[0,1]$ is linearly ordered,   \[B_F\coloneqq\{\mu\in F\mid\text{$ \mu $ is bounded}\} \] is   a prefilter on $X$.  We say that $F$ is a \emph{bounded saturated prefilter}  if  it is the saturation of $B_F$; that is to say,   \[\lam\in F\iff \bv_{\mu\in B_F}\sub_X(\mu,\lam)=1.\]

Let $ \BSF(X) $ denote the set of bounded saturated prefilters on $ X $.

\begin{prop}\label{sat is bounded} For a continuous t-norm $\&$, the following statements are equivalent: \begin{enumerate}[label=\rm(\arabic*)] \item Every saturated prefilter is bounded. \item  $\& $ is isomorphic to the {\L}ukasiewicz t-norm.   \end{enumerate}
\end{prop}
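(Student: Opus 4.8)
The plan is to route both implications through the single number
\[ N\coloneqq\bv_{\epsilon>0}(\epsilon\ra 0), \]
by proving the sharper equivalence that \emph{every saturated prefilter is bounded if and only if $N=1$}, and then identifying $N=1$ with ``$\with$ is isomorphic to the {\L}ukasiewicz t-norm'' via the ordinal sum decomposition theorem. As a preliminary remark, $\epsilon\ra0$ increases as $\epsilon$ decreases, and the adjunction gives $(\epsilon\ra0)\with\epsilon=0$, so each $\epsilon\ra0$ is itself a zero divisor and, conversely, any zero divisor $z$ (with $z\with\epsilon=0$) satisfies $z\le\epsilon\ra0$; hence $N$ is exactly the supremum of the zero divisors of $\with$.

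For $N=1\Rightarrow$ (every saturated prefilter is bounded), which will yield $(2)\Rightarrow(1)$, I would argue directly. For any saturated prefilter $F$ on $X$ one always has $\Ga_X\Lam_X(B_F)\subseteq\Ga_X\Lam_X(F)=F$ by monotonicity of the Galois connection and Proposition \ref{saturation}, so it suffices to show each $\lam\in F$ lies in the saturation of $B_F$. Given $\lam\in F$ and $\epsilon>0$, the truncation $\lam\vee\epsilon_X$ dominates $\lam$, hence lies in $F$, and is bounded, hence lies in $B_F$; moreover
\[ \sub_X(\lam\vee\epsilon_X,\lam)=\bw_{x:\lam(x)<\epsilon}(\epsilon\ra\lam(x))\ge\epsilon\ra0. \]
Joining over $\epsilon$ gives $\Lam_X(B_F)(\lam)\ge N=1$, so $\lam$ is in the saturation of $B_F$. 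Since $\with\cong\with_{\L}$ forces $N=1$ (there every $z<1$ is a zero divisor, $z\with_{\L}(1-z)=0$, and isomorphisms preserve zero divisors), this settles $(2)\Rightarrow(1)$; for {\L}ukasiewicz the estimate specializes to the familiar $\epsilon\ra0=1-\epsilon$.

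For the converse $N<1\Rightarrow$ (some saturated prefilter is not bounded), which will yield $(1)\Rightarrow(2)$, I would exhibit a two-point counterexample. Take $X=\{0,1\}$ and $\lam_0$ with $\lam_0(0)=1$, $\lam_0(1)=0$; then $F=\{\mu\mid\mu\ge\lam_0\}$ is the principal, hence saturated, prefilter generated by $\lam_0$ (and it is proper, as $0_X\notin F$), and $B_F=\{\mu\mid\mu(0)=1,\ \mu(1)>0\}$. A one-line computation gives
\[ \Lam_X(B_F)(\lam_0)=\bv_{t>0}(t\ra0)=N<1, \]
so $\lam_0\in F$ does not lie in the saturation of $B_F$, and $F$ is not bounded.

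The last step, and the one I expect to be the only delicate point, is the arithmetic identification $N=1\iff\with\cong\with_{\L}$, where the structure theory enters. Since the zero divisors form a down-set that omits $1$, the equality $N=1$ is equivalent to ``every $z<1$ is a zero divisor''. Using Proposition \ref{idempotent} one checks that an idempotent $c\in(0,1)$ is never a zero divisor: if $\epsilon\le c$ then $c\with\epsilon=\epsilon>0$, and if $\epsilon>c$ then $c\with\epsilon\ge c\with c=c>0$. Hence $N=1$ forces $\with$ to have no idempotent in $(0,1)$, i.e. to be Archimedean, and Theorem \ref{ordinal sum} then presents all of $[0,1]$ as a single Archimedean summand isomorphic to $\with_P$ or $\with_{\L}$; the product t-norm has no nonzero zero divisors, contradicting $N=1$, so $\with\cong\with_{\L}$. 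The main obstacle is to carry out this classification cleanly — in particular, to be sure that the \emph{global} quantity $N$ (supremum over all zero divisors) correctly detects a proper nilpotent bottom summand and is not fooled by an ordinal sum whose lowest component is itself {\L}ukasiewicz but proper. Keying the counterexample to $N$ rather than to the local behaviour of $\with$ near $0$ is exactly what makes the nontrivial-idempotent case go through automatically.
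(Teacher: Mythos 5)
Your proposal is correct and takes essentially the same route as the paper: the paper's Lemma \ref{LT} proves exactly your identification $\bv_{p>0}(p\ra 0)=1\iff \with\cong\with_{\L}$ (by the same argument — interior idempotents bound $\bv_{p>0}(p\ra 0)$ away from $1$, then the Archimedean classification rules out the product t-norm), its proof of $(2)\Rightarrow(1)$ is the same truncation argument $\mu\mapsto p_X\vee\mu$ with the estimate $\sub_X(p_X\vee\mu,\mu)\geq p\ra 0$, and its proof of $(1)\Rightarrow(2)$ likewise extracts $\bv_{p>0}(p\ra 0)=1$ from one unbounded saturated prefilter, namely the improper prefilter $F=[0,1]$ on a singleton, where you use a proper principal one on a two-point set. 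The remaining differences (your zero-divisor phrasing of $N$, and proving $(2)\Rightarrow(1)$ under the hypothesis $N=1$ rather than reducing to the {\L}ukasiewicz formula $p\ra 0=1-p$) are cosmetic.
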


\begin{lem}\label{LT} A continuous t-norm
$ \& $ is isomorphic to the {\L}ukasiewicz t-norm if and only if \[ \bigvee_{p>0}(p\ra 0)=1 .\]
\end{lem}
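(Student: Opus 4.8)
The plan is to reformulate the quantity $\bv_{p>0}(p\ra 0)$ in terms of zero divisors and then read off the structure of $\with$ from the ordinal sum decomposition theorem. Since $p\with q\le 0$ is equivalent to $p\with q=0$ and, by the adjunction, to $q\le p\ra 0$, one has $p\ra 0=\omax\{q\mid p\with q=0\}$, so that $\bv_{p>0}(p\ra 0)$ equals the supremum $z$ of the set $Z=\{q\in[0,1]\mid \exists\, p>0,\ p\with q=0\}$ of zero divisors. Note that $Z$ is a down-set, because $p\with q=0$ and $q'\le q$ force $p\with q'\le p\with q=0$. Thus the condition to be characterized is simply $z=1$, i.e.\ that $\with$ admits zero divisors arbitrarily close to $1$.

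For the forward implication, if $\with$ is isomorphic to $\with_{\L}$ through an order isomorphism $\phi$ of $[0,1]$, then $\phi$ transports residuation, so $p\ra 0=\phi^{-1}\big(\phi(p)\ra_{\L}0\big)=\phi^{-1}(1-\phi(p))$; letting $p\to 0^+$ and using continuity of $\phi^{-1}$ at $1$ gives $z=\phi^{-1}(1)=1$. (For $\with_{\L}$ itself this is just $p\ra 0=1-p$.)

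For the converse, assume $z=1$. The key step, and the main obstacle, is to show that $\with$ has no idempotent in the open interval $(0,1)$. Suppose $e\in(0,1)$ were idempotent and let $q\in Z$ with witness $p>0$, i.e.\ $p\with q=0$. If $q>e$, then either $p\le e$, in which case $p\le e\le q$ and Proposition \ref{idempotent} gives $p\with q=\omin\{p,q\}=p>0$, or $p>e$, in which case monotonicity yields $p\with q\ge e\with e=e>0$; both contradict $p\with q=0$. Hence every element of $Z$ is $\le e$, forcing $z\le e<1$, a contradiction. Therefore $0$ and $1$ are the only idempotents, so $\with$ is Archimedean; applying Theorem \ref{ordinal sum} to any non-idempotent element (one exists, since otherwise $\with=\omin$ and $z=0$) identifies $([0,1],\with,1)$ with either $([0,1],\with_{\L},1)$ or $([0,1],\with_P,1)$. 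The product t-norm has no nonzero zero divisors, giving $z=0\neq 1$, so this case is excluded and $\with$ must be isomorphic to the {\L}ukasiewicz t-norm. I expect the only delicate point to be the case split in the key step, where Proposition \ref{idempotent} must be invoked in exactly the right form.
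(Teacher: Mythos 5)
Your proof is correct and takes essentially the same route as the paper: both arguments rule out idempotents in $(0,1)$ via Proposition \ref{idempotent} (your zero-divisor set $Z$ and the case split on $p\leq e$ versus $p>e$ are just an explicit unpacking of the paper's terse claim that $p\ra 0\leq b$ for any idempotent $b\in(0,1)$), then invoke the ordinal sum classification to reduce to the {\L}ukasiewicz or product t-norm, and exclude the latter because there $p\ra 0=0$ for all $p>0$. The only difference is presentational, not mathematical.
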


\begin{proof}Necessity is clear. As for  sufficiency, first  we show that $\&$ has no idempotent element in $ (0,1)$, hence $\&$ is either isomorphic to the {\L}ukasiewicz t-norm or to the product t-norm.  If, on the contrary, $b\in(0,1)$ is an idempotent element, then for each $p>0$, $p\ra 0\leq b<1$, a contradiction. If $\&$ is  isomorphic to the product t-norm, then for each $p>0$, $p\ra 0=0$, a contradiction.  Therefore, $\&$ is  isomorphic to the {\L}ukasiewicz t-norm. \end{proof}

\begin{proof}[Proof of Proposition \ref{sat is bounded}]
$(1)\Rightarrow(2)$ Consider the largest prefilter $F$ on a singleton set; that is, $F$ is   the unit interval $[0,1]$. By assumption, $F$ is bounded. Then   \[  \bigvee_{p>0}(p\ra 0)=1, \] which, by Lemma \ref{LT}, implies that $\&$ is isomorphic to the {\L}ukasiewicz t-norm.

$(2)\Rightarrow(1)$ Without loss of generality, we may assume that $\&$ is, not only  isomorphic to, the {\L}ukasiewicz t-norm. Let $F$ be a saturated prefilter on a set $X$. Since for all $p>0$ and $\mu\in F$, $p_X\vee\mu$ is bounded and \[\sub_X(p_X\vee\mu,\mu)= \bw_{x\in X}(p \ra\mu(x))\geq 1-p,\] it follows that \[ 1=\bigvee_{p>0}\sub_X(p_X\vee\mu,\mu). \] Therefore, $F$ is bounded.
\end{proof}

\begin{con}Because of Proposition \ref{sat is bounded},  in the remainder of this section, we always assume that $\sQ$ is the quantale $([0,1],\&,1)$ with  $\&$ being a continuous t-norm that is not isomorphic to  the {\L}ukasiewicz t-norm, unless otherwise specified. \end{con}

\begin{prop}\label{BS}
Every element of a bounded saturated prefilter  is bounded.
\end{prop}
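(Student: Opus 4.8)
The plan is to argue by contradiction. Suppose $F$ is a bounded saturated prefilter on $X$ and that some $\lam\in F$ fails to be bounded; since $[0,1]$ is linearly ordered, this means precisely that $\bw_{x\in X}\lam(x)=0$. Because $\lam\in F$ and $F$ is the saturation of $B_F$, the defining condition for a bounded saturated prefilter gives $\bv_{\mu\in B_F}\sub_X(\mu,\lam)=1$. The goal is to contradict this equality by showing that the supremum on the left is in fact bounded away from $1$.

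The key estimate is an upper bound on $\sub_X(\mu,\lam)$ for a single bounded member $\mu\in B_F$. I would fix $\mu\in B_F$ and choose $\epsilon>0$ with $\mu\geq\epsilon_X$. Since $\ra$ is antitone in its first argument, $\mu(x)\ra\lam(x)\leq\epsilon\ra\lam(x)$ for every $x$, whence $\sub_X(\mu,\lam)\leq\bw_{x\in X}(\epsilon\ra\lam(x))$. Now $\epsilon\ra-$ is the right adjoint of $\epsilon\with-$ and therefore preserves arbitrary meets, so $\bw_{x}(\epsilon\ra\lam(x))=\epsilon\ra\bw_{x}\lam(x)=\epsilon\ra 0$, the last equality using that $\lam$ is unbounded. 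Thus $\sub_X(\mu,\lam)\leq\epsilon\ra 0\leq\bv_{p>0}(p\ra 0)$.

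Taking the supremum over all $\mu\in B_F$ preserves this constant bound, so $\bv_{\mu\in B_F}\sub_X(\mu,\lam)\leq\bv_{p>0}(p\ra 0)$. By the standing convention $\&$ is not isomorphic to the {\L}ukasiewicz t-norm, so Lemma \ref{LT} yields $\bv_{p>0}(p\ra 0)<1$. This contradicts $\bv_{\mu\in B_F}\sub_X(\mu,\lam)=1$, and hence $\lam$ must be bounded.

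I expect the only genuinely delicate point to be the collapse of the infimum $\bw_{x}(\epsilon\ra\lam(x))$ to $\epsilon\ra 0$: this is where one must invoke the adjunction $\epsilon\with-\dashv\epsilon\ra-$ rather than any continuity of $\ra$, and where the hypothesis that $\lam$ is unbounded (i.e.\ $\bw_{x}\lam(x)=0$) is actually used. Everything else is routine monotonicity of the implication together with the {\L}ukasiewicz characterization supplied by Lemma \ref{LT}.
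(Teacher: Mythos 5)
Your proof is correct, but it takes a genuinely different route from the paper's. The paper argues directly: given $\mu\in F$, it fixes $b\in(0,1)$ (an idempotent of $\with$ if one exists, an arbitrary element otherwise), uses the saturation condition to produce a bounded $\lam\in B_F$ with $\lam\geq a_X$ and $\sub_X(\lam,\mu)\geq b$, and concludes $\mu(x)\geq a\with b>0$ for every $x$; the crucial positivity of $a\with b$ rests, implicitly, on a case analysis via the idempotent structure of $\with$ (Proposition \ref{idempotent}, plus the fact that a non-{\L}ukasiewicz continuous t-norm with no interior idempotents must be isomorphic to the product t-norm). You instead argue by contradiction: for unbounded $\lam$ you collapse $\bw_x\bigl(\epsilon\ra\lam(x)\bigr)$ to $\epsilon\ra 0$ using that $\epsilon\ra-$ preserves arbitrary meets (being a right adjoint), and then invoke Lemma \ref{LT} to bound the saturation supremum by $\bv_{p>0}(p\ra 0)<1$. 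What your route buys is a cleaner localization of the hypothesis: the non-{\L}ukasiewicz assumption enters in exactly one place, through Lemma \ref{LT}, with no case distinction on idempotents. What the paper's route buys is an explicit quantitative conclusion, namely the uniform lower bound $\mu\geq (a\with b)_X$, and a direct (rather than contrapositive) argument. One cosmetic remark: your appeal to linearity of $[0,1]$ for the equivalence ``unbounded $\iff\bw_x\lam(x)=0$'' is unnecessary --- that equivalence holds in any complete lattice once ``bounded'' means $\lam\geq\epsilon_X$ for some $\epsilon>0$.
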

\begin{proof}
Let $F$ be a bounded saturated prefilter and $B_F$ be the set of all bounded elements of $F$. By definition, for each $\mu\in F$, we have 	  \[ \bigvee_{\lam\in B_F}\sub_X(\lambda,\mu)=1. \]
Let $b$ be an idempotent element of $\&$ in $ (0,1)$ if  $\&$ has one, otherwise, take an arbitrary element in $(0,1)$ for $b$.  Then there exist some $\lam\in F$ and $a>0$ such that $\lam\geq a_X$ and $b\leq\sub_X(\lam,\mu)$. Then for each $x$, we have \(0<a\with b\leq \mu(x)\), showing that $\mu$ is bounded.
\end{proof}

\begin{exmp}[Functional ideals, I] \label{functional ideal I} Functional ideals play  the same role in approach spaces as what filters do in   topological spaces, see, e.g. \cite{CLR,CVO,RL97,Lowen15,LOV2008,LV2008}. This example shows that   functional ideals are essentially  bounded saturated prefilters for the product t-norm $\&_P$. For each $X$, let $BX$ denote the set of all bounded functions $X\lra[0,\infty]$.
Then, a \emph{functional ideal}  on $X$ in the sense of \cite{Lowen15,LOV2008,LV2008} is a subset $\mathfrak{I}$ of $BX$ subject to the following conditions:
\begin{enumerate}[label=\rm(\roman*)] \item If $\lam\in \mathfrak{I}$ and $\mu\leq \lam$ (pointwise)  then $\mu\in \mathfrak{I}$. \item If $\lam,\mu\in \mathfrak{I}$ then there is some $\gamma\in \mathfrak{I}$ such that $\gamma(x)\geq\max\{\lam(x),\mu(x)\}$ for all $x\in X$. \item $\mathfrak{I}$ is saturated in the sense that for each $\lam:X\lra[0,\infty]$: \[(\forall\epsilon>0, \exists\thinspace\mu\in \mathfrak{I}, \lam\leq\mu+\epsilon) \Rightarrow \lam\in \mathfrak{I}.\] \end{enumerate}

Since   $x\mapsto \mathrm{e}^{-x}$ is an isomorphism between Lawvere's quantale $([0,\infty]^{\rm op},+,0)$ and the quantale $([0,1], \with_P,1)$,    a functional ideal on a set $X$  is essentially a bounded saturated prefilter on $X$ (with respect to the   t-norm $\with_P$).\end{exmp}

For a saturated prefilter $F$  on a set $X$,  let $\varrho_X(F)$ be the set  of  bounded elements in $F$. Then $\varrho_X(F)$ is a bounded  saturated prefilter on $X$,    the largest one contained in $F$, called the \emph{bounded coreflection} of $F$. Moreover,    \[\varrho_X(F)=\{\lam\vee \epsilon_X\mid \lam\in F, \epsilon>0\}.\]

Given a map $f:X\lra Y$ and a bounded saturated prefilter $F$   on $X$, let \[f_B(F)=\varrho_Y(f(F))=\{\mu\in[0,1]^Y\mid \mu~\text{is bounded and}~\mu\circ f\in F\}.\]   Then the assignment \[X\to^f Y~\mapsto~ \BSF(X)\to^{f_B}\BSF(Y)\] defines a functor \[\BSF:{\sf Set}\to{\sf Set}.\] Moreover, $\varrho=\{\varrho_X\}_X$ is a natural transformation $\SPF\lra\BSF$ and it is an epimorphism in the category of endofunctors on {\sf Set}.

We would like to warn the reader that though $\BSF(X)$ is a subset of $\SPF(X)$ for every set $X$, the functor $\BSF$ is, in general, not a subfunctor of $\SPF$.

Now we define two natural transformations $\widetilde{\mathfrak{d}}:{\rm id}\lra\BSF$ and   $\widetilde{\mathfrak{n}}:\BSF^2\lra\BSF$.

For each $x\in X$, let $\widetilde{\mathfrak{d}}_X(x)$ be the bounded coreflection of $\mathfrak{d}_X(x)$, i.e., \[\widetilde{\mathfrak{d}}_X(x)=\{\lam\in[0,1]^X\mid  \lam(x)=1, \lam~\text{is bounded}\}.\] Then, $\widetilde{\mathfrak{d}}=\{\widetilde{\mathfrak{d}}_X\}_X$ is  a natural transformation  ${\rm id}\lra\BSF$, it is indeed the composite of $\varrho: \SPF\lra\BSF$ and $\mathfrak{d}:\id\lra\SPF$.

For each bounded saturated prefilter $\CF$ on $\BSF(X)$, let \begin{equation*}
\widetilde{\mathfrak{n}}_X(\CF)= \{\lam\in[0,1]^X\mid \lam~\text{is bounded}, \widetilde{\lam} \in\CF\},\end{equation*} where \[\widetilde{\lam}(F)=\bv_{\mu\in F}\sub_X(\mu,\lam) \] for each bounded saturated prefilter $F$ on $X$. Put differently, $\widetilde{\mathfrak{n}}_X(\CF)$ is the bounded coreflection of $\mathfrak{n}_X(k_X(\CF))$, where $k_X$ refers to the inclusion of $\BSF(X)$ in $\SPF(X)$. Then, \begin{equation}\label{tilden}\widetilde{\mathfrak{n}}=\{ \widetilde{\mathfrak{n}}_X\}_X\end{equation} is a natural transformation   $\BSF^2\lra\BSF$ (see Propositions \ref{n is a natural trans} and \ref{naturality of d and n} below).

The question of this section is:
\begin{ques}When is the triple \[(\BSF, \widetilde{\mathfrak{n}}, \widetilde{\mathfrak{d}})\]  a monad?\end{ques}
As in previous sections,  the  idea to answer this question  is to relate bounded saturated prefilters to certain kind of $\sQ$-semifilters, conical bounded $\sQ$-semifilters in this case.

\begin{exmp}[Functional ideals, II] \label{functional ideal II}
Let  $\&$ be the product t-norm. Then, the natural transformation $\widetilde{\mathfrak{n}}$ in  (\ref{tilden}) is essentially the multiplication  of the functional ideal monad   in \cite[Subsection 2.3]{CVO}.
To see this, first we  show that for each bounded saturated prefilter $F$ on $X$ and each $\alpha>0$, \[\alpha\otimes F=\{\lam\in[0,1]^X, \exists\mu\in F, \alpha\with\mu\leq\lam\}\] is a bounded saturated prefilter. It suffices to check that $\alpha\otimes F$ is saturated. Actually, \begin{align*}\bv_{\gamma\in\alpha\otimes F}\sub_X(\gamma,\lam)=1&\Longrightarrow  \bv_{\mu\in  F}\sub_X(\alpha\with\mu,\lam)=1\\ &\Longrightarrow\bv_{\mu\in  F}\sub_X(\mu,\alpha\ra\lam)=1\\ &\Longrightarrow \alpha\ra\lam\in F \\ & \Longrightarrow \lam\in \alpha\otimes F.\quad~~(\alpha\with(\alpha\ra\lam)\leq\lam)\end{align*}

Next, let $\mu:X\lra[0,1]$ be a bounded map, $p\in[0,1]$, and let $\CF$ be  a bounded saturated prefilter  on $\BSF(X)$. Then, for each   $F\in\BSF(X)$,   \begin{align*}p\leq \widetilde{\mu}(F)&\iff p\leq \bv_{\lam\in F}\sub_X(\lam,\mu)\\ &\iff p\leq \bv\{\alpha\in(0,1]\mid \exists \lam\in F, \alpha\with\lam\leq\mu\}\\ &\iff  p\leq \bv\{\alpha\in(0,1]\mid \mu\in \alpha\otimes F\}.\end{align*}

Therefore, $\widetilde{\mu}$ is essentially the map $l_\mu$ in \cite[Subsection 2.2]{CVO}, and consequently, the natural transformation  $\widetilde{\mathfrak{n}}$  is the multiplication  of the  functional ideal  monad  in \cite[Subsection 2.3]{CVO}.
 \end{exmp}

A  $\sQ$-semifilter $\mathfrak{F}$ on a set $X$ is said to be \emph{bounded} if $\mathfrak{F}(\mu)<1$ whenever $\mu:X\lra[0,1]$ is unbounded.

\begin{lem}\label{bounded Q-filter} Let $\mathfrak{F}$ be a  $\sQ$-semifilter on a set $X$. If $\mathfrak{F}$ is bounded, then the saturated prefilter $\Gamma_X(\mathfrak{F})$ is bounded. The converse implication also holds when $\mathfrak{F}$ is conical. Therefore, the conical coreflection of a bounded $\sQ$-semifilter is bounded.
\end{lem}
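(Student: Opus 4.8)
The plan is to prove the two implications in turn and then read off the final ``therefore'' from them. Throughout I would use that in this section $\sQ=([0,1],\&,1)$ is integral, so $k=1$ and $\Gamma_X(\mathfrak{F})=\{\lam\in\sQ^X\mid\mathfrak{F}(\lam)=1\}$.

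First I would handle ``$\mathfrak{F}$ bounded $\Rightarrow\Gamma_X(\mathfrak{F})$ bounded'', which turns out to be immediate. The point is that boundedness of $\mathfrak{F}$ forces \emph{every} member of $\Gamma_X(\mathfrak{F})$ to be bounded: if $\lam\in\Gamma_X(\mathfrak{F})$ then $\mathfrak{F}(\lam)=1$, so $\lam$ cannot be unbounded, since otherwise $\mathfrak{F}(\lam)<1$. Hence $\Gamma_X(\mathfrak{F})$ coincides with its set of bounded members $B_{\Gamma_X(\mathfrak{F})}$, and being already saturated it is trivially the saturation of $B_{\Gamma_X(\mathfrak{F})}$; that is, a bounded saturated prefilter.

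For the converse I would assume $\mathfrak{F}$ is conical and $\Gamma_X(\mathfrak{F})$ is bounded, and show $\mathfrak{F}(\mu)<1$ for every unbounded $\mu$. Conicality gives $\mathfrak{F}(\mu)=\bv_{\lam\in\Gamma_X(\mathfrak{F})}\sub_X(\lam,\mu)$, and by Proposition \ref{BS} each such $\lam$ is bounded, say $\lam\geq a_X$ with $a>0$. Since $\ra$ is antitone in its first variable and $a\ra-$ is a right adjoint (hence preserves meets), one gets
\[
\sub_X(\lam,\mu)=\bw_{x}\big(\lam(x)\ra\mu(x)\big)\leq\bw_x\big(a\ra\mu(x)\big)=a\ra\bw_x\mu(x)=a\ra 0,
\]
using that $\mu$ unbounded means $\bw_x\mu(x)=0$. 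Taking the supremum over $\lam$ and recalling that each witnessing bound is strictly positive yields $\mathfrak{F}(\mu)\leq\bv_{a>0}(a\ra 0)$. This is exactly where the standing hypothesis that $\&$ is \emph{not} isomorphic to the {\L}ukasiewicz t-norm is used: by Lemma \ref{LT}, $\bv_{a>0}(a\ra 0)<1$, whence $\mathfrak{F}(\mu)<1$ and $\mathfrak{F}$ is bounded.

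Finally, for the ``therefore'', let $\mathfrak{F}$ be a bounded $\sQ$-semifilter and $\mathfrak{c}_X(\mathfrak{F})=\Lambda_X\circ\Gamma_X(\mathfrak{F})$ its conical coreflection. Since $\mathfrak{F}(\lam)\geq k\iff\mathfrak{c}_X(\mathfrak{F})(\lam)\geq k$, the two share the same $\Gamma$, i.e.\ $\Gamma_X(\mathfrak{c}_X(\mathfrak{F}))=\Gamma_X(\mathfrak{F})$, which is bounded by the first part; as $\mathfrak{c}_X(\mathfrak{F})$ is conical, the converse part then gives that $\mathfrak{c}_X(\mathfrak{F})$ is bounded. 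The main obstacle is the converse implication, and within it the one quantitative input that drives the argument is the strict inequality $\bv_{a>0}(a\ra 0)<1$ for non-{\L}ukasiewicz t-norms (Lemma \ref{LT}); everything else reduces to the adjunction $a\&-\dashv a\ra-$ and Proposition \ref{BS}.
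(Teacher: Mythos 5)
Your proof is correct, and while its overall skeleton matches the paper's, your treatment of the key converse implication takes a genuinely different route. The easy direction and the final ``therefore'' (via $\Gamma_X(\mathfrak{c}_X(\mathfrak{F}))=\Gamma_X(\mathfrak{F})$) are exactly as in the paper. For the converse, the paper also starts from conicality, $\mathfrak{F}(\mu)=\bv_{\lam\in\Gamma_X(\mathfrak{F})}\sub_X(\lam,\mu)$, and the boundedness of the members $\lam$, but it then fixes a single $b<1$ (an idempotent of $\&$ in $(0,1)$ if one exists, an arbitrary element of $(0,1)$ otherwise) and proves $\sub_X(p_X,\mu)\leq b$ for all $p>0$ by a pointwise estimate: pick $z$ with $\mu(z)<b\wedge p$ and bound $p\ra\mu(z)$ using Proposition \ref{idempotent}; the standing non-{\L}ukasiewicz hypothesis enters only implicitly, through the idempotent/no-idempotent dichotomy. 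You instead evaluate the relevant quantities exactly: since $a\ra-$ preserves infima and $\bw_x\mu(x)=0$ for unbounded $\mu$, you get $\sub_X(\lam,\mu)\leq\bw_x(a\ra\mu(x))=a\ra 0$, hence $\mathfrak{F}(\mu)\leq\bv_{a>0}(a\ra 0)$, and Lemma \ref{LT} (which the paper proves but uses only for Proposition \ref{sat is bounded}) gives the strict inequality $<1$. Your route is shorter, needs no case analysis on idempotents, and makes the use of the standing assumption that $\&$ is not isomorphic to the {\L}ukasiewicz t-norm explicit and localized in Lemma \ref{LT}; it also sidesteps a step of the paper's argument that needs care, since in the Archimedean (product-type) case the inequality $p\ra\mu(z)\leq b$ requires the stronger choice $\mu(z)<b\with p$ rather than merely $\mu(z)<b\wedge p$. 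What the paper's construction buys in exchange is an explicit uniform bound $b$ obtained without invoking Lemma \ref{LT}.
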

\begin{proof}
If $\mathfrak{F}$ is bounded and $\mu\in \Gamma_X(\mathfrak{F})$, then $\mathfrak{F}(\mu)=1$, hence $\mu$ is bounded. As for the converse implication, assume that $\mathfrak{F}$ is conical. Since $\Gamma_X(\mathfrak{F})$ is bounded and \[\mathfrak{F}(\mu)=\bv_{ \lam\in \Gamma_X(\mathfrak{F})}\sub_X(\lam,\mu),\] it suffices to check that if $\mu$ is unbounded, then there is some $b<1$ such that $\sub_X(p_X,\mu)\leq b$ for all $p>0$. Take for $b$   an idempotent element of $\&$ in $ (0,1)$ if  $\&$ has one, otherwise, take an arbitrary element in $(0,1)$ for $b$. Since $\mu$ is unbounded, for each $p>0$ there is some $z\in X$ such that $\mu(z)<b\wedge p$. Then \[\sub_X(p_X,\mu)\leq p\ra\mu(z)\leq b,\] as desired.
\end{proof}

For each set $X$, let \[ \BCF(X) \] be the set of conical bounded $\mathsf{Q}$-semifilters on $X$.

For each $\sQ$-semifilter $\mathfrak{F}$ on  $X$,    \[ \vartheta_X(\mathfrak{F})\coloneqq\Lambda_X\circ\varrho_X\circ\Gamma_X( \mathfrak{F} )  \] is clearly  the largest conical bounded $\sQ$-semifilter that is smaller than or equal to $\mathfrak{F}$, and is called the \emph{conical bounded coreflection} of $\mathfrak{F}$.

For each map $f:X\lra Y$ and each conical bounded $\sQ$-semifilter $\mathfrak{F}$ on $X$, let   $f_B(\mathfrak{F})$ be the conical bounded coreflection of $f(\mathfrak{F})$, i.e., \[f_B(\mathfrak{F})=\vartheta_Y(f(\mathfrak{F})). \]  Then we obtain a functor \[\BCF:{\sf Set}\to{\sf Set}, \quad f\mapsto f_B. \]

We hasten to note that though $\BCF(X)$ is a subset of $\SFQ(X)$ for each set $X$,   the functor $\BCF$  is, in general, not a subfunctor of $\SFQ$. But,   \[ \vartheta=\{\vartheta_X\}_X \]
is a natural transformation from \(\SFQ\) to \(\BCF\) and it is  an epimorphism in the category of endofunctors on {\sf Set}.

The following lemma is a useful property of the map $f_B$.

\begin{lem}\label{image of CBF} Let $f:X\lra Y$ be a map. Then, for each conical  $\sQ$-semifilter $\mathfrak{F}$ and each bounded $\lam\in [0,1]^Y$, $f_B(\mathfrak{F})(\lam)=f(\mathfrak{F})(\lam)$. \end{lem}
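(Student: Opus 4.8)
The plan is to unravel the definition of $f_B$ and reduce the claim to a comparison of two joins indexed over the members of one prefilter. By definition $f_B(\mathfrak{F})=\vartheta_Y(f(\mathfrak{F}))=\Lambda_Y\circ\varrho_Y\circ\Gamma_Y(f(\mathfrak{F}))$, so, writing $F\coloneqq\Gamma_Y(f(\mathfrak{F}))$ for the saturated prefilter of $f(\mathfrak{F})$, I would record that
\[ f_B(\mathfrak{F})(\lam)=\bv_{\nu\in\varrho_Y(F)}\sub_Y(\nu,\lam). \]
On the other side, since $\mathfrak{F}$ is conical its image $f(\mathfrak{F})$ is conical as well (this is exactly why $\CSFQ$ is a functor, see Section~5), whence $f(\mathfrak{F})=\Lambda_Y(F)$ and
\[ f(\mathfrak{F})(\lam)=\bv_{\mu\in F}\sub_Y(\mu,\lam). \]
Thus the lemma says precisely that, for bounded $\lam$, restricting the join from $F$ to its set $\varrho_Y(F)$ of bounded members leaves the value unchanged. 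One inequality is free: as $\varrho_Y(F)\subseteq F$, a join over the smaller index set is smaller, so $f_B(\mathfrak{F})(\lam)\le f(\mathfrak{F})(\lam)$.

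For the reverse inequality I would first fix a witness $\epsilon>0$ with $\lam\ge\epsilon_Y$ (available because $\lam$ is bounded) and prove that \emph{every} $\mu\in F$ satisfies $\sub_Y(\mu\vee\epsilon_Y,\lam)=\sub_Y(\mu,\lam)$. Granting this, the element $\mu\vee\epsilon_Y$ lies in $\varrho_Y(F)$ — it dominates $\mu$, hence belongs to the prefilter $F$, and it dominates $\epsilon_Y$, hence is bounded — so that $f_B(\mathfrak{F})(\lam)\ge\sub_Y(\mu\vee\epsilon_Y,\lam)=\sub_Y(\mu,\lam)$; taking the join over all $\mu\in F$ yields $f_B(\mathfrak{F})(\lam)\ge f(\mathfrak{F})(\lam)$, and the two inequalities together finish the proof.

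The heart of the matter — and the only place where boundedness of $\lam$ enters — is the pointwise identity behind $\sub_Y(\mu\vee\epsilon_Y,\lam)=\sub_Y(\mu,\lam)$. For each $y\in Y$ one compares $(\mu(y)\vee\epsilon)\ra\lam(y)$ with $\mu(y)\ra\lam(y)$, splitting on whether $\mu(y)\ge\epsilon$: if $\mu(y)\ge\epsilon$ the two first arguments coincide and the implications are equal; if $\mu(y)<\epsilon$ then both first arguments are $\le\epsilon\le\lam(y)$, so by integrality each implication equals $1$. Hence the two implications agree at every $y$, and taking the meet over $y$ gives the claimed equality of $\sub$-values. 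I expect this short case analysis to be the main (essentially the only) obstacle; everything else is formal manipulation of the Galois adjunction $\Lambda_Y\dashv\Gamma_Y$ together with the description of $\varrho_Y$ as passage to bounded elements.
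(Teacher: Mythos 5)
Your proof is correct and takes essentially the same approach as the paper's: both exploit conicality of $f(\mathfrak{F})$ to express the values as joins over the associated saturated prefilter, and both hinge on the observation that replacing $\mu$ by $\mu\vee\epsilon_Y$ leaves $\sub_Y(\mu,\lam)$ unchanged when $\lam\geq\epsilon_Y$. The only difference is presentational: the paper writes a single chain of equalities, whereas you split the argument into two inequalities and spell out the pointwise case analysis that the paper leaves implicit.
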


\begin{proof}
Since $\lam$ is bounded, there is some $\epsilon>0$ such that $\lam\geq\epsilon_Y$. Since $ f(\mathfrak{F}) $ is conical, then \begin{align*}
f(\mathfrak{F})(\lambda)&=\bigvee\{\sub_Y(\mu,\lambda)\mid f(\mathfrak{F})(\mu)=1\}   \\
&= \bigvee\{\sub_Y(\mu\vee\epsilon_Y,\lambda)\mid f(\mathfrak{F})(\mu)=1\}  \\
&= \bigvee\{\sub_Y(\mu,\lambda)\mid f(\mathfrak{F})(\mu)=1, \mu~\text{is bounded}\} \\&=f_B(\mathfrak{F})(\lambda).
\qedhere\end{align*} \end{proof}

It is clear that the correspondence \(F\mapsto\Lambda_X(F)\)    restricts to a bijection between bounded saturated prefilters and conical bounded $\sQ$-semifilters,  so, $\BCF$ is naturally isomorphic to $\BSF$: $$\bfig
\morphism(550,0)|b|/{@{>}@<3pt>}/<-550,0>[\BCF.`\BSF;\Gamma]
\morphism(0,0)|a|/{@{>}@<3pt>}/<550,0>[\BSF`\BCF.;\Lambda]
\efig$$ Thus, the question considered in this section is equivalent to whether the functor $\BCF$ gives rise to a monad.

For each $x\in X$, let $\widetilde{\sfd}_X(x)$ be the conical bounded coreflection of $\sfe_X(x)$, i.e., \[\widetilde{\sfd}_X(x)=\vartheta_X(\sfe_X(x)).\] Then $\widetilde{\sfd}=\{\widetilde{\sfd}_X\}_X$ is  a natural transformation   ${\rm id}\lra\BCF$.

\begin{lem}\label{bounded filter closed under diagonal} If $\mathbb{F}$ is a bounded  $\sQ$-semifilter on   $\BCF(X)$, then the diagonal $\sQ$-semifilter \(\sfm_X(j_X(\mathbb{F}))\) is also bounded, where $j_X$ denotes the inclusion $\BCF(X)\lra\SFQ(X)$. \end{lem}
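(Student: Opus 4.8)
The plan is to unwind the definition of the diagonal $\sQ$-semifilter and reduce the desired boundedness to the unboundedness of a single test function living on $\BCF(X)$. By the description of the diagonal $\sQ$-semifilter recalled before Proposition \ref{submonad} (with $j_X$ in place of $\mathfrak{i}_X$), one has
\[\sfm_X(j_X(\mathbb{F}))(\lam)=\mathbb{F}(\widehat{\lam}\circ j_X),\]
where $\widehat{\lam}\circ j_X\colon\BCF(X)\lra[0,1]$ is the evaluation map $\mathfrak{G}\mapsto\mathfrak{G}(\lam)$. Thus, given an unbounded $\lam\in[0,1]^X$, I first aim to show that $\widehat{\lam}\circ j_X$ is unbounded as a map on $\BCF(X)$; the boundedness hypothesis on $\mathbb{F}$ will then immediately yield $\mathbb{F}(\widehat{\lam}\circ j_X)<1$, that is, $\sfm_X(j_X(\mathbb{F}))(\lam)<1$, which is exactly what boundedness of $\sfm_X(j_X(\mathbb{F}))$ requires.

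To exhibit elements of $\BCF(X)$ on which $\widehat{\lam}\circ j_X$ takes arbitrarily small values, I would use the coreflected point evaluations $\widetilde{\sfd}_X(x)=\vartheta_X(\sfe_X(x))$, each of which is conical and bounded, hence a genuine member of $\BCF(X)$ (that it is bounded follows from the converse clause of Lemma \ref{bounded Q-filter}, since it is conical and its $\Gamma_X$ consists of bounded maps). The key computational input is the coreflection inequality: because $\widetilde{\sfd}_X(x)$ is by definition the largest conical bounded $\sQ$-semifilter below $\sfe_X(x)$, it lies below $\sfe_X(x)$ in the pointwise order, so
\[\widetilde{\sfd}_X(x)(\lam)\leq\sfe_X(x)(\lam)=\lam(x)\]
for every $\lam\in[0,1]^X$.

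Finally, I would invoke that unboundedness of $\lam$ is precisely the statement $\bw_{x\in X}\lam(x)=0$: for each $\epsilon>0$ there is some $x\in X$ with $\lam(x)<\epsilon$, and then $(\widehat{\lam}\circ j_X)(\widetilde{\sfd}_X(x))=\widetilde{\sfd}_X(x)(\lam)\leq\lam(x)<\epsilon$. Hence $\bw_{x\in X}(\widehat{\lam}\circ j_X)(\widetilde{\sfd}_X(x))=0$, so $\widehat{\lam}\circ j_X$ is indeed unbounded, closing the argument. There is no deep obstacle here; the only point that needs care is conceptual rather than computational, namely that boundedness of $\mathbb{F}$ must be tested against functions on $\BCF(X)$ (not on $X$), and that the correct ``small'' test elements are the coreflected units $\widetilde{\sfd}_X(x)$, whose values on $\lam$ are controlled by $\lam(x)$ via the coreflection inequality.
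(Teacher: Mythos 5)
Your proposal is correct and follows essentially the same route as the paper's proof: reduce to showing $\widehat{\lam}\circ j_X$ is unbounded, then test it against the coreflected point evaluations $\widetilde{\sfd}_X(x)\in\BCF(X)$, using $\widetilde{\sfd}_X(x)(\lam)\leq\sfe_X(x)(\lam)=\lam(x)$ and $\bw_{x\in X}\lam(x)=0$. The paper condenses exactly this into the single chain of inequalities $\bw_{\mathfrak{G}\in\BCF(X)}\mathfrak{G}(\lam)\leq\bw_{x\in X}\widetilde{\sfd}_X(x)(\lam)\leq\bw_{x\in X}\lam(x)=0$.
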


Before proving the conclusion, we remind the reader  that $\{j_X\}_X$ is, in general,  not a natural transformation from $\BCF$ to $\SFQ$.

\begin{proof}We  check that if $\mu\in[0,1]^X$ is unbounded,   then $\sfm_X(j_X(\mathbb{F}))(\mu)<1$.  Since $\mathbb{F}$ is bounded and \(\sfm_X(j_X(\mathbb{F}))(\mu) =\mathbb{F}(\widehat{\mu}\circ j_X)\),  it suffices to check that $\widehat{\mu}\circ j_X$ is unbounded. This follows from that \begin{align*}\bw_{\mathfrak{G}\in\BCF(X)}\widehat{\mu}\circ j_X(\mathfrak{G}) &= \bw_{\mathfrak{G}\in\BCF(X)}\mathfrak{G}(\mu)\leq\bw_{x\in X}\widetilde{\sfd}_X(x)(\mu)\leq \bw_{x\in X}\mu(x)=0. \qedhere\end{align*} \end{proof}

\begin{prop}\label{n is a natural trans} For each set $X$, define \[\widetilde{\sfn}_X:\BCF^2(X)\lra\BCF(X)\] by letting $\widetilde{\sfn}_X(\mathbb{F})$ be the conical coreflection of the diagonal $\sQ$-semifilter $\sfm_X(j_X(\mathbb{F}))$. Then   $\widetilde{\sfn}= \{\widetilde{\sfn}_X\}_X$ is a natural transformation   $\BCF^2\lra\BCF$. \end{prop}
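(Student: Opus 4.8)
The plan has two parts: to show that $\widetilde{\sfn}_X$ actually lands in $\BCF(X)$, and then to verify naturality. Well-definedness is immediate from the preceding lemmas. By construction $\widetilde{\sfn}_X(\mathbb{F})$ is conical, being a conical coreflection; and it is bounded, because $\mathbb{F}$ is bounded, so Lemma \ref{bounded filter closed under diagonal} makes the diagonal $\sQ$-semifilter $\sfm_X(j_X(\mathbb{F}))$ bounded, and the conical coreflection of a bounded $\sQ$-semifilter is bounded by Lemma \ref{bounded Q-filter}. Hence $\widetilde{\sfn}_X(\mathbb{F})\in\BCF(X)$.

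For naturality, fix $f:X\lra Y$ and $\mathbb{F}\in\BCF^2(X)$, and abbreviate $g=f_B:\BCF(X)\lra\BCF(Y)$, so that $\BCF^2(f)(\mathbb{F})=\vartheta_{\BCF(Y)}(g(\mathbb{F}))$ with $g(\mathbb{F})$ the $\SFQ$-image. The goal is the identity $f_B(\widetilde{\sfn}_X(\mathbb{F}))=\widetilde{\sfn}_Y(\BCF^2(f)(\mathbb{F}))$. Both sides are conical bounded $\sQ$-semifilters on $Y$, and a conical $\sQ$-semifilter is recovered from its $\Gamma_Y$-image by $\Lambda_Y$; moreover, for a bounded $\sQ$-semifilter the value $k$ is attained only at bounded maps. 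It therefore suffices to show, for each bounded $\lam\in[0,1]^Y$, that the two sides take value $\geq k$ at $\lam$ simultaneously. The workhorse throughout is Lemma \ref{image of CBF}: it lets me replace a $\BCF$-image $h_B(\mathfrak{H})$, and in particular (taking $h=\mathrm{id}$) a coreflection $\vartheta$, by the plain $\SFQ$-image $h(\mathfrak{H})$ whenever the test map is bounded and $\mathfrak{H}$ is conical; I also use the identity $\mathfrak{c}_Z(\mathfrak{G})(\nu)\geq k\iff\mathfrak{G}(\nu)\geq k$ recorded in Section 5 together with $\sfm_Z(j_Z(\cdot))(\nu)=(\cdot)(\widehat{\nu}\circ j_Z)$.

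Unwinding the two sides at a bounded $\lam$ then runs in parallel. On the left, $\widetilde{\sfn}_X(\mathbb{F})$ is conical and $\lam$ is bounded, so Lemma \ref{image of CBF} gives $f_B(\widetilde{\sfn}_X(\mathbb{F}))(\lam)=\widetilde{\sfn}_X(\mathbb{F})(\lam\circ f)$; stripping the conical coreflection and unfolding $\sfm_X$ turns the condition ``$\geq k$'' into $\mathbb{F}(\widehat{\lam\circ f}\circ j_X)\geq k$. On the right, unfolding $\widetilde{\sfn}_Y$ and $\sfm_Y$ turns ``$\geq k$'' into $g_B(\mathbb{F})(\widehat{\lam}\circ j_Y)\geq k$; here the test map $\widehat{\lam}\circ j_Y$ is bounded (since $\lam\geq\epsilon_Y$ forces $\mathfrak{H}(\lam)\geq\epsilon$ for every $\sQ$-semifilter $\mathfrak{H}$) and $g(\mathbb{F})$ is conical (direct images of conical $\sQ$-semifilters are conical), so Lemma \ref{image of CBF} strips $\vartheta_{\BCF(Y)}$ and reduces the condition to $\mathbb{F}\big((\widehat{\lam}\circ j_Y)\circ g\big)\geq k$. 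The two conditions then coincide once I check the pointwise identity $(\widehat{\lam}\circ j_Y)\circ f_B=\widehat{\lam\circ f}\circ j_X$ on $\BCF(X)$, which is itself one more application of Lemma \ref{image of CBF} ($f_B(\mathfrak{G})(\lam)=f(\mathfrak{G})(\lam)=\mathfrak{G}(\lam\circ f)$ for conical $\mathfrak{G}$ and bounded $\lam$).

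I expect the main obstacle to be conceptual rather than computational: because $\BCF$ is \emph{not} a subfunctor of $\SFQ$, neither the image map $f_B$ nor the coreflection $\vartheta$ commutes with the ambient $\SFQ$-structure on the nose, so one cannot simply transport the naturality of $\sfm$. The resolution is to descend to the level of membership — the condition ``value $\geq k$'' tested only against bounded maps — where Lemma \ref{image of CBF} renders every coreflection invisible and the whole verification collapses to the single function identity above.
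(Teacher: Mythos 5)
Your proof is correct and takes essentially the same route as the paper's: both reduce the equality of the two conical bounded $\sQ$-semifilters to agreement of their value-$1$ conditions, unfold $f_B$, $\widetilde{\sfn}$ and $\sfm$, and use Lemma \ref{image of CBF} to make the coreflections invisible, ending with the same condition $\mathbb{F}(\mathfrak{F}\mapsto\mathfrak{F}(\lam\circ f))\geq k$ on both sides. The only cosmetic difference is that you restrict to bounded $\lam$ at the outset (using boundedness of both sides, hence Lemma \ref{bounded filter closed under diagonal} via well-definedness), whereas the paper carries ``$\lam$ is bounded'' through its chain of equivalences and cites that lemma there directly.
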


\begin{proof}  We show that for each map $f:X\lra Y$, the following diagram is commutative: \[\bfig\square<850,450>[\BCF^2(X)`\BCF(X)`\BCF^2(Y)`\BCF(Y);\widetilde{\sfn}_X` (f_B)_B` f_B` \widetilde{\sfn}_Y]\efig\]

Let $\mathbb{F}$ be a conical bounded $\sQ$-semifilter on $\BCF(X)$. Since both $f_B\circ \widetilde{\sfn}_X(\mathbb{F})$ and $\widetilde{\sfn}_Y\circ (f_B)_B(\mathbb{F})$ are conical, it suffices to check that  for every $\lam\in[0,1]^Y$, \[f_B\circ \widetilde{\sfn}_X(\mathbb{F})(\lam)=1\iff \widetilde{\sfn}_Y\circ (f_B)_B(\mathbb{F}) (\lam)=1.\]

On one hand,  \begin{align*}f_B\circ \widetilde{\sfn}_X(\mathbb{F})(\lam)=1&\iff  \lam\in\Gamma_Y(f_B\circ \widetilde{\sfn}_X(\mathbb{F})) \\
&\iff \lam~\text{is bounded and}~ f(\widetilde{\sfn}_X(\mathbb{F}))(\lam) =1 \\ &\iff \lam~\text{is bounded and}~ \widetilde{\sfn}_X(\mathbb{F}) (\lam\circ f) =1 \\ &\iff \lam~\text{is bounded and}~ \sfm_X(j_X(\mathbb{F})) (\lam\circ f) =1\\ &\iff \lam~\text{is bounded and}~ \mathbb{F}  (\widehat{\lam\circ f}\circ j_X) =1 \\ &\iff \lam~\text{is bounded and}~ \mathbb{F}  (\mathfrak{F}\mapsto \mathfrak{F}(\lam\circ f)) =1.\end{align*}

On the other hand, since $\sfm_Y(j_Y((f_B)_B(\mathbb{F})))$ is bounded (Lemma \ref{bounded filter closed under diagonal}) and   \[f_B(\mathfrak{F})(\lam)=f(\mathfrak{F})(\lam)\] for each   $\mathfrak{F}\in\BCF(X)$ and each bounded $\lam\in[0,1]^Y$ (Lemma \ref{image of CBF}), we have
\begin{align*}\widetilde{\sfn}_Y\circ (f_B)_B(\mathbb{F}) (\lam)=1 & \iff \sfm_Y(j_Y((f_B)_B(\mathbb{F}))) (\lam)=1 \\ & \iff   j_Y((f_B)_B(\mathbb{F}))  (\widehat{\lam})=1 \\ & \iff   (f_B)_B(\mathbb{F})   (\widehat{\lam}\circ j_Y)=1 \\ & \iff\widehat{\lam}\circ j_Y~\text{is bounded and}~ \mathbb{F}    (\widehat{\lam}\circ j_Y\circ f_B)=1 \\ & \iff \lam~\text{is bounded and}~ \mathbb{F}(\mathfrak{F}\mapsto f_B(\mathfrak{F})(\lam))=1 \\ & \iff \lam~\text{is bounded and}~\mathbb{F}(\mathfrak{F}\mapsto  \mathfrak{F} (\lam\circ f))=1.  \end{align*}

The proof is completed. \end{proof}

\begin{prop}\label{naturality of d and n} $\widetilde{\mathfrak{d}}=\Gamma\circ \widetilde{\sfd}$ and $\widetilde{\mathfrak{n}}=\Gamma\circ \widetilde{\sfn}\circ (\Lambda*\Lambda)$. \end{prop}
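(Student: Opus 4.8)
The plan is to treat the two equalities separately, obtaining the second by imitating the proof of Proposition \ref{natural of d and n} while keeping careful track of the two features peculiar to the bounded setting: the boundedness constraint built into $\widetilde{\mathfrak{n}}_X$, and the conical bounded coreflection $\vartheta$ hidden inside the action of $\BCF$ on morphisms.

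For the first equality I would simply unwind definitions. By construction $\widetilde{\sfd}_X(x)=\vartheta_X(\sfe_X(x))=\Lambda_X\circ\varrho_X\circ\Gamma_X(\sfe_X(x))$. Since $\Gamma_X(\sfe_X(x))=\mathfrak{d}_X(x)$ and $\varrho_X(\mathfrak{d}_X(x))=\widetilde{\mathfrak{d}}_X(x)$ by definition of the bounded coreflection, this gives $\widetilde{\sfd}_X(x)=\Lambda_X(\widetilde{\mathfrak{d}}_X(x))$. Applying $\Gamma_X$ and using that $\Gamma_X\circ\Lambda_X$ is the identity on bounded saturated prefilters (Proposition \ref{saturation}) then yields $\Gamma_X(\widetilde{\sfd}_X(x))=\widetilde{\mathfrak{d}}_X(x)$, that is, $\widetilde{\mathfrak{d}}=\Gamma\circ\widetilde{\sfd}$.

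For the second equality, fix a bounded saturated prefilter $\CF$ on $\BSF(X)$ and write $\mathbb{F}=(\Lambda*\Lambda)_X(\CF)$, a conical bounded $\sQ$-semifilter on $\BCF(X)$. For $\lam\in[0,1]^X$ I would establish the chain
\begin{align*}
\lam\in\Gamma_X\circ\widetilde{\sfn}_X(\mathbb{F})
&\iff \lam~\text{is bounded and}~\sfm_X(j_X(\mathbb{F}))(\lam)=1\\
&\iff \lam~\text{is bounded and}~\mathbb{F}(\widehat{\lam}\circ j_X)=1\\
&\iff \lam~\text{is bounded and}~\Lambda_{\BSF(X)}(\CF)(\widehat{\lam}\circ j_X\circ\Lambda_X)=1\\
&\iff \lam~\text{is bounded and}~\widehat{\lam}\circ j_X\circ\Lambda_X\in\CF.
\end{align*}
The first step uses Lemmas \ref{bounded filter closed under diagonal} and \ref{bounded Q-filter} (so that $\widetilde{\sfn}_X(\mathbb{F})$ is a bounded conical $\sQ$-semifilter, which forces boundedness of $\lam$), together with the fact that passing to the conical coreflection does not affect whether a value equals $1$; the second is the definition of the diagonal $\sQ$-semifilter; the last is the identity $\Lambda_{\BSF(X)}(\CF)(\zeta)=1\iff\zeta\in\Gamma_{\BSF(X)}\circ\Lambda_{\BSF(X)}(\CF)=\CF$, again by Proposition \ref{saturation}. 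Finally, for each $F\in\BSF(X)$ one has $(\widehat{\lam}\circ j_X\circ\Lambda_X)(F)=\Lambda_X(F)(\lam)=\bv_{\mu\in F}\sub_X(\mu,\lam)=\widetilde{\lam}(F)$, so $\widehat{\lam}\circ j_X\circ\Lambda_X=\widetilde{\lam}$ and the last line reads exactly ``$\lam$ is bounded and $\widetilde{\lam}\in\CF$'', i.e.\ $\lam\in\widetilde{\mathfrak{n}}_X(\CF)$.

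The crux, and the one genuinely new step compared with Proposition \ref{natural of d and n}, is the third equivalence, where I must unfold $(\Lambda*\Lambda)_X(\CF)=\BCF(\Lambda_X)\bigl(\Lambda_{\BSF(X)}(\CF)\bigr)$. The difficulty is that $\BCF(\Lambda_X)$ carries the conical bounded coreflection $\vartheta_{\BCF(X)}$, so a priori $(\Lambda*\Lambda)_X(\CF)(\widehat{\lam}\circ j_X)$ could differ from the naive image $\Lambda_{\BSF(X)}(\CF)\bigl((\widehat{\lam}\circ j_X)\circ\Lambda_X\bigr)$. I plan to remove the coreflection by showing that $\widehat{\lam}\circ j_X$ is a bounded function on $\BCF(X)$ whenever $\lam$ is bounded: if $\lam\geq\epsilon_X$ then $\mathfrak{G}(\lam)\geq\mathfrak{G}(\epsilon_X)\geq\epsilon$ for every $\sQ$-semifilter $\mathfrak{G}$ (by \ref{FF1} and \ref{FF3}), whence $\widehat{\lam}\circ j_X\geq\epsilon_{\BCF(X)}$. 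Lemma \ref{image of CBF} --- more precisely the fact that the conical bounded coreflection of a conical $\sQ$-semifilter agrees with it on bounded arguments --- then lets me drop $\vartheta_{\BCF(X)}$ and recover the desired identity. This is the only place the boundedness hypothesis is essential, and getting the bookkeeping of $j_X$, $\Lambda_X$ and the coreflection right is where I expect the real work to lie.
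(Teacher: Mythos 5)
Your proof is correct and takes essentially the same route as the paper's: the same chain of equivalences reducing $\lam\in\Gamma_X\circ\widetilde{\sfn}_X\circ(\Lambda*\Lambda)_X(\CF)$ to $\widetilde{\lam}\in\CF$ via the identification $\widehat{\lam}\circ j_X\circ\Lambda_X=\widetilde{\lam}$, with the first equality disposed of by unwinding definitions. Your explicit handling of the coreflection $\vartheta_{\BCF(X)}$ hidden in $\BCF(\Lambda_X)$ --- removed by showing $\widehat{\lam}\circ j_X$ is bounded and invoking Lemma \ref{image of CBF} --- together with the boundedness bookkeeping, fills in steps that the paper's terser argument leaves implicit, but this is a refinement of the same proof rather than a different approach.
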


\begin{proof}The verification is similar to that for Proposition \ref{natural of d and n}, and is included here for convenience of the reader. The first equality is obvious. As for the second, suppose that  $\CF$ is a bounded saturated prefilter on $\BSF(X)$. Let $j_X$ be the inclusion \[\BCF(X)\lra\SFQ(X).\]  Then, for each $\lam\in\sQ^X$,   \begin{align*}\lam\in\Gamma_X\circ\widetilde{\sfn}_X\circ (\Lambda*\Lambda)_X(\CF)
&\iff \sfm_X(j_X ((\Lambda*\Lambda)_X(\CF)))(\lam)=1\\
&\iff    (\Lambda*\Lambda)_X(\CF)(\widehat{\lam}\circ j_X)=1 \\ &\iff \Lambda_{\BSF(X)}(\CF)(\widehat{\lam}\circ j_X\circ\Lambda_X)=1\\ &\iff \widehat{\lam}\circ j_X\circ\Lambda_X\in\CF.  \quad~~(\CF~\text{is saturated}) \end{align*} Since for each bounded saturated prefilter $F$ on $X$, we have  \[ \widehat{\lam}\circ j_X\circ\Lambda_X(F)=\Lambda_X(F)(\lam)=\bv_{\mu\in F}\sub_X(\mu,\lam) =\widetilde{\lam}(F), \] it follows that $\widetilde{\mathfrak{n}}=\Gamma\circ \widetilde{\sfn}\circ (\Lambda*\Lambda)$. \end{proof}

\begin{thm}\label{main3} Let   $\&$ be a continuous t-norm that is not   isomorphic to  the {\L}ukasiewicz t-norm and let $\sQ=([0,1],\with,1)$. Then  the following statements are equivalent: \begin{enumerate}[label=\rm(\arabic*)]  \item The t-norm $\&$ satisfies the condition (S).
 \item  The triple $(\BCF, \widetilde{\sfn},\widetilde{\sfd})$ is a monad.
\item  The triple $(\BSF, \widetilde{\mathfrak{n}}, \widetilde{\mathfrak{d}})$ is a monad.  \end{enumerate}
     \end{thm}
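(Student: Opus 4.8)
The plan is to dispose of the formal equivalence $(2)\Leftrightarrow(3)$ first and then to establish $(1)\Leftrightarrow(2)$ along the lines of Theorems \ref{main1} and \ref{main2}. The equivalence $(2)\Leftrightarrow(3)$ is immediate: $\Gamma$ and $\Lambda$ exhibit $\BCF$ and $\BSF$ as naturally isomorphic functors, and Proposition \ref{naturality of d and n} shows that this isomorphism carries $\widetilde{\sfd}$ to $\widetilde{\mathfrak{d}}$ and $\widetilde{\sfn}$ to $\widetilde{\mathfrak{n}}$; since a natural isomorphism transports a monad structure to a monad structure, one triple is a monad exactly when the other is.

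For $(1)\Rightarrow(2)$ the decisive point is to show, under condition (S), that for every conical bounded $\sQ$-semifilter $\mathbb{F}$ on $\BCF(X)$ the diagonal $\sfm_X(j_X(\mathbb{F}))$ is already conical, so that the conical coreflection built into $\widetilde{\sfn}_X$ is inert. Writing $\sfm_X(j_X(\mathbb{F}))(\lam)=\mathbb{F}(\widehat{\lam}\circ j_X)$ and expanding $\mathbb{F}$ as a conical $\sQ$-semifilter exactly as in the sufficiency half of Proposition \ref{criterion}, this diagonal is a directed join of $\sQ$-semifilters of the form $\bigwedge_{\mathfrak{G}\in\BCF(X)}(\xi(\mathfrak{G})\ra\mathfrak{G})$. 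Each such meet is conical, because conical $\sQ$-semifilters on $[0,1]^X$ are closed under arbitrary meets (Proposition \ref{CSF is closed under meets}, valid since $[0,1]$ is continuous) and, under condition (S), under every operation $p\ra-$; the latter closure is precisely what the proof of $(1)\Rightarrow(2)$ in Theorem \ref{main1} establishes. Directed joins of conical $\sQ$-semifilters are conical by Lemma \ref{CSF is closed under directed joins}, so the diagonal is conical, and it is bounded by Lemma \ref{bounded filter closed under diagonal}; hence it lies in $\BCF(X)$. With the coreflection rendered trivial, the unit and associativity laws for $(\BCF,\widetilde{\sfn},\widetilde{\sfd})$ follow by evaluating both sides on bounded maps: there $f_B$ collapses to $f$ by Lemma \ref{image of CBF}, a conical bounded $\sQ$-semifilter is recovered from the bounded maps it sends to $1$ (being bounded it sends unbounded maps to values below $1$, and being conical it is the $\Lambda$ of its own $\Gamma$), and the identities reduce to the unconditional monad laws of $(\SFQ,\sfm,\sfe)$. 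I would carry out this verification by the same $1$-versus-$1$ comparison of values on bounded maps used in the proof of Proposition \ref{n is a natural trans}.

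For $(2)\Rightarrow(1)$ I would argue by contraposition, reusing the template of Theorem \ref{main1}. If $\&$ fails condition (S), the ordinal sum decomposition provides idempotents $0<p<q$ with $([p,q],\with,q)$ isomorphic to the {\L}ukasiewicz t-norm; choose $t,s\in(p,q)$ with $t\with s=p$. Assuming $(\BCF,\widetilde{\sfn},\widetilde{\sfd})$ were a monad, Kleisli associativity $g^\sharp\circ f^\sharp=(g^\sharp\circ f)^\sharp$ would hold, and I would violate it by a two-step computation of $g^\sharp\circ f^\sharp(\mathfrak{H})(\gamma)$ against $(g^\sharp\circ f)^\sharp(\mathfrak{H})(\gamma)$, using conical bounded $\sQ$-semifilters built from the thresholds $t,s$ and from the sets $A_n=\{1/m\mid m\geq n\}$. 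The modification forced on us here is boundedness: the data $1_{A_n}$ and the test map $\gamma(x)=p(1-x)$ from Theorem \ref{main1} are unbounded and must be replaced by bounded surrogates (joins with small positive constants), which is legitimate because the {\L}ukasiewicz block sits above the \emph{positive} idempotent $p>0$. One then reruns Steps 1 and 2, checking that the bounded coreflections still keep the two values apart.

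I expect the chief obstacle to be organisational rather than conceptual, and it stems from the warning already recorded in the text: $\BCF$ is \emph{not} a subfunctor of $\SFQ$, since the set inclusions $j_X\colon\BCF(X)\lra\SFQ(X)$ do not assemble into a natural transformation, so the clean submonad criterion of Proposition \ref{submonad} is unavailable and each monad law must be checked by hand. The device that makes this tractable is the restriction to bounded maps, on which $f_B$ agrees with $f$ (Lemma \ref{image of CBF}). The genuinely delicate points are, for $(1)\Rightarrow(2)$, to see that boundedness survives the diagonal (Lemma \ref{bounded filter closed under diagonal}) while conicity is supplied by condition (S), and, for $(2)\Rightarrow(1)$, to rebuild the separating example entirely from bounded data without collapsing the gap between the two sides of the associativity identity.
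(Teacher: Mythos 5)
Your proposal is correct and follows essentially the same route as the paper's proof: $(2)\Leftrightarrow(3)$ via Proposition \ref{naturality of d and n}, $(1)\Rightarrow(2)$ by showing that under condition (S) the diagonal $\sfm_X(j_X(\mathbb{F}))$ is already conical (and bounded, by Lemma \ref{bounded filter closed under diagonal}) so that the monad laws reduce to comparisons on bounded maps where $f_B$ agrees with $f$, and $(2)\Rightarrow(1)$ by rerunning the counterexample of Theorem \ref{main1} with bounded surrogates ($\gamma$ and the generators of $\mathfrak{G}$ joined with small positive constants). Your expansion of the conicity claim via Proposition \ref{criterion}, Proposition \ref{CSF is closed under meets} and Lemma \ref{CSF is closed under directed joins} merely makes explicit what the paper leaves as a citation to Theorem \ref{main1}.
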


Before proving the theorem, we would like to point out that  Proposition 2.9 in \cite{CVO} together with Example \ref{functional ideal II} already imply that $(\BSF, \widetilde{\mathfrak{n}}, \widetilde{\mathfrak{d}})$ is a monad when $\&$ is isomorphic to the product t-norm.

\begin{proof}
$(1)\Rightarrow(2)$   Since   $\&$ satisfies the condition (S), then for each conical bounded $\sQ$-semifilter $\mathbb{F}$ on $\BCF(X)$, the diagonal $\sQ$-semifilter $\sfm_X(j_X(\mathbb{F}))$ is conical, hence \[\widetilde{\sfn}_X(\mathbb{F})=\sfm_X(j_X(\mathbb{F})).\] In other words,   conical bounded $\sQ$-semifilters are ``closed under multiplication''. With   help of this fact and that two bounded   conical $\mathsf{Q}$-semifilters are equal if and only if they are equal on bounded elements, it is routine to check that $(\BCF,\widetilde{\sfn}, \widetilde{\sfd})$ is a monad.

$(2)\Rightarrow(1)$ In the proof of $(3)\Rightarrow(1)$  in Theorem \ref{main1}, replace $\gamma$ by \[\epsilon_X\vee p(1-x)\] for some $0<\epsilon<p$ and replace $\mathfrak{G}$ by the conical bounded $\sQ$-filter  on $X$ generated by  \[\{  1_{A_n}\vee\delta_X \mid n\geq 1, \delta>0  \} ,\] where $A_n=\{ 1/m \mid m\geq n\}$.

$(2)\Leftrightarrow(3)$ Proposition \ref{naturality of d and n}. \end{proof}
\begin{rem}
If $\&$ is a continuous t-norm that  is isomorphic to  the {\L}ukasiewicz t-norm, then  every saturated prefilter is bounded by Proposition \ref{sat is bounded}. For such a t-norm,  we simply let   $(\BSF, \widetilde{\mathfrak{n}}, \widetilde{\mathfrak{d}})=(\SPF, \mathfrak{n},\mathfrak{d})$  and  $ (\BCF, \widetilde{\sfn},\widetilde{\sfd})= (\CSFQ, \sfn,\sfd)$.  Thus agreed, Theorem \ref{main3} holds for every continuous t-norm.
\end{rem}
\section{Summary}
 Let $\sQ=([0,1],\&,1)$ with $\&$ being a continuous t-norm.
It is proved that  the following statements are equivalent: \begin{enumerate}[label=\rm(\arabic*)]  \item The  implication operator of $\&$ is continuous at each point off the diagonal.
\item The conical $\sQ$-semifilter functor  $\CSFQ$   is a submonad of $(\SFQ,\sfm,\sfe)$.
\item The conical $\sQ$-filter functor  $\CFQ$   is a submonad of $(\SFQ,\sfm,\sfe)$. \item The triple   $(\SPF, \mathfrak{n}, \mathfrak{d})$ is a monad.
\item  The triple $(\topF, \mathfrak{n},\mathfrak{d})$ is a monad.
 \item The triple  $(\BSF, \widetilde{\mathfrak{n}}, \widetilde{\mathfrak{d}})$ is a monad.
         \end{enumerate} 

The relations among these monads are summarized in the following diagram:
\[\bfig \morphism(0,0)|r|/@{->}@<2.5pt>/<0,-450>[\SFQ`\CSFQ;\mathfrak{c}]
\morphism(0,0)|l|/@{<-}@<-2.5pt>/<0,-450>[\SFQ`\CSFQ;\mathfrak{i}] \morphism(0,-450)|a|/@{->}@<0pt>/<600,0>[\CSFQ`\BCF;\vartheta]
\morphism(-600,0)|a|/@{->}@<0pt>/<600,0>[\sQ\text{-}{\sf Fil}`\SFQ;\mathfrak{i}] \morphism(-600,-450)|l|/@{->}@<2.5pt>/<0,450>[\CFQ`\sQ\text{-}{\sf Fil};\mathfrak{i}] \morphism(-600,-450)|r|/@{<-}@<-2.5pt>/<0,450>[\CFQ`\sQ\text{-}{\sf Fil};\mathfrak{c}]
 \morphism(-600,-450)|a|/@{->}@<0pt>/<600,0>[\CFQ`\CSFQ;\mathfrak{i}]
 \morphism(0,-450)|r|/@{->}@<2.5pt>/<0,-450>[\CSFQ`\SPF;\Gamma] \morphism(0,-450)|l|/@{<-}@<-2.5pt>/<0,-450>[\CSFQ`\SPF;\Lambda] \morphism(0,-900)|a|/@{->}@<0pt>/<600,0>[\SPF`\BSF;\varrho]
 \morphism(600,-450)|r|/@{->}@<2.5pt>/<0,-450>[\BCF`\BSF;\Gamma] \morphism(600,-450)|l|/@{<-}@<-2.5pt>/<0,-450>[\BCF`\BSF;\Lambda] \morphism(-600,-900)|r|/@{<-}@<-2.5pt>/<0,450>[\topF`\CFQ;\Gamma]
 \morphism(-600,-900)|l|/@{->}@<2.5pt>/<0,450>[\topF`\CFQ;\Lambda] \morphism(-600,-900)|a|/@{->}@<0pt>/<600,0>[\topF`\SPF;\mathfrak{i}]
   \efig\]
\begin{itemize}
\item each arrow is  a  monad morphism; \item each  monad in the diagram is  order-enriched; \item each $\mathfrak{i}$ is a monomorphism; \item   $\vartheta$ and $\varrho$ are   epimorphisms; \item  each $\Lambda$ is an isomorphism with  inverse given by $\Gamma$; \item   $\mathfrak{c}\circ \mathfrak{i}$ is the identity. \end{itemize}

These monads are  useful in  \emph{monoidal topology} \cite{Gahler1992,Monoidal top} and in the theory of quantale-enriched orders. As an example, since the quantale $\sQ=([0,1], \with_P,1)$  is isomorphic to Lawvere's quantale $([0,\infty]^{\rm op},+,0)$, the Kleisli monoids and Eilenberg-Moore algebras of the monad $(\BSF, \widetilde{\mathfrak{n}}, \widetilde{\mathfrak{d}})$ are essentially  approach spaces \cite{CVO} and   injective approach spaces  \cite{GH}, respectively.
As another example,  let $\sQ=([0,1], \with,1)$ with $\&$ being a continuous t-norm that satisfies the condition (S). Then,   Kleisli monoids  and Eilenberg-Moore algebras of the monad $(\SPF, \mathfrak{n},\mathfrak{d})$  are CNS spaces \cite{LZ2018} and complete and continuous $\sQ$-categories \cite{LZ2020,LiZ18b}, respectively.

\end{document}